\g@addto@macro{\endabstract}{\@setabstract}
\newcommand{\authorfootnotes}{\renewcommand\thefootnote{\@fnsymbol\c@footnote}}%
\newcommand{\ncmd}{\newcommand}
\ncmd{\htop}{h_\mathrm{top}}
\ncmd{\hpol}{h_\mathrm{pol}}
\ncmd{\E}{\mathbb{E}}
\ncmd{\Oc}{\mathbb{O}}
\ncmd{\Ha}{\mathbb{H}}
\ncmd{\R}{\mathbf{R}}
\ncmd{\C}{\mathbf{C}}
\ncmd{\Z}{\mathbf{Z}}
\ncmd{\N}{\mathbf{N}}
\ncmd{\Sph}{\mathbb{S}}
\ncmd{\T}{\mathbb{T}}
\ncmd{\D}{\mathbb{D}}
\ncmd{\Q}{\mathbf{Q}}
\ncmd{\PP}{\mathbf{P}}
\newcommand{\one}{\boldsymbol{\bar{1}}}
\newtheorem{theorem}{Theorem}
\newtheorem{lemma}{Lemma}
\newtheorem{proposition}{Proposition}
\newtheorem{conjecture}{Conjecture}
\theoremstyle{definition}
\newtheorem{definition}{Definition}
\newtheorem*{remark}{Remark}
\newtheorem*{problem}{Problem}
\newtheorem*{example}{Example}
\newtheorem*{answer}{Answer to the problem}
\ncmd{\A}{\mathcal{A}}
\ncmd{\Ftau}{\mathcal{F}^{\tau}}
\ncmd{\SAF}{\mathrm{SAF}}
\ncmd{\IETn}{\mathrm{IET}^n}
\ncmd{\IETFn}{\mathrm{IETF}^n}
\ncmd{\IETFplus}{\mathrm{IETF}^{n+1}}
\ncmd{\IETFtwo}{\mathrm{IETF}^2}
\ncmd{\IETFthree}{\mathrm{IETF}^3}
\ncmd{\IETFfour}{\mathrm{IETF}^4}
\ncmd{\IETFfive}{\mathrm{IETF}^5}
\ncmd{\FETthree}{\mathrm{FET}^3}
\ncmd{\FETn}{\mathrm{FET}^n}
\ncmd{\FETfive}{\mathrm{FET}^5}
\ncmd{\FETplus}{\mathrm{FET}^{n+1}}
\ncmd{\CETn}{\mathrm{CET}^n_{\tau}}
\ncmd{\CETthree}{\mathrm{CET}^3_{\tau}}
\ncmd{\CETthreehalf}{\mathrm{CET}^3_{\frac{1}{2}}}
\ncmd{\Rauzy}{\mathcal{R}}
\ncmd{\CEThalf}{\mathrm{CET}^{3}_{\frac{1}{2}}}
\ncmd{\CETfour}{\mathrm{CET}^4_{\tau}}
\ncmd{\re}{\mathrm{Re}}
\ncmd{\im}{\mathrm{Im}}
\ncmd{\sing}{\mathrm{sing}}
\ncmd{\reg}{\mathrm{reg}}
\ncmd{\red}{\mathrm{red}}
\ncmd{\ttop}{\mathrm{top}}
\ncmd{\bbot}{\mathrm{bot}}
\ncmd{\bs}{\backslash}
\ncmd{\ov}{\overline}
\ncmd{\noi}{\noindent}
\ncmd{\di}{\displaystyle}
\ncmd{\ra}{\rightarrow}
\ncmd{\lra}{\longrightarrow}
\newcommand{\Addresses}{{
  \bigskip
  \footnotesize

  Pascal~Hubert, \textsc{Aix Marseille Universit\'e, CNRS, Centrale Marseille, I2M - UMR 7373, F-13453 Marseille, France}\par\nopagebreak
  \textit{E-mail address}, P.~Hubert: \texttt{pascal.hubert@univ-amu.fr}

  \medskip

  Olga~Paris-Romaskevich, \textsc{Univ Rennes, CNRS, IRMAR - UMR 6625, F-35000 Rennes}\par\nopagebreak
  \textit{E-mail address}, O.~Paris-Romaskevich: \texttt{olga.romaskevich@univ-rennes1.fr, olga@pa-ro.net}
}}
\begin{document}
\title
{Triangle tiling billiards and the exceptional family of their escaping trajectories: circumcenters and Rauzy gasket}
\author{Pascal Hubert, Olga Paris-Romaskevich}

%

\maketitle
\date{}

\begin{center}
\textbf{Abstract.}
Consider a periodic tiling of a plane by equal triangles obtained from the equilateral tiling by a linear transformation. We study a following \emph{tiling billiard}: a ball follows straight segments and bounces of the boundaries of the tiles into neighbouring tiles in such a way that the coefficient of refraction is equal to $-1$. We show that almost all the trajectories of such a billiard are either closed or escape linearly, and for closed trajectories we prove that their periods belong to the set $4\N^*+2$. We also give a precise description of the exceptional family of trajectories (of zero measure) : these trajectories escape non-linearly to infinity and approach fractal-like sets. We show that this exceptional family is parametrized by the famous Rauzy gasket. This proves several conjectures stated previously on triangle tiling billiards. In this work, we also give a more precise understanding of fully flipped minimal exchange transformations on $3$ and $4$ intervals by proving that they belong to a special hypersurface. Our proofs are based on the study of Rauzy graphs for interval exchange transformations with flips.
\end{center}

\textbf{Keywords:} tiling billiards, interval exchange transformations with flips, Rauzy graphs, Rauzy induction, Rauzy gasket

\section{Introduction.}
\subsection{What are tiling billiards and how do they behave ? : background and main result.}

Take a tiling (decomposition) of a plane by the shapes (possibly of infinite volume) with piece-wise smooth boundary. Consider a following billiard in such a tiling. A particle on the plane follows a straight line till it hits the boundary of one of the tiles. Then the trajectory continues in the neighbouring tile, following the rule of negative refraction with coefficient $-1$. In other words, the oriented angle that the trajectory makes with the side of the tile, changes its sign but keeps the same absolute value. We call the dynamical system defined in this way a \emph{tiling billiard}. See for example Figure \ref{fig:Matisse} for a tiling trajectory in a tiling coming from Matisse's collage \emph{The Snail}. 

In this article we will restrict ourselves to the case where the tiles are polygons or generalized polygons (infinite volume tiles with boundaries consisting of the union of straight line segments or rays). Tiling billiards in the square tiling and the equilateral triangle tilings have been first studied in a preprint \cite{MF} by Mascarenhas and Fluegel from the point of view of physics of negative refraction of light. Unfortunately, this paper has never been published and is not accesible on-line. Although, this study seems to be quite relevant since recently discovered materials can exhibit negative indices of refraction, see \cite{SSS01, SPW04, VZZ08}.

Tiling billiards got their name and were first presented as an interesting mathematical object in \cite{DDRSL16}, where the first non-trivial case of \emph{periodic triangle tilings} was considered. These tilings by congruent triangles are obtained by cutting the plane by three families of equidistant parallel lines. In this article, we mostly concentrate on the dynamics of such negative refraction billiards in these periodic triangle tilings: \emph{triangle tilling billiards}. Our work is inspired by \cite{BDFI18}, where the connection of these billiards with interval exchange transformations with flips is pointed out. In their work, Baird-Smith, Davis, Fromm and Iyer show that the dynamics of triangle tiling billiards has a first integral: the (oriented) distance between a segment of a trajectory in each crossed triangle and its circumcenter.
In particular, if a trajectory passes through a circumcenter of one of the triangles that it crosses, it passes through the circumcenters of all the crossed triangles. 

The existence of the first integral is crucial in order to show the (very fruitful!) connection of these billiards with interval exchange transformations with flips. It also helps to prove following results about triangle tiling billiards : first, each trajectory passes through any tile at most once and second, all bounded trajectories are closed. Even more surprisingly, the authors manage to construct, as a corollary of results in \cite{LPV07}, a singular trajectory (with a branching point in some vertex of the tiling) in a triangle tiling billiard that exhibits fractal behavior and passes through \emph{all} of the triangles in the tiling. 

In this work, we give a full description of the qualitative behavior of trajectories of triangle tiling billiards. Our main result is the following


\begin{theorem}
Consider a trajectory $\delta$ of a triangle tiling billiard with the tiles congruent to the triangle $\Delta$. Suppose that $\Delta$ has the angles in its vertices equal to $\alpha, \beta$ and $\gamma$. Let $\mathcal{R}$ be the set of triangles $\Delta$ such that the point $p:=\left(1-\frac{2}{\pi}\alpha, 1-\frac{2}{\pi}\beta, 1-\frac{2}{\pi}\gamma \right) \in \R^3_+$ belongs to the Rauzy gasket, $p \in \boldsymbol{\mathcal{R}}$. 
Let $\mathcal{C}$ be the (well defined) set of trajectories that pass through the circumcenters of the crossed triangles.
Then exactly one of the following four cases holds for $(\delta, \Delta)$: 
\begin{itemize}
\item[1.] a trajectory $\delta$ is closed and stable under perturbation and $(\delta,\Delta) \notin \mathcal{C} \times \mathcal{R}$. Furthermore, the period of $\delta$ is equal to $4n+2, n \in \N^*$;
\item[2.] a trajectory $\delta$ is drift-periodic (is linearly escaping with a translation symmetry), the angles of $\Delta$ are dependent over $\Q$ (and, automatically, $\Delta \notin \mathcal{R}$)
\item[3.] a trajectory $\delta$ is linearly escaping and its symbolic dynamics can be described as a sturmian sequence and $(\delta,\Delta) \notin \mathcal{C} \times \mathcal{R}$;
\item[4.] a trajectory $\delta$ is non-linearly escaping \footnote{To be precise, the result in this form is not yet proven but we formulate it like this for simplicity of exposition. We indeed prove the necessary condition of point 4. For sufficient condition, we prove a little less than we would like to. The non-linearly escaping behaviour holds for almost all points $(\delta,\Delta) \in \mathcal{C} \times \mathcal{R}$ with respect to the natural measure on the Rauzy gasket but we strongly believe the non-lineraly escaping beahvior holds for all points in $\mathcal{C} \times \mathcal{R}$. See Proposition \ref{thm:when_escape} for the exact statement.}, $\Delta$ is acute and $(\delta,\Delta) \in \mathcal{C} \times \mathcal{R}$.
\end{itemize}
\label{thm:intro_main_theorem}
\end{theorem}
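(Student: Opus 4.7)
The plan is to reduce the geometric problem of billiard trajectories to the symbolic/combinatorial dynamics of interval exchange transformations with flips (IETFs), which was the key insight of \cite{BDFI18}. The first integral (oriented distance to the circumcenter) foliates the phase space by invariant loops, and on each such loop the first-return map to a transverse cross-section conjugates to an IETF, generically on $3$ intervals. Moreover, for trajectories in $\mathcal{C}$ the return map lives on a special locus (fully flipped IETFs on $3$, resp.\ $4$ intervals), and the map from the triangle geometry to the IETF parameters is precisely the affine chart that sends $\Delta$ to the point $p=\left(1-\tfrac{2}{\pi}\alpha, 1-\tfrac{2}{\pi}\beta, 1-\tfrac{2}{\pi}\gamma\right)$ on a standard simplex in $\R^3_+$. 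Establishing this dictionary carefully, together with the fact that the fully flipped locus is invariant under Rauzy induction, is the first step.

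Next I would construct the Rauzy graph for fully flipped $3$- and $4$-IETFs and analyze the induction orbit of $p$. Each iteration of the Rauzy map is a piecewise-projective transformation on the parameter simplex, and the classification of trajectories falls out from the asymptotic behaviour of this orbit:
\begin{itemize}
\item[(a)] If induction eventually exits the fully flipped $3$-IETF stratum into a combinatorially periodic configuration on a proper subinterval, then by the no-repeat-tile property of \cite{BDFI18} the orbit must close up, giving case~1. The period constraint $4n+2$ comes from a parity argument: each closed loop traverses a symmetric word in the $3$ letters $\{a,b,c\}$ whose length, combined with the flip structure, forces residue $2\pmod 4$.
\item[(b)] If $\alpha,\beta,\gamma$ are $\Q$-dependent, the IETF parameters are rational under suitable normalization and the induction loops, producing a drift-periodic trajectory: case~2.
\item[(c)] If induction eventually lands on a $2$-IETF (i.e.\ a circle rotation), the symbolic coding of $\delta$ is a sturmian sequence and $\delta$ escapes linearly: case~3.
\item[(d)] If the Rauzy orbit stays in the fully flipped $3$-IETF stratum forever, the renormalization cocycle forces $p\in\boldsymbol{\mathcal{R}}$, and the trajectory must lie in $\mathcal{C}$: case~4.
\end{itemize}
The mutual exclusivity of the four cases is automatic because the above alternatives for the Rauzy orbit are disjoint and exhaustive.

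The hard part will be case~4, and this is exactly what the footnote warns about. The necessary direction—$(\delta,\Delta)$ in the non-linearly escaping regime forces $p\in\boldsymbol{\mathcal{R}}$—is the clean half: infinite Rauzy induction in the fully flipped stratum is precisely the dynamical definition of the gasket. The sufficient direction requires translating an infinite sequence of Rauzy renormalizations of the IETF back into the geometric statement that the billiard trajectory is unbounded, grows sub-linearly, and accumulates on the expected fractal. This needs control of how the successive renormalized pieces scale and reglue in the plane, which is where one needs metric/ergodic estimates on the natural measure on $\boldsymbol{\mathcal{R}}$; that is why the authors obtain the statement only for almost every $(\delta,\Delta)\in\mathcal{C}\times\boldsymbol{\mathcal{R}}$, as made precise in Proposition~\ref{thm:when_escape}. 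The acuteness of $\Delta$ in case~4 should drop out of the geometry of the fully flipped $3$-IETF locus: only acute triangles correspond to parameter points lying inside the relevant face of the simplex where $\boldsymbol{\mathcal{R}}$ lives.
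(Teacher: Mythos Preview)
Your overall architecture---reduce to an IETF on the circle, run Rauzy induction, read off the trichotomy from where the induction orbit lands---matches the paper. But the central step is missing, and one of your structural claims is actually false.

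You assert that ``the fully flipped locus is invariant under Rauzy induction.'' It is not. Every map in $\CETthree$ is fully flipped by definition, but a single step of the modified Rauzy induction on the associated $4$-IETF generically unflips one interval (see the paper's Table~\ref{table:how_works} and Figure~\ref{pic:graph_for_four}: the connected component of the starting permutation has $19$ vertices, most with unflipped letters). So there is no ``fully flipped $3$-IETF stratum'' in which the orbit can ``stay forever,'' and your mechanism for case~(d) does not exist as stated.

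What the paper actually proves is that the \emph{hyperplane} $\tau=\tfrac12$ (equivalently $\lambda_A+\lambda_C=\lambda_B+\lambda_D$) is invariant under the Rauzy cocycle in a subtle sense: there is a vector $v^\perp=(1,-1,1,-1)$ whose image $A_\gamma^T v^\perp$ depends only on the endpoint of the Rauzy path $\gamma$, not on the path itself (Lemma~\ref{lemma:main_vector_preserved_lemma}). This lemma is computer-verified on the $19$-vertex quotient graph; the authors explicitly say they do not have a conceptual proof. From it one deduces that if the induction never stops then $\lambda\perp v^\perp$, i.e.\ $\tau=\tfrac12$, i.e.\ $\delta\in\mathcal{C}$ (Theorem~\ref{thm:first_ingredient}). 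Only \emph{after} this does the Rauzy gasket enter: on the slice $\tau=\tfrac12$ one computes that $F^2$ is precisely an Arnoux--Rauzy map with parameters $x_j=1-2l_j$ (Proposition~\ref{prop:the_squares_are_Rauzy}), and then cites \cite{AR91} for the fact that Arnoux--Rauzy maps are minimal exactly on $\boldsymbol{\mathcal{R}}$. Your proposal collapses these two distinct steps---``minimal $\Rightarrow\tau=\tfrac12$'' and ``$\tau=\tfrac12$ and minimal $\Rightarrow p\in\boldsymbol{\mathcal{R}}$''---into a single unjustified sentence. The first of these is the main new content of the paper and is not something the renormalization cocycle gives you for free.

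Two smaller points. First, $p$ is not the IETF parameter vector; the IETF is parametrized by $(l_1,l_2,l_3,\tau)$ with $l_j=\alpha_j/\pi$, and $p=(1-2l_1,1-2l_2,1-2l_3)$ only appears after squaring on the $\tau=\tfrac12$ slice. Second, the $4n+2$ period is not obtained by a word-symmetry argument but by showing (via the integrability analysis of Section~\ref{sec:integrability_section}) that every periodic interval on the Poincar\'e--Rauzy section is \emph{flipped}, hence the first-return power is odd; your sketch does not explain why non-flipped periodic intervals are excluded, which is again a Rauzy-graph argument (Proposition~\ref{prop:integrable_3}).
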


This result gives a positive answer to  Conjectures 4.19  and 5.1 in \cite{BDFI18}, about the behaviour of exceptional trajectories as well as about the periods of all closed periodic trajectories. 

As a corollary we get that \emph{almost any triangle tiling trajectory is either closed or linearly escaping}. This property is related to the notion of \emph{integrability} of an interval exchange transformation with flips that we define and study throughout this article. We borrowed the name from the terminology for the studies on Novikov's problem on the semiclassical motion of an electron \cite{N82, Z84, D97}, since we think that these problems are related and hope to study them in future work.  

The proof of our main theorem uses in a crucial way a powerful tool of the modified Rauzy induction for interval exchange transformations with flips. Such a modification of the Rauzy induction was first introduced by Nogueira in \cite{N89}. He proved that almost any interval exchange transformation with flips has a periodic sub-interval :  Rauzy induction amost always stops. We implicitely use this result all along our work: triangle tiling billiards have abundant and stable closed trajectories exactly thanks to the phenomenon noticed by Nogueira. Nogueira's theorem shows how different interval exchange transformations with flips are from classic interval exchange transformations that do preserve orientation : the first are almost never minimal, the second are almost always minimal\footnote{Except for the cases when they obviously are not ! See classical Keane's theorem \cite{K75}}.

The proof of Theorem \ref{thm:intro_main_theorem} proceeds in two major steps. First, we prove that there exists an invariant hyperspace for the Rauzy induction procedure that corresponds exactly to the space of trajectories hitting the circumcenters. This step uses as a key result Lemma \ref{lemma:main_vector_preserved_lemma} which we have proven by some (not very heavy but still...) computer assisted calculations of Rauzy graphs for interval exchange transformations with flips. For now, Lemma \ref{lemma:main_vector_preserved_lemma} seems quite miraculous and one of the major goals of our future work is to understand reasons behind its claim. The second step of the proof of Theorem \ref{thm:intro_main_theorem} is based on a more precise understanding of the structure of permutations corresponding to the stopping points of Rauzy induction. 

This article also contains links to the Rauzy graphs that were drawn by the program written by Paul Mercat in order to conclude the proof of Lemma \ref{lemma:main_vector_preserved_lemma} as well as to illustrate some of the arguments. We think that the study of the Rauzy graphs for interval exchange transformations with flips is a very interesting area for future research. As already mentionned above, these graphs are very different from Rauzy graphs for classical interval exchange transformations. The connections between these two worlds may be interesting to explore. We speak about this more in the last Section \ref{sec:perspectives}, as well as about other perspectives and open questions related to tiling billiards and interval exchange transformations with flips. 

To conclude the introduction, we would like to say that the area of tiling biliards is a very young and small (for now...) niche of dynamics of billiards. We find it very attractive.
As far as we know, there are only a very few works on this subject. In addition to the works already mentionned, we are aware of the existence of two more works. First, in \cite{DH18} Davis and Hooper study tiling billiards on the trihexagonal tiling and their ergodic properties. Second, in a slightly larger setting, Glendinning in \cite{G16} studies the dynamics of tiling billiards on the standard infinite checkerboard. He supposes that the black and white tiles have different refraction coefficient indices, $k_1$ and $k_2$, and relates the dynamics to interval exchange transformations in the case when $\frac{k_1}{k_2}>\sqrt{2}$. 

As far as we know, the first published (in $2016$) works on tiling billiards are \cite{DDRSL16} and \cite{G16}. Although, one could say that the story of tiling billiards starts almost thirty years earlier, in $1989$ with the work of Nogueira \cite{N89} on interval exchange transformations with flips. Indeed, on the last page of his work, Nogueira defines \emph{the billiards with flips in polygons}. One can see that the study of such a billiard in a square is equivalent to the study of the square periodic tiling billiard, and a study of such a billiard in a triangle is equivalent to the study of triangle tiling billiards. In this work this connection will be made explicit. \footnote{For the explanation of this connection, see the triangletangent system in paragraph \ref{subs:definitions}} Although, the system of Nogueira can't be generalized to the tilings with different tiles, for example.

\begin{figure}
\includegraphics*[scale=0.17]{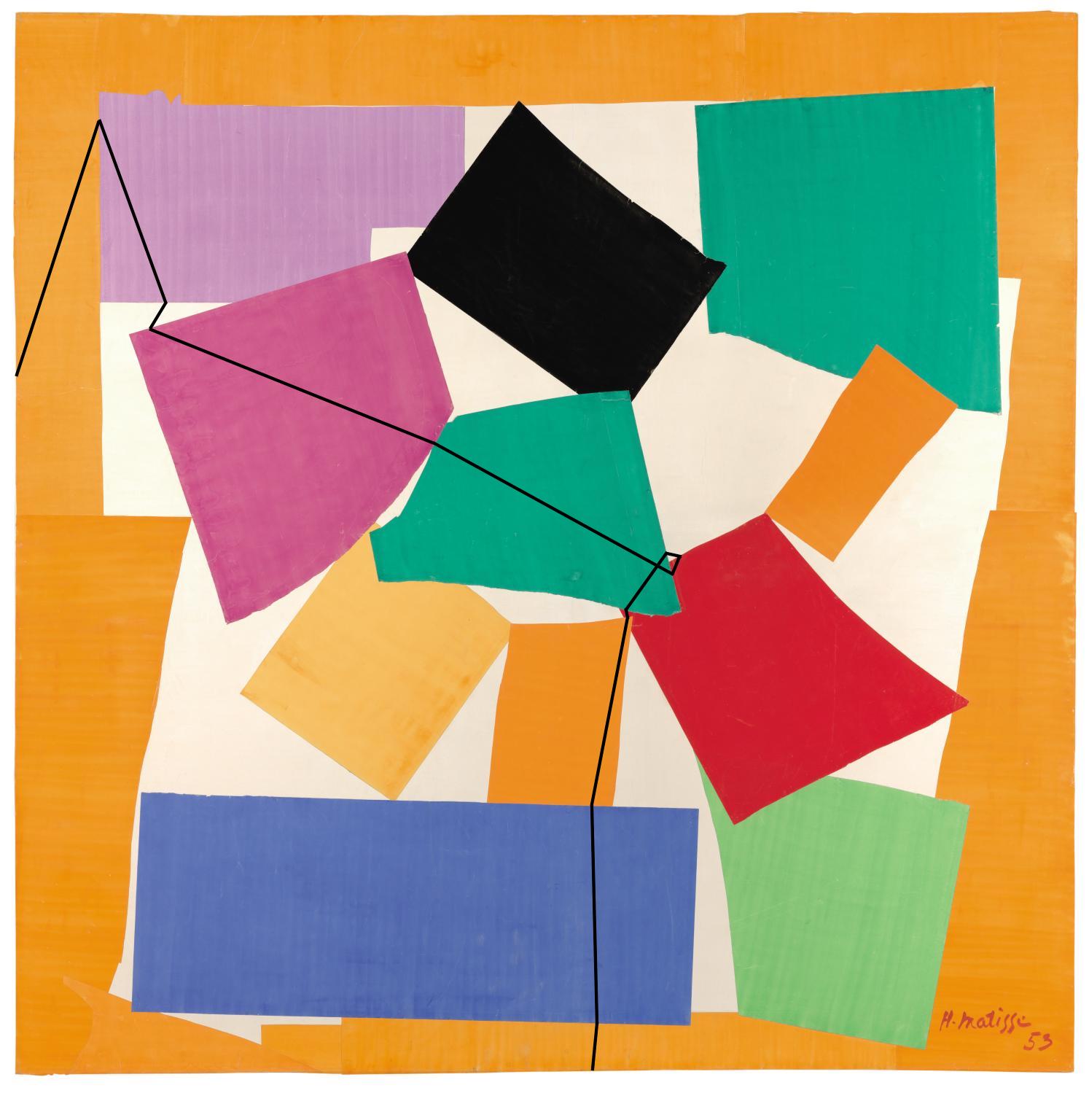}
\centering
\caption[]{A part of a tiling billiard trajectory in the tiling defined by one of Matisse's collages. 
Initital painting (except for a black tiling billiard trajectory): H. Matisse, \emph{L'Escargot (Snail)}, 1953,  Gouache on paper, cut and pasted on paper mounted on canvas, Tate Museum, Succession Henri Matisse/DACS 2018}\label{fig:Matisse}
\end{figure}

\subsection{Plan of the paper.}
Different dynamical systems equivalent to the triangle tiling billiard are defined in Section \ref{sec:definition}. One of them  is a family of  interval exchange transformations with flips (fully flipped  $3$-interval exchange transformations on $\Sph^1$).  Qualitative behavior of orbits in triangle tiling billiards are discussed in Section \ref{subs:different}.  A modified Rauzy induction for interval exchange transformations with flips is precisely defined in Section \ref{sec: modified Rauzy}. This is a crucial tool in this work. Rauzy graphs are also introduced and the work or Nogueira is revisited. Section \ref{sec:one-half} gives a necessary condition for minimality. The proof uses the modified Rauzy induction. Section \ref{sec:integrability_section} explores integrability for interval exchange transformations with flips. In Section \ref{sec:properties_tiling_billiards}, properties of the orbits of triangle tiling billiards are derived from tools and ideas introduced in the previous sections: symbolic dynamics, periodic orbits, generic and exotic dynamics are considered. Further remarks and open questions are mentioned in Section \ref{sec:perspectives}.

We highlight that the statement of Theorem \ref{thm:intro_main_theorem} is a union of statements of Propositions \ref{thm:generic}, \ref{thm:period} and \ref{thm:when_escape}. These propositions will be proved separately in the paper.

\section{Different approaches of triangle tiling billiards.}\label{sec:definition}
\subsection{Four seemingly different dynamical systems.}\label{subs:definitions}

Take some triangle $\Delta$. Suppose that this triangle $\Delta$ has its angles equal to $\alpha, \beta$ and $\gamma$. From now on till the end of the article the corresponding vertices are denoted by $A,B$ and $C$ and the sides facing these vertices - by $a,b$ and $c$. This definition was given in the introduction but we repeat it here for completeness.

\begin{definition}[\textbf{\emph{Triangle tiling billiards}}]\label{def:triangle_tiling_billiards_1}
Consider a periodic tiling of the plane by triangular tiles which are all congruent to $\Delta$ which is obtained by cutting the plane by three families of equidistant parallel lines. A \emph{triangle tilling billiard} is a dynamical system of a motion of a point particle in such a tiling defined in a following way. A particle follows a straight line till it hits the side of some tile. The trajectory continues in the neighbouring tile, following the rule of negative refraction with coefficient $-1$, see Figure \ref{fig:triangletiling}. 
\end{definition}

Throughout this article we will be interested in studying the dynamics of triangle tiling billiards for different triangles $\Delta$ and different initial conditions of the trajectory. Note that the tiles can be rescaled in such a way that $\Delta$ has area $1$ - the dynamics is invariant under homothety. The parameters of the dynamics are hence the angles $\alpha, \beta, \gamma$. We denote the sides of $\Delta$, corresponding to these angles, as $a,b$ and $c$. 

\smallskip

Now, let us give three more definitions of other, seemingly unrelated, dynamical systems. Then we will clarify their connection to the triangle tiling billiards untill the end of this Section.

\begin{figure}
\centering
\includegraphics*[scale=0.4]{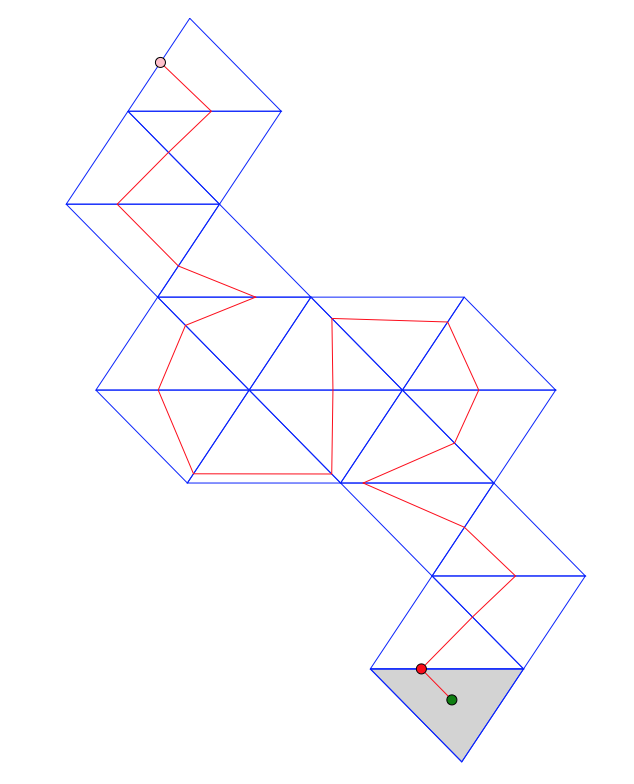}
\caption[]{
The first $20$ segments of a piecewise linear trajectory in a negative triangle tiling billiard. The refraction coefficient $k$ is equal to $-1$.}\label{fig:triangletiling}
\end{figure}

%

\begin{definition}[\textbf{\emph{Reflection in a circumcircle.}}]\label{def:reflection_in_a_circumcircle:2}
Consider a circle on the complex plane centered at the origin. Fix a triangle $\Delta$ that can be inscribed in this circle. For any $\tau \in (0, 1)$ define an oriented chord $l$ in the circle that connects the point with argument $2 \pi \tau$ with the point of argument $0$ and is headed to the later. Let us now define a following dynamical system on some subset of this circle. For any $X \in [0,1)$, inscribe a triangle $\Delta_X$ congruent to $\Delta$ in such a way that the vertices $A, B$ and $C$ are placed on the circle in a counter-clockwise manner and that the argument of the vertex $A$ as of a complex number is equal to $2 \pi X$.

The dynamical system will be defined for a subset of such $X \in [0,1)$ such that the chord $l$ intersects the corresponding triangle $\Delta_X$. Take the last (following the orienation of the chord $l$) side $s_X$ of triangle $\Delta_X$ that the chord $l$ intersects. Define $\bar{\Delta}_X$ as a triangle congruent to $\Delta$, inscribed in the same circle, sharing the side $s_X$ with $\Delta_X$ and having an opposite orientation. Now define $\Delta'_X$ as a triangle obtained by reflecting $\bar{\Delta}_X$ with respect to the diameter of the circle perpendicular to the chord $l$. The orientation of $\Delta'_X$ is the same as that of the initial triangle $\Delta_X$. The map $F_{\Delta, l}: X \mapsto X', X \in \Sph^1$ is a \emph{reflection in a circumcircle}, see Figure \ref{fig:circumreflection}. 
\end{definition}

Let us make a couple of important remarks. First, for the map $\bar{F}_{\Delta, l}: X \mapsto \bar{X}$ defined analogically, we see that its square is equal to that of the map $F_{\Delta, l}$ of the reflection in a circumcircle: $\bar{F}^2=F^2$. Second, the reflection in a circumcircle is not necessarily defined on the full circle $\Sph^1$. For example,for the obtuse triangle $\Delta$, the map $F_{\Delta, l}$ is never defined on the full circle for any $\tau \in [0,1)$. On the contrary, for $\Delta$ acute and $\tau=\frac{1}{2}$, the map $F_{\Delta, l}$ is defined on the full circle. In this case $l$ is a diameter.

%
%

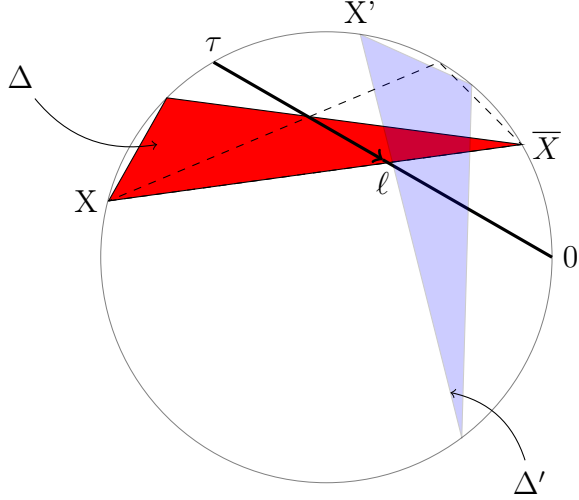
\begin{figure}

\centering
\begin{tikzpicture}[scale = 1.5]

\draw[gray,very thin](0,0)circle (2);

\draw[fill =  red] (-1.93, 0.5)--(-1.414, 1.414)--(1.73,1)--cycle;

\draw[dashed] (-1.93, 0.5)--(1.73,1)--(1,1.73)--cycle;

\draw[->, very thick] (-1,1.73)--(0.5,0.865) node[below]{$\ell$};
\draw[ very thick] (0.5,0.865)--(2,0);

\draw (-1,1.73) node[above]{$\tau$};

\draw (-1.93, 0.5) node[left]{X};

\draw (1.73,1) node[right]{$\overline{X}$};


\draw[ fill=blue, opacity=0.2](1.2, -1.6)--(1.288,1.53)--(0.3,1.977)--cycle;

\draw (0.3,1.977) node[above]{X'};

\draw (2,0) node[right]{0};

\path[->] (-2.5,1.6) node[left]{$\Delta$} edge [bend right] (-1.5,1);
\path[->] (1.8,-2) node{$\Delta'$} edge [bend right] (1.1,-1.2);

\end{tikzpicture}   

\caption[]{\emph{Reflection in a circumcircle} $F_{\Delta, l}$: the initial triangle $\Delta$ is mapped to the triangle $\Delta'$ of the same orientation. The intermediary step of the process passes by the dotted triangle $\bar{\Delta}_X.$ The oriented chord $l$ intersects (the last time) the triangle $\Delta_X$ in the side $s_X$, and the triangle $\Delta'_X$ in the side with another label, $s_{X'}$. } \label{fig:circumreflection}
\end{figure}

\begin{definition}[\textbf{\emph{Triangletangent system}}]\label{def:triangletangent_system_3}
Fix a triangle $\Delta$ inscribed in its circumcircle, and fix a number $\tau \in (0,1)$ for a parameter. Consider a smaller circle $\T$ homothetic to the initial circumcircle, with a coefficient of homothety equal to $\left| \cos \pi \tau\right|$. The \emph{triangletangent system} is a map defined on the (one-dimensional) space of oriented segments connecting two sides of the triangle $\Delta$ and tangent to $\T$. These segments are parametrized by a subset of points on the sides of the triangle $\Delta$ which correspond to their end points. Then for any segment $X$ one associates a segment with a point $X'$ on the same side of the triangle but symmetrical with respect to the middle of the side. The map $F_{\Delta, \T}: X \mapsto X'$ defined in such a way on the space of oriented tangent segments is a \emph{reflection in a triangle with respect to a circle of tangency}, see Figure \ref{fig:triangletangent}.
\end{definition}

The map $F_{\Delta, \T}$ here is well-defined. Indeed, if there exists a segment tangent to $\T$ with an end-point in $X$, then the segment tangent to $\T$ with an end-point in a symmetrical point $X'$ exists as well. This folows from the fact that the circle $\T$ has its center in the circumcenter of $\Delta$, and the circumcenter is placed on the intersection of line segment bisectors. Note that as in the case of the reflection in a circumcircle, the system $F_{\Delta, \T}$ is not necessarily defined for any point $X$ on the sides of the triangle. For $\Delta$ acute, $F_{\Delta, \T}$ is defined everywhere if $\tau$ is small enough - smaller than any of the distances between the circumcenter to the sides of the triangle $\Delta$.

This system in this form was suggested to us by Shigeki Akiyama. This system is also a restriction on some subset of a billiard with flips that was defined by Nogueira for \emph{any} polygon (and not only a triangle) in \cite{N89}. Although in his approach, Nogueira didn't mention the first integral that appears in the case of triangle billiard (in this definition, this invariant is represented by the circle $\T$). The triangletangent system is exactly the restriction of Nogueira's billiard in a triangle to the subset of trajectories with the same value of the first integral.

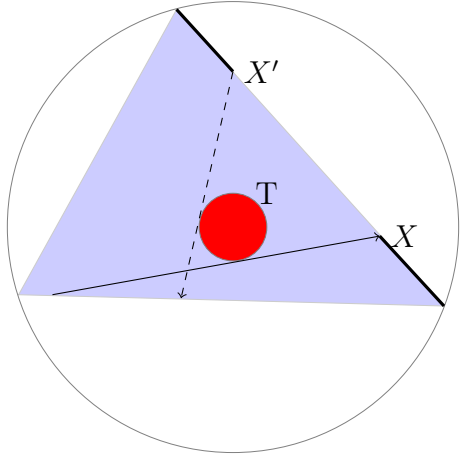
\begin{figure}
\centering
\begin{tikzpicture}[scale = 1.5]

\draw[gray,very thin](0,0)circle (2);

\draw[fill=blue, opacity=0.2] (-1.9,-0.6)--(1.87, -0.7)--(-0.5,1.93)--cycle;

\draw[gray,very thin, fill = red](0,0)circle (0.3);
\draw (0.3, 0.3) node{T};

\draw[<-] (1.3, -0.08) node[above, right]{$X$} --(-1.6, -0.6) ;

\draw[dashed, ->] (0, 1.38) node[above, right]{$X'$} --(-0.46, -0.63);

\draw[very thick] (0, 1.38)  --(-0.5,1.93);

\draw[very thick] (1.3, -0.08)   --(1.87, -0.7);

\end{tikzpicture}   
\caption[]{\emph{Triangletangent system} $F_{\Delta, \T}$: the oriented segment with its end-point $X$ is mapped to the oriented (dotted) segment with its end-point $X'$.}\label{fig:triangletangent}
\end{figure}

\begin{definition}[\textbf{\emph{Fully flipped  }}$3$\emph{\textbf{-interval exchange transformations on }}$\Sph^1$.]\label{def:fully_flpped_3_interval_exchange_transformations_on_the_circle:4}
Fix the numbers $\tau \in [0,1)$ and $l_j \in \R_+, j=1,2,3$ such that $\sum_{j=1}^3 l_j=1$. Define a map $F: \Sph^1 \rightarrow \Sph^1$ by a following explicit formula : 
\begin{equation*}
F_{\tau, l_1, l_2, l_3}(x):=
  \begin{cases}
 -x+l_1+\tau \; \; \; \; \mathrm{mod} \; 1 & \; \; \textit{if}  \; \; \; x \in I_a:= [0,l_1) \\
      -x+l_2 +\tau   \; \; \; \; \mathrm{mod}\;  1  & \; \; \textit{if} \; \; \; x \in I_b: = [l_1, l_1+l_2)\\
      -x+l_3 +\tau  \; \; \; \; \mathrm{mod}\;  1 & \; \; \textit{if} \; \; \; x \in I_c: =[l_1+l_2,1)
    \end{cases}.
\end{equation*}
We call such a transformation\emph{a fully flipped }$3$-\emph{interval exchange transformation on the circle with a trivial permutation of intervals.} We denote the set of all such transformations by $\CETthree$.
\end{definition}

The letter C in the name  $\CETthree$ corresponds to the word \emph{circle}, the letters E and T - to (interval) \emph{exchange transformation}, and the number $3$ to the number of continuity intervals on the circle, $\tau$ being a parameter. A map $F_{\tau, l_1, l_2, l_3} \in \CETthree$ is a continuous transformation of the circle outside a three-point set $\{l_1, l_1+l_2,1\} \in \Sph^1$. Each one of the intervals of continuity is shifted by $\tau$ and then \emph{flipped}. See Figure \ref{fig:fullyflipped} for the illustration. 

Let us make a couple of remarks: first, for $F \in \CETthree$ its square $T=F^2$ is a standard interval exchange transformation (without flips). For almost all the values of parameters, the map $T$ is a $6$-interval exchange transformation on the circle $\Sph^1$. Second, the space $\CETthree$ is parametrized by a three-dimensional space which is a direct product of the simplex of lengths and the circle of parameter $\tau$.

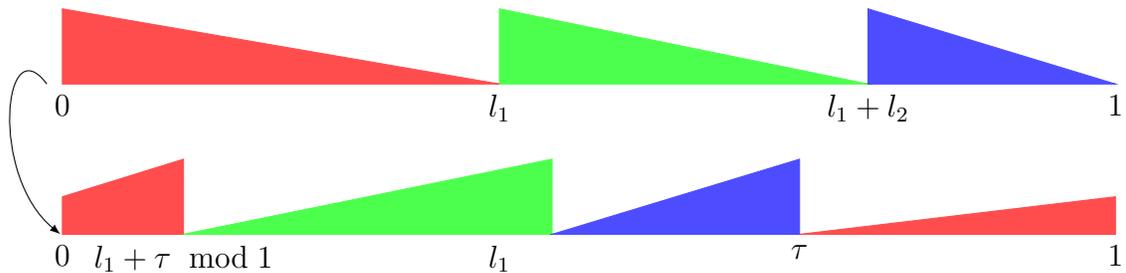
\begin{figure}
\centering
\begin{tikzpicture}[xscale=14]
\path[draw, fill=red,red, opacity=0.7] (0,0)--(0,1)--(0.415,0)--cycle;
\path[draw, fill=green, green, opacity=0.7] (.415,0)--(.415,1)--(0.765,0)--cycle;
\path[draw, fill=blue, blue, opacity=0.7] (.765,0)--(1,0)--(.765,1)--cycle;
\path[draw] (0.765,0) node[below]{$l_1+l_2$} --cycle;
\path[draw] (0,0) node[below] (1){$0$} --cycle;
\path[draw] (1,0) node[below]{$1$} --cycle;
\path[draw] (.415,0) node[below]{$l_1$} --cycle;

 \tikzset{
        arrow/.style={
            color=black,
            draw=black,
            -latex,
                font=\fontsize{12}{12}\selectfont},
        }

\path[draw] (0,-2) node[below] (2){$0$} --cycle;
\path[draw] (1,-2) node[below]{$1$} --cycle;
\path[draw, fill=red,red, opacity=0.7] (0,-2)--(0,0.5-2)--(.115,1-2)--(.115,0-2)--cycle;
\path[draw, fill=green, green, opacity=0.7] (.115,0-2)--(.465,1-2)--(0.465,0-2)--cycle;
\path[draw, fill=blue, blue, opacity=0.7] (.465,0-2)--(.7,1-2)--(.7,0-2)--cycle;
\path[draw] (0.7,0-2) node[below]{$\tau$} --cycle;
\path[draw, fill=red,red, opacity=0.7] (0.7,-2)--(1,-2)--(1,0.5-2)--cycle;
\path[draw] (.115,0-2) node[below]{$l_1+\tau \;\; \mathrm{mod} \; 1$} --cycle;
\path[draw] (.415,0-2) node[below]{$l_1$} --cycle;
\draw[arrow](1) to [out=94,in=95]  (2);
\end{tikzpicture}
\caption[]{\emph{Fully flipped }$3$\emph{-IET.} Along this paper we draw the geometric shapes (triangles and quadrilaterals, for most of the time) over the intervals of continuity of interval exchange transformations. This is done in order to simplify the understanding of the action of the maps by making it more visual: the orientation of the form drawn over an interval changes from the interval to its image if the interval is flipped by an interval exchange transformation. Even though the idea of drawing geometric shapes over the intervals exchanged by an interval exchange transformation is very old, a nice idea to flip the geometric forms above the flipped intervals comes from \cite{BDFI18} (see Figure 9 there).}\label{fig:fullyflipped}
\end{figure}

\subsection{The same system and its four faces: different tools for understanding.}\label{subs:connections}
Here are the four dynamical systems defined in a previous paragraph: 
\begin{itemize}
\item[1.] Triangle tiling billiards guided by the negative refraction in the tiling with tiles congruent to $\Delta$ (Definition \ref{def:triangle_tiling_billiards_1});
\item[2.] reflection $F_{\Delta, l}$ of a triangle in its circumcircle by following an oriented fixed chord (Definition \ref{def:reflection_in_a_circumcircle:2});
\item[3.] triangletangent system $F_{\Delta, \T}$ (Definition \ref{def:triangletangent_system_3});
\item[4.] a family of maps in $\CETthree$ (Definition \ref{def:fully_flpped_3_interval_exchange_transformations_on_the_circle:4}).
\end{itemize}
What do these systems have in common? All of the systems use a triangle $\Delta$ as a parameter: for all of them except for the system\ref{def:fully_flpped_3_interval_exchange_transformations_on_the_circle:4} it is explicit, and for this one the lengths of the intervals of continuity $l_j, j=1,2,3$ can be reparametrized to correspond to the angles of some triangle $\Delta$. The systems \ref{def:reflection_in_a_circumcircle:2}, \ref{def:triangletangent_system_3} and \ref{def:fully_flpped_3_interval_exchange_transformations_on_the_circle:4}) are one-dimensional systems (they are all defined on the circle or on its subsets), and all of them have a parameter $\tau$ in them (defining either the chord $l$, the circle $\T$ or the shift $\tau$ in the action of a fully flipped IET). A triangle billiard \ref{def:triangle_tiling_billiards_1} is a $2$-dimensional system. 

\smallskip

A simple and crucial remark ("\emph{folding observation}") from \cite{BDFI18} helps to reduce the triangle tiling billiards to a $1$-dimensional system by finding the first integral $d(\delta)$ for its trajectories. Indeed, for any trajectory $\delta$ of a triangle tiling billiard and two consecutive triangles $\Delta, \Delta'$ that it crosses, these triangles can be folded one onto another (along the crossed edge). In this way, the segments of the trajectory fold onto \emph{one line }and the triangles fold in such a way that their images have a \emph{common circumcircle}. This permits to prove

\begin{proposition}[\cite{BDFI18}]\label{prop:Dianacomeback}
In a triangle tiling billiard the following holds:
\begin{itemize}
\item[1.] Every trajectory crosses each triangle in the tiling at most once;
\item[2.] the distance $d(\delta, \Delta)$ between a segment of a trajectory in any triangle $\Delta$ crossed by it and the circumcenter of $\Delta$ is an invariant of the trajectory $d(\delta)$ (doesn't depend on $\Delta$). Moreover, the circumcenter of each crossed by the trajectory $\delta$ triangle $\Delta$ stays on the same side from the (oriented) segment of $\delta$;
\item[3.] all bounded trajectories are closed.
\end{itemize}
\end{proposition}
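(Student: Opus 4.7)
The plan is to exploit the ``folding observation'' used in \cite{BDFI18}. Two adjacent tiles $\Delta_n$ and $\Delta_{n+1}$ sharing an edge $e$ are mirror images of one another across $e$, since the tiling comes from three families of equidistant parallel lines and each edge is a local axis of symmetry. The refraction rule with coefficient $-1$ states that the trajectory segments in $\Delta_n$ and in $\Delta_{n+1}$ make opposite oriented angles with $e$. Consequently, reflecting $\Delta_{n+1}$ across $e$ simultaneously superposes it onto $\Delta_n$ and straightens the kink of the trajectory at $e$ into one collinear piece.

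Iterating this reflection along the trajectory, I would define the unfolding isometry $U_n:\Delta_n\to\Delta_1$ as the composition of the reflections across the successive shared edges $e_1,\dots,e_{n-1}$. Every $U_n(\Delta_n)$ coincides with $\Delta_1$, and the union of the $U_n$-images of the consecutive trajectory segments forms one oriented straight line $L$ inside $\Delta_1$. Since any isometry between triangles carries circumcenter to circumcenter, $U_n$ sends the circumcenter of $\Delta_n$ to that of $\Delta_1$. Hence the signed distance from the $n$th segment to the circumcenter of $\Delta_n$ equals the signed distance from $L$ to the fixed circumcenter of $\Delta_1$, which is manifestly independent of $n$; this gives point 2, together with the assertion that the circumcenter always lies on the same side of the oriented trajectory.

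Point 1 follows directly. If some tile $\Delta_0$ were crossed twice, the two corresponding segments inside $\Delta_0$ would lie at the same signed distance from the circumcenter of $\Delta_0$, hence on the same oriented line. A line meets a triangle in a unique chord, so the two segments must coincide as oriented chords, and determinism of the refraction dynamics then forces the trajectory to be periodic, the two ``visits'' being the same visit in different periods. Point 3 is then a finiteness consequence: a bounded trajectory intersects only finitely many tiles of the tiling and cannot terminate at a non-vertex point, so by point 1 it must eventually close up into a periodic orbit.

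The step I expect to require the most care is tracking orientations carefully through the edge-reflections so that $d(\delta,\Delta)$ is genuinely a \emph{signed} invariant with a consistent ``side'' on which the circumcenter lies, and so that two revisits to the same tile produce truly identical oriented chords rather than chords going in opposite directions. Once this bookkeeping is handled cleanly, all three claims flow from the unfolding picture.
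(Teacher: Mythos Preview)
Your approach via the folding/unfolding observation is exactly the one the paper invokes (it attributes the result to \cite{BDFI18} and only sketches the folding idea without a detailed proof). Points~2 and~3 are handled correctly.

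There is, however, a real gap in your argument for point~1. The inference ``same signed distance from the circumcenter of $\Delta_0$, hence on the same oriented line'' is false: all oriented tangent lines to the circle of radius $|d(\delta)|$ about the circumcenter, oriented with the center on the prescribed side, share that signed distance yet are distinct. So signed distance alone does not pin down the chord, and the difficulty is not merely the ``same chord in opposite directions'' issue you anticipate in your last paragraph --- a priori the two chords could point in unrelated directions. The fix stays entirely within your own setup. If the trajectory visits $\Delta_0$ at steps $m$ and $n$, you have two isometries $U_m,U_n:\Delta_0\to\Delta_1$, each a composition of reflections in shared edges. In this tiling adjacent triangles are literal mirror images across their common edge, so each such reflection preserves the vertex labels $A,B,C$; hence $U_m$ and $U_n$ are both \emph{label-preserving} isometries $\Delta_0\to\Delta_1$, and there is exactly one such map. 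Therefore $U_m=U_n$, and the two chords $U_m^{-1}(L\cap\Delta_1)$ and $U_n^{-1}(L\cap\Delta_1)$ coincide as oriented segments. Once you replace the signed-distance shortcut with this uniqueness of the folding isometry, your proof goes through.
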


\begin{figure}
\centering
\includegraphics*[scale=0.18]{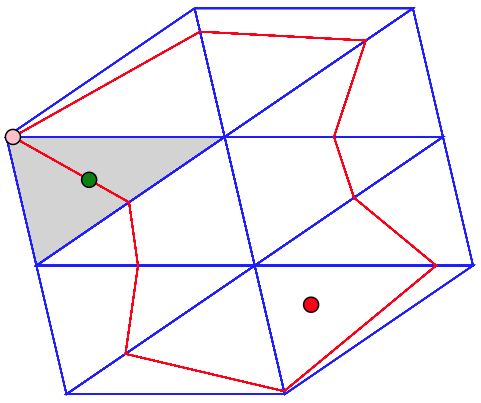}
\includegraphics*[scale=0.3]{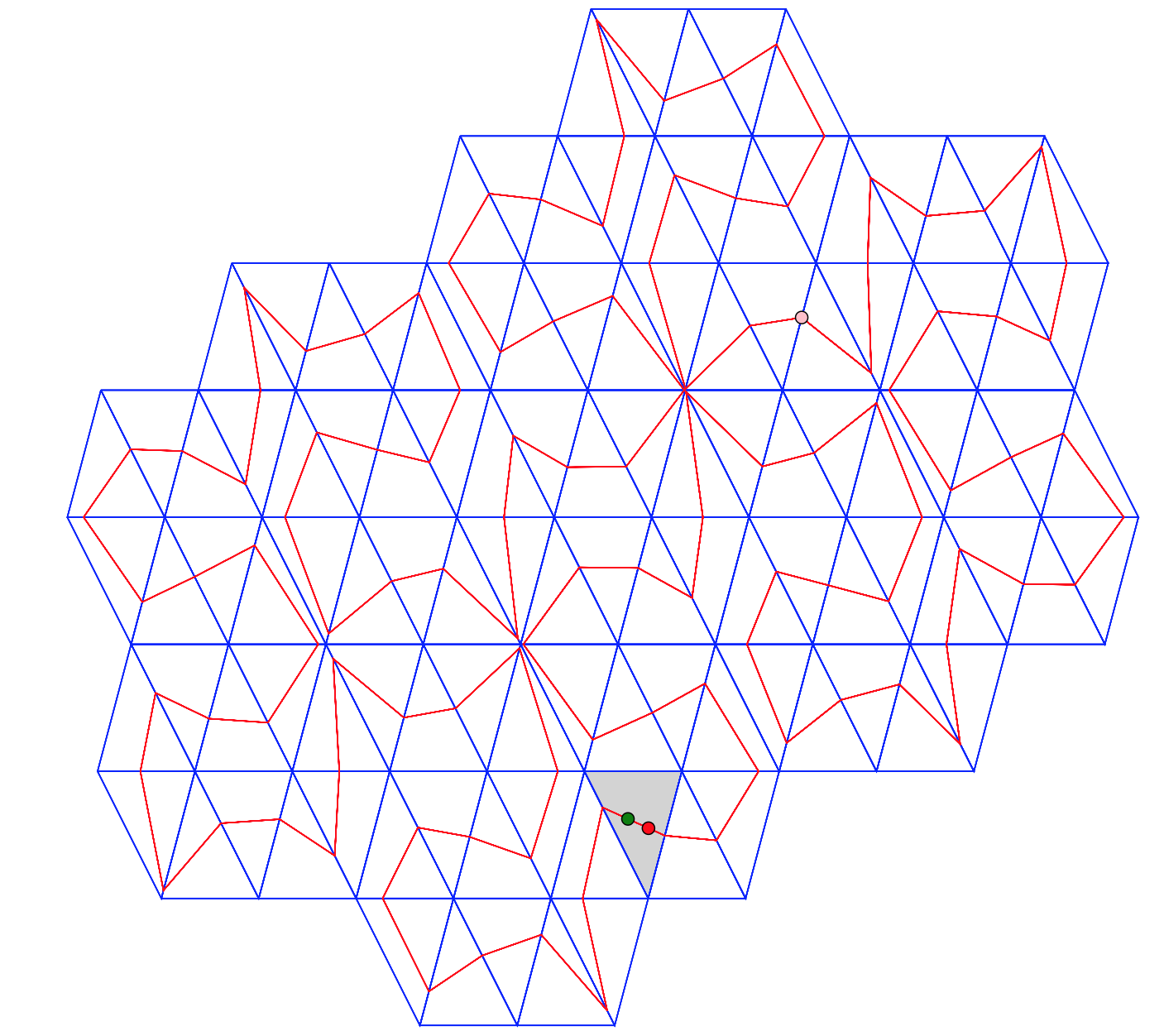}
\includegraphics*[scale=0.3]{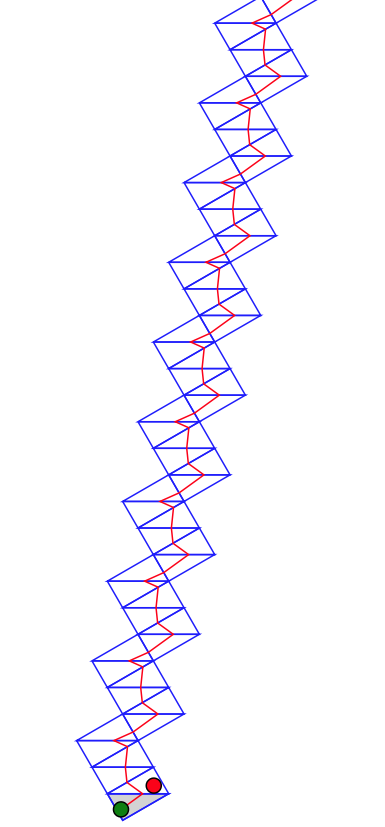}
\includegraphics*[scale=0.26]{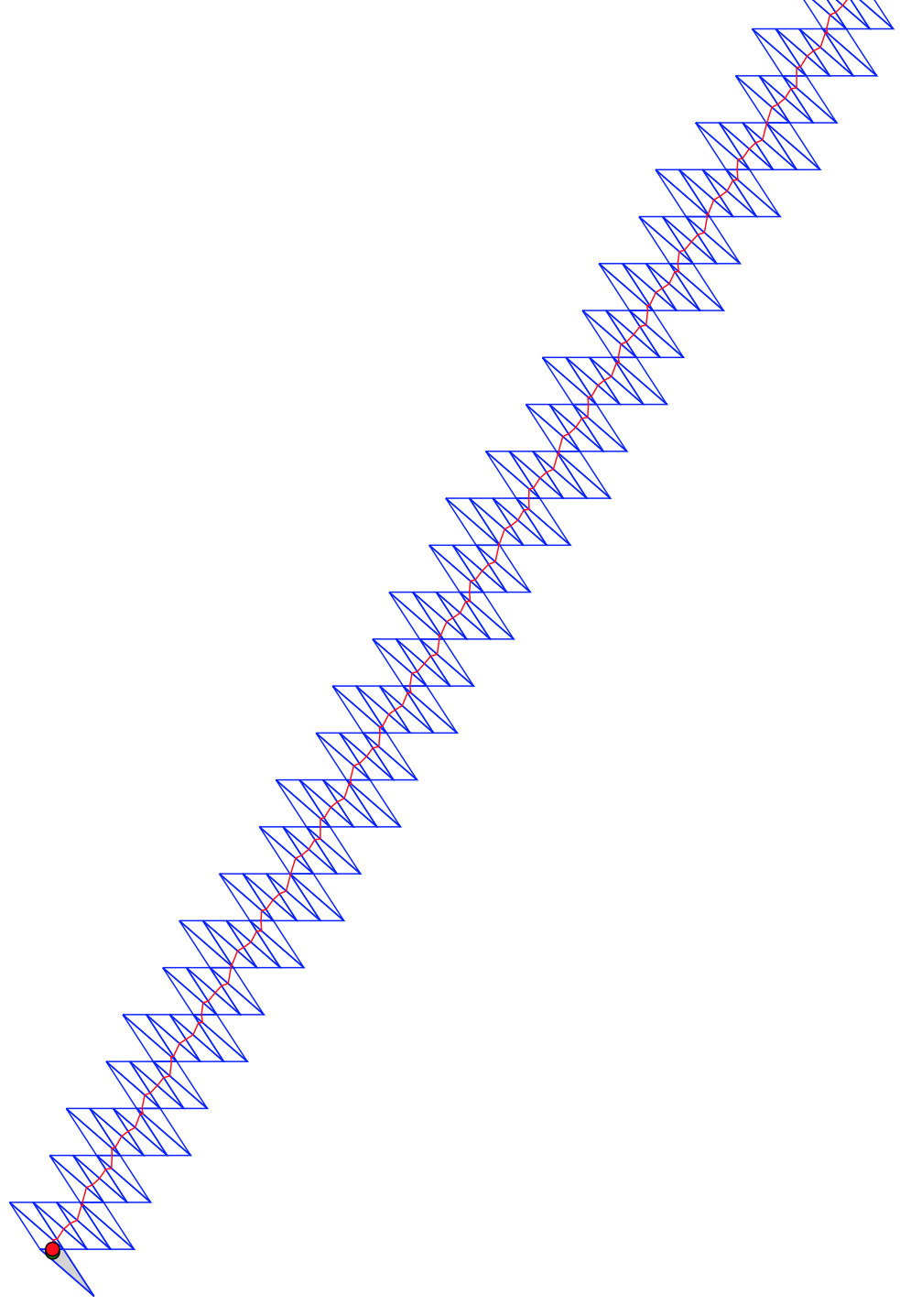}
\includegraphics*[scale=0.3]{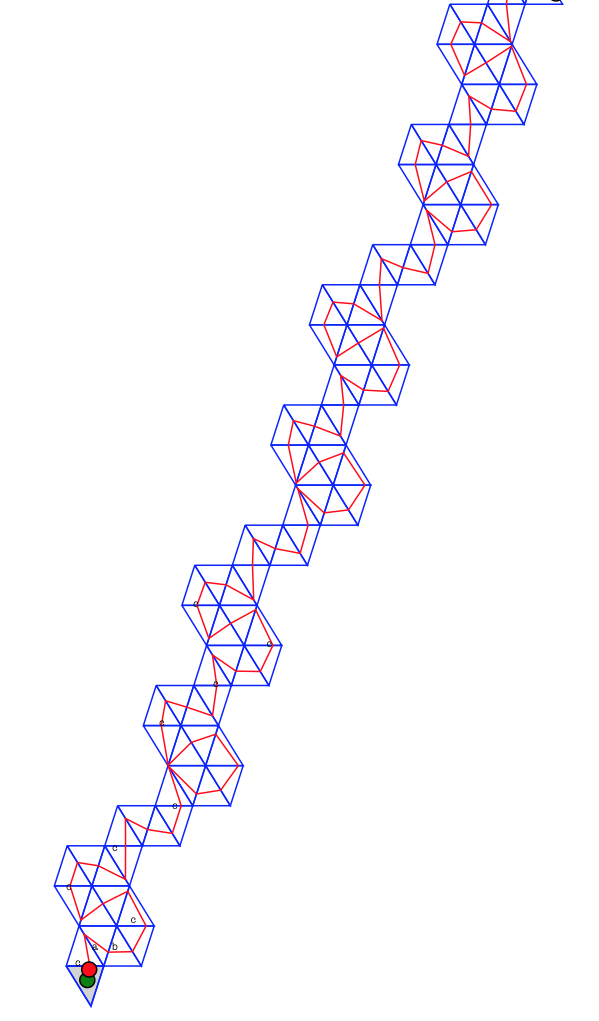}
\includegraphics*[scale=0.3]{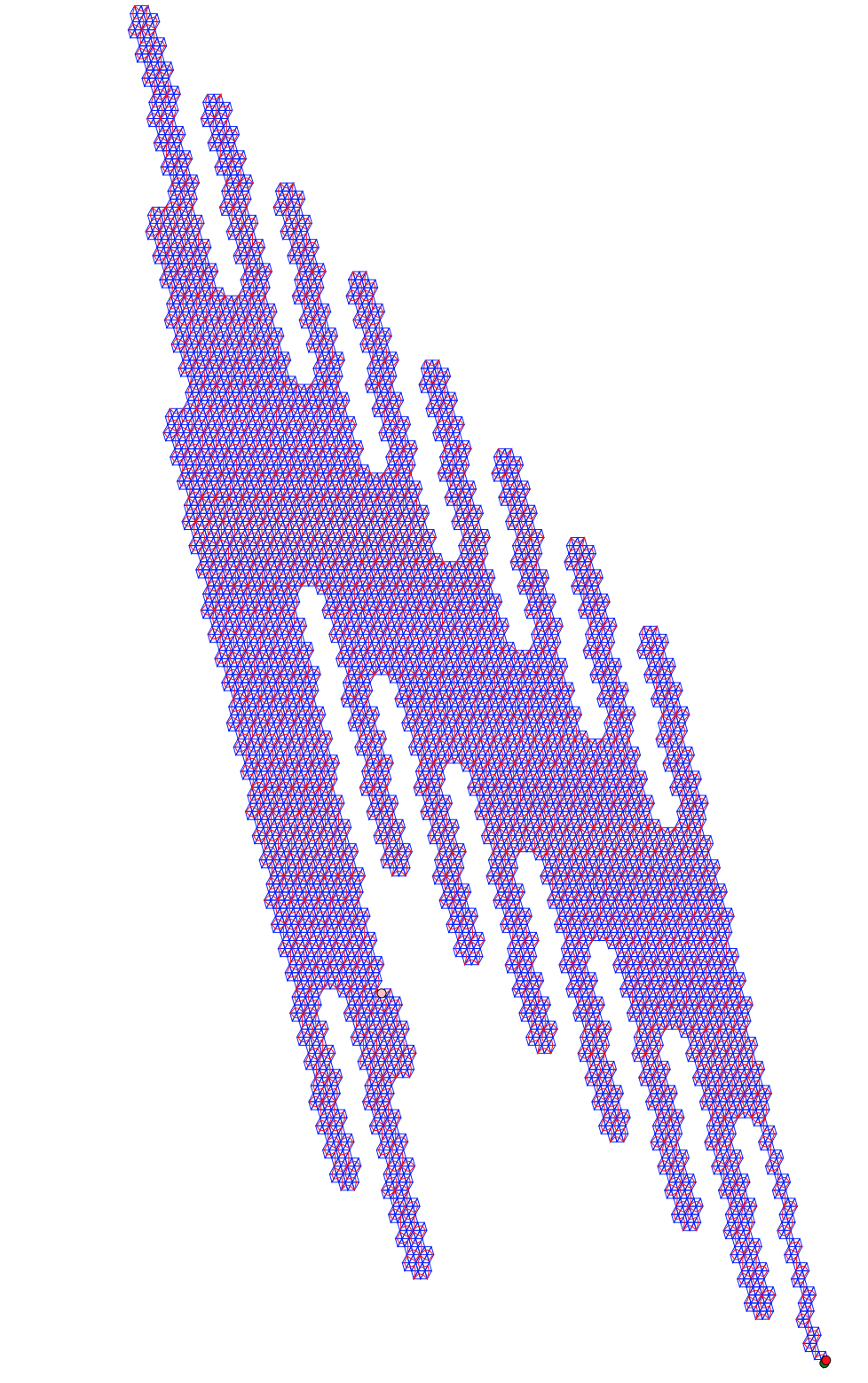}
\caption[]{Examples of trajectories in triangle tiling billiards. From left to right, from up to down: a periodic trajectory of period $10$; a periodic trajectory of period $114$; a drift-periodic trajectory (having a translation symmetry) of drift period $6$ in the $30-60-90$-triangle; linear escaping trajectory; another linear escaping trajectory (no translation symmetry, coding of crossed sides corresponds to a sturmian sequence, and a trajectory stays in a bounded distance from some fixed line in $\mathbb{R}^2$ with an irrational slope); a non-linearly escaping trajectory which spirals out to infinity, the first $15000$ segments of the trajectory are drawn. These pictures are drawn by the program \cite{HSL} authored by P. Hooper and A. St Laurent, accessible on-line. Remark: finding the last trajectory by randomly adjusting the parameters on the computer is impossible - to find it, we had first to know how to search for it, see Theorem \ref{thm:when_escape} first.}\label{figure: different behaviors}
\end{figure}

This proposition gives an understanding of relationships between the first three dynamical systems in our list. For a fixed triangle $\Delta$, the dynamics of the system \ref{def:reflection_in_a_circumcircle:2} for the same $\Delta$ and for all $\tau$ depicts a general dynamics of the triangle tiling billiard \ref{def:triangle_tiling_billiards_1}. The parameter $\tau(l)$ which is defining the chord $l$ in the system \ref{def:reflection_in_a_circumcircle:2} is the invariant of triangle tiling billiard trajectories, with the relationship $d(\delta)=|\cos \left( \pi \tau(l) \right)\vert$. For example, understanding the trajectories passing through the circumcenters of the crossed triangles ($d(\delta)=0$) is equivalent to the understanding of the system \ref{def:reflection_in_a_circumcircle:2} for $l$ being a diameter ($\tau=\frac{1}{2}$). The passage from the first system to the second is made by folding the triangles of the tiling along the trajectory $\delta$.
The third system is the same as the second: the connection can be seen by adding on the Figure \ref{fig:circumreflection} a circle of radius $|\cos \left( \pi \tau(l) \right)\vert$ concentric to an already drawn circle, and to notice that the chord $l$ is tangent to it. In other words, in the system \ref{def:reflection_in_a_circumcircle:2} the chord is fixed and a triangle moves, and in the system \ref{def:triangletangent_system_3} a chord moves while the triangle is fixed. Then, Theorem 3.3 in \cite{BDFI18} shows that the system \ref{def:reflection_in_a_circumcircle:2} written out explicitely as a map of the circle gives a map $F_{\tau, l_1, l_2, l_3} \in \CETthree$ with parameters $l_j$ corresponding to normalized angles and $\tau=\tau(l)$ (at least, where the map from $F_{\Delta, l}$ is defined).

We are interested in the understanding of the behavior of trajectories in triangle tiling billiards. The other three dynamical systems of this Section give different approaches of this same problem. Most of our tools (Sections \ref{sec: modified Rauzy}-- \ref{sec:properties_tiling_billiards}) are applied directly to the systems in \ref{def:fully_flpped_3_interval_exchange_transformations_on_the_circle:4}. 

\subsection{Qualitative behavior of orbits in triangle tiling billiards.}\label{subs:different}
As a corollary of Proposition \ref{prop:Dianacomeback}, one obtains a following classification (that we will use as well as definition) of different orbit behavior in a triangle tiling billiard.

\begin{definition}[\emph{\textbf{Types of trajectories of triangle tiling billiards}}]\label{def:different_types_of_trajectories}
Any trajectory of a triangle tiling billiard is one of these three types. 
\begin{itemize}
\item[•] \emph{Closed (periodic) orbits. } A trajectory is closed if it is a closed piecewise linear curve in the plane, without self-intersections.
\item[•] \emph{Drift-periodic orbits.} A trajectory is drift-periodic if it is invariant under a translation of the plane.
\item[•] \emph{Escaping orbits.} A trajectory is escaping if it is not periodic (neither closed nor drift-periodic).
\end{itemize}
\end{definition}

See Figure \ref{figure: different behaviors} for some examples of trajectories. 

Here are some remarks on the qualitative behavior that were clear from the previous work on triangle tiling billiards \cite{DDRSL16, BDFI18}. First, drift-periodic orbits occur only when $\Delta$ has rationally dependent angles (i.e. the ratios of the angles $\frac{\alpha}{\beta}, \frac{\beta}{\gamma} \in \Q$). Even more, if the angles are rationally dependent, any orbit is periodic (closed or drift-periodic), see \cite{BDFI18}. Let us note that both closed and drift-periodic orbits correspond to the periodic orbits of the system \ref{def:reflection_in_a_circumcircle:2}. Second, an important corollary of the "folding observation" (see paragraph \ref{subs:connections}) is that closed orbits come in open families. If $\delta$ is a closed trajectory then for a small enough perturbation of a point $(\delta, \Delta)$ the obtained trajectory is still closed. Indeed, an orbit close to a periodic one, by continuity, will continue the path in the same sequence of folded triangles (for a more quantitative explanation, see Theorem 4.2 in \cite{BDFI18}). Third, escaping orbits happen to have two types of qualitatively different behaviors (and these two behaviors do occur). We will distinguish between \emph{linearly escaping orbits} and \emph{non-linearly escaping orbits}. 

\begin{definition}[\emph{\textbf{Escaping trajectories}}]
A trajectory of a tiling billiard is \emph{linearly escaping} if it stays at a bounded distance from some fixed line in the plane and is not drift-periodic. An escaping trajectory which is not linearly escaping, is \emph{non-linearly escaping}. 
\end{definition}

As we will show in this article, linearly escaping orbits come in sets of positive measure in the space $\{(\delta, \Delta)\}$ of triangle tiling billiard trajectories,  although non-linearly escaping orbits are truly exceptional (and correspond to a zero measure set in this space). The first examples of non-linearly escaping orbits was given in \cite{BDFI18} as a corollary of the noticed connection between triangle tiling billiards and Arnoux-Yoccoz $6$-IET on the circle. These orbits were constructed as passing through the circumcenters of the triangles in the tiling corresponding to $\Delta$ with angles $\pi \frac{1-x}{2}, \pi \frac{1-x^2}{2}$ and $\pi \frac{1-x^3}{2}$, where $x+x^2+x^3=1, x \in \R$ is the Tribonacci number. Our main goal in the following is to show that\emph{ all} of the non-linearly escaping trajectories are passing through circumcenters of the crossed triangles in the tiling. And even more, the triangles $\Delta$ that permit (as forms of tiles) the existence of non-escaping trajectories, are parametrized by the Rauzy gasket $\boldsymbol{\mathcal{R}}$(see its Definition below, Definition \ref{defn:Rauzy_gasket}). This proves the Conjecture 4.19 from \cite{BDFI18}.

\section{Modified Rauzy induction and its Rauzy graphs.}\label{sec: modified Rauzy}
The Rauzy induction is a powerful tool in the study of interval exchange transformations. It was introduced in \cite{R79} by Rauzy. For the study of IETs with flips the standard Rauzy induction procedure has to be modified. This was first done by Nogueira in \cite{N89}. The first two paragraphs of this Section don't contain any new results but only introduces the core objects. Our presentation follows the notations analogue to those chosen by Delecroix in his \emph{Lecture notes on IETs} \cite{D16}, where he in his turn follows \cite{MMY10}. We adjust the notations in order to present the modified Rauzy induction for interval exchange transformations with flips.

\subsection{Interval exchange transformations with and without flips and their dynamics.}\label{subs:IETS_with_without}
From now on, we denote $\boldsymbol{\bar{1}}:=(1, \ldots, 1) \in \R^n$.

\begin{definition}[\textbf{\emph{Interval exchange transformations}}]\label{def:IET}
\emph{An interval exchange transformation (IET)} of the interval $[a,b)$ is a bijection $T: [a,b) \rightarrow [a,b)$ such that there exist the points $a=a_0<a_1< \ldots < a_n=b$ on the interval and the numbers $t_1, \ldots t_n \in \R$ (\emph{shifts}) such that $\left.T\right|_{[a_{i-1},a_i)} (x) = x+t_i, 1 \leq i \leq n$. The set of all such transformations is denoted by $\IETn [a,b)$. We write simply $\IETn$ for the set of all IETs on all intervals.
\end{definition}
\begin{definition}[\textbf{\emph{Interval exchange transformations with flips}}]\label{def:IETF}
Fix the points $a=a_0<a_1< \ldots < a_n=b$, the numbers $t_1, \ldots t_n \in \R$ as well as the vector $\textbf{k}=(k_1, \ldots , k_n)$ with $k_i \in \{-1,1\}$.
Then define a map  $F: [a,b) \rightarrow [a,b)$ in a following way: 
\begin{equation*}
\left.F\right|_{[a_{i-1},a_i)} (x) = k_i x+t_i, 1 \leq i \leq n.
\end{equation*}
If $F: [a,b) \rightarrow [a,b)$ is a bijection between the sets $[a,b) \setminus \cup_i \{a_i\}$ and $[a,b) \setminus \cup_i F(a_i)$ and if $\boldsymbol{k} \neq\boldsymbol{\bar{1}}$, then $F$ is \emph{an interval exchange transformation with flips (IETF)}. If $k_i=-1$ (or $1$), then we say that the interval $[a_{i-1},a_i)$ is \emph{flipped} (or \emph{not flipped}). 

The set of all such transformations is denoted by $\IETFn[a,b)$ or, in the case when the interval is not specified, simply $\IETFn$. The vector $\textbf{k}$ is called a \emph{vector of flips.}
\end{definition}

Note that the Definition \ref{def:IET} is a part of Definition \ref{def:IETF} when $\textbf{k} = \boldsymbol{\bar{1}}$.
\begin{definition}[\emph{\textbf{Fully flipped interval exchange transformations}}]\label{def:FETT}
We say that $F \in \IETFn[a,b)$ is a \emph{fully flipped interval exchange transformation (fully flipped IET)} if $\textbf{k}=- \boldsymbol{\bar{1}}.$ We denote the set of all such transformations by $\FETn[a,b)$, or simply $\FETn$ for the set of all fully flipped IETs.
\end{definition}

\begin{definition}[\emph{\textbf{Associating permutation to an interval exchange transformation (with or without flips)}}]\label{def:permutation}
Any map $F \in \IETn [a,b) \cup \IETFn[a,b) $ is well defined as a bijection between the sets $[a,b)  \setminus \cup_i \{a_i\}$ and $[a,b)  \setminus \cup_i \{F(a_i)\}$. Define a labeling map $\mathcal{L}: \{ [a_{i}, a_{i+1}), i=0, \ldots, n-1\} \rightarrow \{1, \ldots, n\}$ such that $\mathcal{L} ([a_i, a_{i+1}))=i+1$. Then, one associates to $F$ a permutation $\sigma \in S_n$ in a natural way by reading off the labels of the intervals in the set $[a,b) \setminus \cup_i \{F(a_i)\}$ in the order of their positions on the interval $[a,b) $. 
A standard graphic representation of a permutation is a $2 \times n$ matrix with a first line containing the numbers $1, \ldots, n$ in a natural order. For a map $F \in \IETFn [a,b) $ and a permutation corresponding to it, we will add additional information to this graphic representation that is a graphic representation of the vector of flips $\textbf{k}$ by drawing bars over the labels which correspond to the intervals which are flipped, in the first as well as in the second row. In this way, a permutation corresponding to a map in $\FETn [a,b) $ has a graphic representation with bars over all the entries.
\end{definition}

Any map $F \in \IETFn$ defines the triple $(\sigma, \textbf{k}, \{\lambda_i\}_{i=1}^n)$ with $\lambda_i=a_i-a_{i-1}, i=1, \ldots, n$ and vice-versa : the combinatorial information and the lengths of the intervals define an interval exchange transformation with flips.

\smallskip

The three definitions of classes of the maps on the interval given above can be generalized in a natural way to the $n$-interval exchanges on the circle (by replacing $[a,b) $ with $\Sph^1$ and identifying $a_0=a_n$). We denote the obtained classes correspondingly as $\mathrm{CET}^n$ (Definition \ref{def:IET}), $\mathrm{CETF}^n$ (Definition \ref{def:IETF}) and $\mathrm{FCET}^n$ (Definition \ref{def:FETT}). The results of this work mostly concern a subfamily of the set $\mathrm{FCET}^n$ of fully flipped interval exchange transformations on the circle that has already been defined for $n=3$ in the previous Section (see Definition \ref{def:fully_flpped_3_interval_exchange_transformations_on_the_circle:4}) in relation to triangle tiling billiards, and that we define in full generality now.

\begin{definition}\label{def:CETCET}
Fix $l_1, l_2, \ldots, l_n \in  \R_{>0}$ such that $l_1+\ldots+l_n=1$. Take $\Sph^1=\R / \Z$ and define $a_0, a_1, \ldots, a_n \in \Sph^1$ as $a_0=0, a_1=l_1, a_2=l_1+l_2, \ldots, a_i=l_1+\ldots+l_i, \ldots, a_{n-1}=1-l_n, a_n=a_0$. 
A map $F\in \mathrm{FCET}^n [0,1)$ is said to belong to the set $\CETn$ if there exists $\tau \in \Sph^1$ such that
\begin{equation*}
F(x)=      -x+l_i +\tau   \;  \; \mathrm{mod}\; 1 \; \; \forall x \in [a_{i-1}, a_i) \; \; \forall i =1, \ldots n.
\end{equation*}
\end{definition}

\begin{remark}\label{remark:CET_as_IET_plus_one}
As we defined the circle interval exchange transformations by demanding $a_0=a_n$, the circle has a marked point $a_0=a_n$. Hence, the sets
 $\mathrm{CET}^n$, $\mathrm{CETF}^n$, $\mathrm{FCET}^n$ (and $\CETn$) are in natural bijection with the subsets of, correspondingly, $\mathrm{IET}^{n+1}$, $\IETFplus$ and $\FETplus$. Indeed, these bijections are obtained by cutting the interval $[a_i, a_{i+1})$ such that $a_0 \in F(a_i, a_{i+1})$, into two intervals. So one can also define corresponding permutations for the maps on the circle, by applying the Definition \ref{def:permutation} to the images of the maps on the circles in the set of transformations of the interval. The corresponding permutation belongs to $S_{n+1}$ and not $S_n$ (which occurs only if $a_0 \in \cup_i F(a_i)$), see Figure \ref{fig:defdif}. 
\end{remark}

\begin{remark}
For the interval exchange transformations on a circle without a marked point, the definition of a corresponding permutation is problematic. Although, we want to point out that the set $\CETn$ of fully flipped interval exchange transformations on the circle correspond to a "trivial permutation"(when the circle is seen as a circle without a marked point) of the indices of intervals, and hence defined by one parameter which is $\tau \in \Sph^1$. But after a cut, a permutation of a corresponding map in $\FETplus$ is no longer a trivial permutation, see Figure \ref{fig:defdif}.
\end{remark}

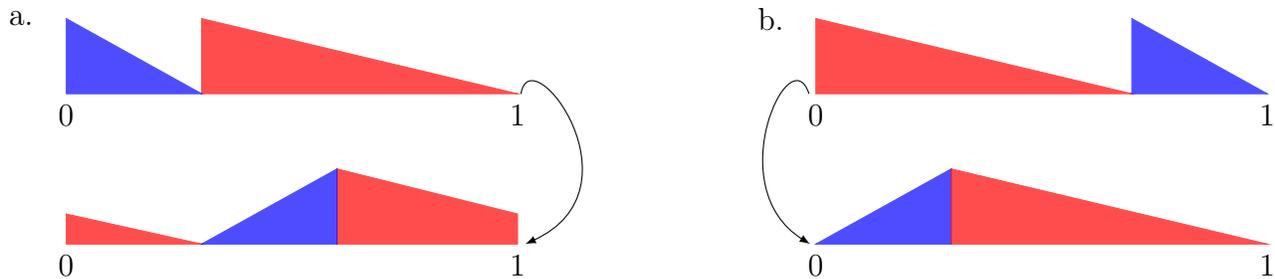
\begin{figure}
\centering
\begin{tikzpicture}[xscale=6]
\path[draw](0,-2)  node[below] {$0$} --cycle;
\path[draw](1,-2) node[below] (2) {$1$} --cycle;
\path[draw, fill=red,red, opacity=0.7] (.3,0)--(.3,1)--(1,0)--cycle;
\path[draw, fill=blue, blue, opacity=0.7] (0,0)--(.3,0)--(0,1)--cycle;
\path[draw] (0,0) node[below]{$0$} --cycle;
\path[draw] (1,0) node[below] (1) {$1$} --cycle;
\path[draw, fill=red,red, opacity=0.7] (0,-2) --(0.3,-2)--(0,-1.6)--cycle;
\path[draw, fill=red,red, opacity=0.7] (0.6,-2)--(0.6,-1)--(1,-1.6)--(1,-2) --cycle;
\path[draw, fill=blue,blue, opacity=0.7] (.3,-2)--(.6,-2)--(0.6,-1)--cycle;
\node at (-0.1,1) {a.};
 \tikzset{
        arrow/.style={
            color=black,
            draw=black,
            -latex,
                font=\fontsize{13}{13}\selectfont},  
        }

\draw[arrow](1) to [out=88,in=70]  (2);
\end{tikzpicture}
\qquad
\begin{tikzpicture}[xscale=6]
\path[draw, fill=red,red, opacity=0.7] (0,0)--(0,1)--(0.7,0)--cycle;
\path[draw, fill=blue, blue, opacity=0.7] (.7,0)--(.7,1)--(1,0)--cycle;
\path[draw] (0,0) node[below] (1){$0$} --cycle;
\path[draw] (1,0) node[below]{$1$} --cycle;
\path[draw] (0,-2) node[below] (2) {$0$} --cycle;
\path[draw] (1,-2) node[below] {$1$} --cycle;
\path[draw, fill=blue,blue, opacity=0.7] (0,-2)--(0.3,-2)--(.3,-1)--cycle;
\path[draw, fill=red,red, opacity=0.7] (.3,-2)--(.3,-1)--(1,-2)--cycle;
\node at (-0.1,1) {b.};

 \tikzset{
        arrow/.style={
            color=black,
            draw=black,
            -latex,
                font=\fontsize{12}{12}\selectfont},
        }

\draw[arrow](1) to [out=94,in=105]  (2);

\end{tikzpicture}

\caption[]{For a fixed $F\in \mathrm{CETF}^2$ on the circle represented here, after the marking $a_0=a_n=0 \in \Sph^1$ and passing to an IETF on the interval, three cases can occur. First, as in a., when $0$ belongs to the image of a non-flipped interval, one obtains a map $F_1 \in \mathrm{IETF}^3 [0,1)$ with the combinatorics
$\begin{pmatrix} \bar{1} & 2 & 3\\
3 & \bar{1} & 2
\end{pmatrix}$. Second, when $0$ belongs to the image of a flipped interval, the obtained map  $F_2 \in \mathrm{IETF}^3 [0,1)$ has the combinatorics
$\begin{pmatrix} \bar{1} & \bar{2} & 3\\
\bar{1} & 3 & \bar{2}
\end{pmatrix}$. Third, as in b., when $0$ coincides with the image of one of the singularities, a corresponding map $F_3 \in \mathrm{IETF}^2 [0,1)$, with combinatorics $\begin{pmatrix} \bar{1} & 2 \\
\bar{1} & 2
\end{pmatrix}$ or $\begin{pmatrix} \bar{1} & 2 \\
2 & \bar{1}
\end{pmatrix}$.
} \label{fig:defdif}
\end{figure}

The dynamics of the maps in $\IETFn$ is very different from the dynamics of the maps in $\mathrm{IET}^n$. Indeed, $\mathrm{IET}^2$ is a set of rotations of the circle (almost all of which - irrational rotations - are minimal), although any map in the set of $\mathrm{IETF}^2$ is completely periodic \cite{K75}. From the first sight, this fact seems quite surprising, as seems the answer to a following problem.

\begin{problem}[\emph{\textbf{Cake problem}}]
Take a chocolate cake with ice frosting on top. Cut out a slice of the cake of size $\alpha$, and put it back in place but with a flip. Then, the chocolate comes to the surface and the frosting is hidden on the lower level of the cake. Now reiterate the procedure, starting from the place where the cake was cut last time.
Question: for what values of angle $\alpha$, after some finite number of iterations, all of the frosting will come back ? 
\end{problem}

\begin{answer}
For \emph{\textbf{all}} $\alpha$.
\end{answer}


The second author made a short movie abouth this problem, co-created with C. Gourdon \cite{GPR} . The cake problem is, of course, just a reformulation of the fact that a map from $\mathrm{IETF}^2$ presented on Figure \ref{fig:defdif} is completely periodic, see Proposition \ref{prop:keane} in the following.  The analogue of this periodicity property holds in the family $\IETFn$ even for $n>2$.

\begin{theorem}[\cite{N89}]\label{thm:Nogueira}
Fix a permutation $\sigma \in S_n$ and a vector $\textbf{k} \in \{-1,1\}^n, \textbf{k} \neq \boldsymbol{\bar{1}}$. Then for almost any choice of points $a_0=0, a_1, \ldots, a_n=1$ (with respect to Lebesgue measure), for the map $F \in \IETFn$ defined by $\sigma, \textbf{k}$ and $\{a_i\}_{i=0}^n$ (as in Definition \ref{def:IETF}) there exists $N\in \N^*$ and $I \subset [0,1)$ such that the first return map of $F$ in restriction to $I$ coincides with $F^N$ and is a flip of the interval $I$ onto itself.
\end{theorem}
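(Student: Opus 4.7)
The plan is to use the modified Rauzy induction $\Rauzy$ for IETFs (to be defined in Section \ref{sec: modified Rauzy}) as the driving tool. For fixed combinatorics $(\sigma, \textbf{k})$ with $\textbf{k}\neq \one$, the parameter space of IETFs is the open simplex $\Delta_n:=\{\lambda\in\R^n_{>0}\,:\,\sum \lambda_i=1\}$. The induction replaces $F$ by its first return map on a suitable shorter subinterval, transforming the length vector by $\lambda \mapsto M^{-1}\lambda$ for some nonnegative integer Rauzy matrix $M$, and changing the combinatorics within a finite directed graph $\Gamma$ (the Rauzy graph). First I would set up this framework and verify that the induction is well defined away from a codimension-one subset of $\Delta_n$ (the usual Keane-type exceptional set, of Lebesgue measure zero).

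Next I would identify \emph{terminating} combinatorial types: those $(\sigma',\textbf{k}')$ in $\Gamma$ for which a flipped interval is contained in its own image under the induced map. If an orbit of $\Rauzy$ starting at $(\sigma,\textbf{k},\lambda)$ reaches such a terminating type after $k$ steps, then tracing back the first return times along the $k$ Rauzy steps gives an $N\in\N^*$ and a subinterval $I$ on which $F^N$ acts as a flip $x\mapsto c-x$, which is exactly the conclusion. Showing that such terminating types exist and are reachable from every vertex of $\Gamma$ is essentially combinatorial: one uses the fact that $\textbf{k}\neq \one$ forces at least one flip, so by iterating $\Rauzy$ a flipped subinterval gets progressively shortened relative to its neighbors and eventually becomes entirely contained in its own image.

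The measure-theoretic heart of the proof is then the statement that, for Lebesgue-a.e.\ $\lambda\in\Delta_n$, the $\Rauzy$-orbit reaches a terminating vertex of $\Gamma$ in finitely many steps. I would argue by contradiction: if the set $E\subset\Delta_n$ of non-terminating parameters had positive Lebesgue measure, the orbit of every $\lambda\in E$ would remain in the (finite) subgraph of non-terminating vertices forever. Using bounded distortion for projective action of Rauzy matrices on the simplex, and the fact that at every non-terminating vertex there is a definite positive proportion (in Lebesgue measure on the local cell of $\Delta_n$) of parameters whose next $\Rauzy$ step is a terminating arrow, one gets a uniform lower bound on the probability of termination at each step. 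Combining this with nestedness of the cells $E_k$ defined by the first $k$ non-terminating arrows, one concludes $|E_k|\to 0$ geometrically, contradicting $|E|>0$.

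The main obstacle will be the last quantitative step: verifying that every non-terminating vertex of the Rauzy graph admits, on a uniformly positive fraction of its parameter cell, a terminating outgoing arrow. This is where a careful inspection of the Rauzy graph is needed and where Nogueira's original argument becomes delicate; in the small-$n$ cases relevant to this paper (and in particular for the $\CETthree$ family coming from triangle tiling billiards) this can be checked directly from the explicit Rauzy graphs, a viewpoint that will be exploited later via Lemma \ref{lemma:main_vector_preserved_lemma}. A secondary technical issue is the treatment of the null set on which $\Rauzy$ is not well-defined (parameters with $F$-orbit of a discontinuity hitting another discontinuity), but this set is handled by the standard Keane-type argument and does not affect the almost-sure conclusion.
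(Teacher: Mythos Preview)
The paper does not give its own proof of this theorem: it is quoted as a result of Nogueira \cite{N89} and used as a black box (the surrounding text only remarks that Nogueira's proof is based on the modified Rauzy induction introduced in Section \ref{sec: modified Rauzy}). So there is no in-paper argument to compare against; your proposal is effectively a sketch of Nogueira's original proof.

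As such a sketch it is on the right track—Nogueira's argument is indeed a termination statement for the modified Rauzy induction combined with a distortion estimate on the simplex of lengths—but two points need correction. First, your notion of ``terminating combinatorial type'' is not the one that drives the proof: the induction halts exactly when the last top label and the last bottom label coincide (the symbol $\square$ in Definition \ref{def:MRI}), not when ``a flipped interval is contained in its own image''. The flipped periodic subinterval in the conclusion appears only afterwards, by passing to the first return map on the complement and inducting on $n$. Second, it is not true that at every non-terminating vertex a \emph{single} Rauzy step leads with positive probability to a terminating one: in the graphs drawn in this paper (e.g.\ Figure \ref{pic:graph_for_four}) several vertices have both outgoing arrows pointing to non-terminating vertices. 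What one actually needs is that from every recurrent vertex there is a \emph{finite path} of bounded length to a stopping configuration; the bounded-distortion and geometric-decay argument you outline then applies to these finite exit paths rather than to single steps. You correctly identify this combinatorial verification as the main obstacle, and that is precisely where the hypothesis $\textbf{k}\neq \one$ enters and where the substance of \cite{N89} lies.
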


\begin{remark} The conclusion of this theorem was recently sharpened by Skripchenko and Troubetzkoy who proved that the set of parameters with minimal dynamics  does not have maximal Hausdorff dimension \cite{ST18}. 

\end{remark}

\subsection{Modified Rauzy induction for interval exchange transformations with flips: definition and notations.}\label{subs:MRI}
The proof of Theorem \ref{thm:Nogueira} is based on the tool of modified Rauzy induction that we introduce in this paragraph, and that we use extensively throughout the article. 

When in the previous paragraph the permutations characterizing the IETs had a first row fixed and trivial (see Definition \ref{def:permutation}), in this paragraph we will label the intervals by letters and not by numbers, and we will allow any order on the top and bottom line of the permutation. More formally, now let $\A$ be an alphabet of cardinality $n$. Let the maps $\sigma^{\mathrm{top}}$ and $\sigma^{\mathrm{bot}}$ be two orders on the alphabet $\A$. 

\begin{definition}
A \emph{generalized permutation} on the alphabet $\A$ corresponding to two bijections $\sigma^{\mathrm{top}}, \sigma^{\mathrm{bot}}: \A \rightarrow \{1, \ldots, n\}$ is 
defined as
\begin{equation}\label{eq:matrix_representation_gen_permutation}
\sigma=\begin{pmatrix}
\left(\sigma^{\ttop}\right)^{-1}(1)&\ldots&\left(\sigma^{\ttop}\right)^{-1}(n) \\
\left(\sigma^{\bbot}\right)^{-1}(1)&\ldots&\left(\sigma^{\bbot}\right)^{-1}(n) 
\end{pmatrix}.
\end{equation}
We denote the set of all generalized permutations on the alphabet $\A$ with $n$ elements by $S_n^{\A}$.
\end{definition}

Now let $\boldsymbol{\lambda}:=(\lambda_i)_{i \in \A}$ be a vector of positive real numbers, with $\lambda=|\boldsymbol{\lambda}|:=\sum_{i \in \A} \lambda_i$. Suppose $\boldsymbol{k}=(k_i)_{i \in \A}$, $k_i \in \{-1,1\}$. Then, one associates a map $F \in \mathrm{IET}^n [0, \lambda) \cup \IETFn [0, \lambda) $ to the data $\{\sigma, \textbf{k}, \boldsymbol{\lambda}\}$ in a following way. First, set for every $j \in \{0, \ldots, n\}$ the following quantities:

\begin{equation*}
\alpha_j^{\mathrm{top}}:=\sum_{i: \sigma^{\mathrm{top}}(i) \leq j} \lambda_i, \;\;\; \alpha_j^{\mathrm{bot}}:=\sum_{i: \sigma^{\mathrm{bot}}(i) \leq j} \lambda_i.
\end{equation*}

These quantities define the points that give two partitions of $[0, \lambda)$. Namely, we set for each $i \in \A$: 

\begin{equation*}
I_i^{\mathrm{top}}:=\left(
 \alpha^{\mathrm{top}}_{\sigma^{\mathrm{top}}(i)-1}, \alpha^{\mathrm{top}}_{\sigma^{\mathrm{top}}(i)} 
 \right), \;\;\; I_i^{\mathrm{bot}}:=\left(
 \alpha^{\mathrm{bot}}_{\sigma^{\mathrm{bot}}(i)-1}, \alpha^{\mathrm{bot}}_{\sigma^{\mathrm{bot}}(i)} 
 \right).
\end{equation*}

Define $F$ in restriction to each interval $I_i^{\ttop}$ by first, $F\left(I_i^{\ttop}\right)=I_i^{\bbot}$ and moreover, if $k_i=1$, it is a translation; and otherwise it is a translation in composition with a flip. This defines $F$ completely on $[0, \lambda)$ outside the extremities of the intervals $I_i^{\ttop}, i \in \{1, \ldots, n \}$. We call $\alpha_i^{\mathrm{top}}$ top singularities and $\alpha_i^{\mathrm{bot}}$ bottom singularities of $F$, $i=1, \ldots, n-1$. Any map $F \in \mathrm{IET}^n [0, \lambda) \cup \IETFn [0, \lambda)$ is represented in such a way, and such a representation is unique (up to the re-labelling of the first row in $\sigma$). In the following, we will identify $F \in \IETFn [0, \lambda)$ with the corresponding data $\{\sigma, \boldsymbol{k}, \boldsymbol{\lambda}\}$. The conventions of graphic representation are transmitted from the previous paragraph, see Figure \ref{fig:defintervalexchange} for illustration by example. 

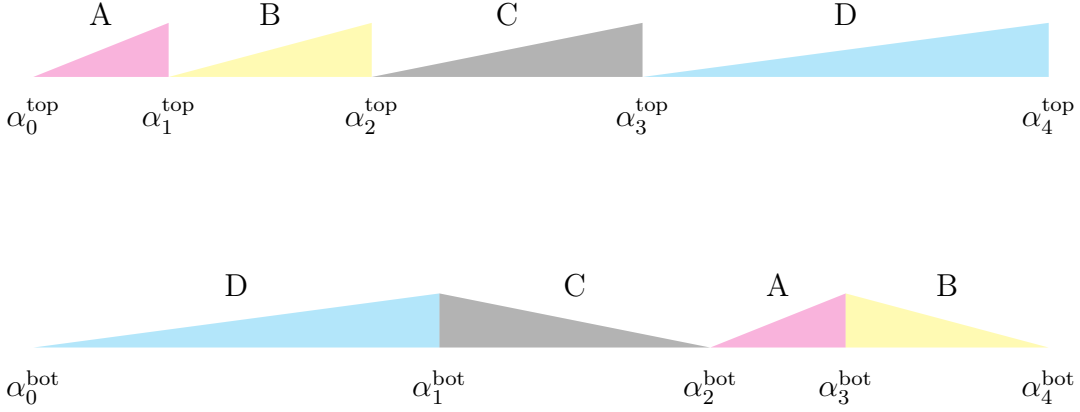
\begin{figure}
\centering
\begin{tikzpicture}[scale=1.8]

\path[fill=magenta, opacity=0.3] (0,0)--(1,0)--(1,0.4)--cycle;
\path[fill=yellow, opacity=0.3] (1,0)--(2.5,0)--(2.5,0.4)--cycle;
\path[fill=black, opacity=0.3] (2.5,0)--(4.5,0)--(4.5,0.4)--cycle;
\path[fill=cyan, opacity=0.3] (4.5,0)--(7.5,0)--(7.5,0.4)--cycle;

\path[fill=cyan, opacity=0.3] (0,-2)--(3,-2)--(3,-1.6)--cycle;
\path[fill=black, opacity=0.3] (3,-1.6)--(3,-2)--(5,-2)--cycle;
\path[fill=magenta, opacity=0.3] (5,-2)--(6,-2)--(6,-1.6)--cycle;
\path[fill=yellow, opacity=0.3] (6,-1.6)--(6,-2)--(7.5,-2)--cycle;

\draw(0.5, 0.3) node[above]{A};
\draw(1.75, 0.3) node[above]{B};
\draw(3.5, 0.3) node[above]{C};
\draw(6, 0.3) node[above]{D};
\draw(0, -0.5) node[above]{$\alpha_0^{\textrm{top}}$};
\draw(1, -0.5) node[above]{$\alpha_1^{\textrm{top}}$};
\draw(2.5, -0.5) node[above]{$\alpha_2^{\textrm{top}}$};
\draw(4.5, -0.5) node[above]{$\alpha_3^{\textrm{top}}$};
\draw(7.5, -0.5) node[above]{$\alpha_4^{\textrm{top}}$};

\draw(1.5, -1.7) node[above]{D};
\draw(4, -1.7) node[above]{C};
\draw(5.5, -1.7) node[above]{A};
\draw(6.75, -1.7) node[above]{B};
\draw(0, -2.5) node[above]{$\alpha_0^{\textrm{bot}}$};
\draw(3, -2.5) node[above]{$\alpha_1^{\textrm{bot}}$};
\draw(5, -2.5) node[above]{$\alpha_2^{\textrm{bot}}$};
\draw(6, -2.5) node[above]{$\alpha_3^{\textrm{bot}}$};
\draw(7.5, -2.5) node[above]{$\alpha_4^{\textrm{bot}}$};

\end{tikzpicture}

\caption[]{For the data $(\sigma, \boldsymbol{\lambda}, \boldsymbol{k})$, the interval exchange transformation with flips is constructed, the preimage is cut according to the partition $\left(I_i^{\mathrm{top}}\right)$ while the image - according to $\left(I_i^{\mathrm{bot}}\right)$. Here $(\sigma, \boldsymbol{k})= \begin{pmatrix} A & \bar{B} & \bar{C} & D\\
D & \bar{C} & A & \bar{B}
\end{pmatrix}$. }
\label{fig:defintervalexchange}
\end{figure}

\begin{definition}\label{def:irreducible}
A generalized permutation $\sigma$ is called \emph{reducible} if $\exists l, l\in \{1, \ldots, n-1 \}$ such that
\begin{equation*}
\left(\sigma^{\mathrm{top}}\right)^{-1}\left(\left\{1,2, \ldots, l\right\}\right) = \left(\sigma^{\mathrm{bot}}\right)^{-1}\left(\left\{1,2, \ldots, l\right\}\right).
\end{equation*}
A corresponding $F \in \mathrm{IET}^n \cup \IETFn$ is in this case called \emph{reduced}. If $\sigma$ is not reducible, it is called \emph{irreducible.}
\end{definition}

Automatically, reduced IETs and IETFs are not minimal.
\bigskip

Now we are ready to define the modified Rauzy induction. It is an algorithm that either associates to a map $F \in \IETFn$ another map $\mathcal{R} F \in \IETFn$ (defined on a smaller interval), or stops. One can look at the modified Rauzy induction as a map 
\begin{equation*}
\mathcal{R}: \IETFn \rightarrow \IETFn \cup \{\square\},
\end{equation*}
where $\square$ correspond to a stop of Rauzy induction. 

\begin{definition}[\emph{\textbf{Modified Rauzy induction}}]\label{def:MRI}
For $F \in \IETFn [0, \lambda)$, if $\alpha_{n-1}^{\mathrm{top}}=\alpha_{n-1}^{\mathrm{bot}}$ then $\mathcal{R} F:=\square$. Otherwise, the longest interval between
$I^{\mathrm{top}}_{(\sigma^{\mathrm{top}})^{-1}(n)}$ and $I^{\mathrm{bot}}_{(\sigma^{\mathrm{bot}})^{-1}(n)}$ is called the \emph{winner} and the shortest one the \emph{loser}. Then $\mathcal{R} F$ is a first return map of $F$ on the interval $\left[0, \max \left(\alpha_{n-1}^{\mathrm{top}},\alpha_{n-1}^{\mathrm{bot}}\right)\right]$, i.e. $\mathcal{R}F$ is defined on the interval of the length different from the length of the initial interval by the length of the loser. In the case when the winner is on top (on bottom), the step of Rauzy induction is called\emph{ top (bottom) induction.} 
\end{definition}

When $F'=\mathcal{R} F$ exists, it is an interval exchange transformation with flips, and its data 
\begin{equation*}
(\sigma', \boldsymbol{k}', \boldsymbol{\lambda}') \in S_n^{}\mathcal{A} \times \left( \{-1,1\}^n \setminus \one \right) \times \R^n_+
\end{equation*}
can be deduced from the analogical data $(\sigma, \boldsymbol{k}, \boldsymbol{\lambda})$ of $F$. In the case of IETs with flips, one step of Rauzy induction has a bigger number of combinatorial possibilities than one case of IETs without flips. Indeed, not only the lengths of the winner and loser are taken into account but also the values $k_{(\sigma^{\mathrm{top}})^{-1}(n)}$ and $k_{(\sigma^{\mathrm{bot}})^{-1}(n)}$ of the vector $\textbf{k}$ (that take values in the set $\{-1,1\}$). This gives indeed $8$ possibilities (instead of $2$ for standard Rauzy induction for IETs), see Table \ref{table:how_works}.

\begin{remark}
An important remark is that $\sigma'$ is defined on the same set of labels as is $\sigma$, and the labels change in a natural way (analogous to the standard Rauzy induction procedure). The winner is cut into two non-empty intervals by one of the extremities of the loser. The left of this interval is relabeled into the label of the winner as well as its image (if the winner was on top) or pre-image (if the winner was on bottom). The label of the loser is assigned to the part of the initial winner interval which doesn't have a label in one of the lines (on top or on bottom, if the winner was on bottom or on top, correspondingly). In another line the label of the interval which was a loser doesn't change. This remark can be considered as a part of Definition \ref{def:MRI}.
\end{remark}

\begin{table}
\setlength{\tabcolsep}{2mm} 
\def\arraystretch{1.25} 
\centering

  \begin{tabular}{cc|c}

   $(k_B, k_D)$ & {top induction, case $\lambda_D>\lambda_B$   }&   bottom induction, case $\lambda_D<\lambda_B$   \\ 

&$\lambda_D'=\lambda_D-\lambda_B$ & $\lambda_B'=\lambda_B-\lambda_D$ 
\\
$(1,1)$
& $\begin{pmatrix}
\bar{A}&B&\bar{C}&D\\
D&\bar{A}&\bar{C}&B
\end{pmatrix} 

\rightarrow
\begin{pmatrix}
\bar{A}&B&\bar{C}&D\\
D&B&\bar{A}&\bar{C}
\end{pmatrix}$     
        
     &    $\begin{pmatrix}
\bar{A}&B&\bar{C}&D\\
D&\bar{A}&\bar{C}&B
\end{pmatrix} \rightarrow 
\begin{pmatrix}
\bar{A}&B&D&\bar{C}\\
D&\bar{A}&\bar{C}&B
\end{pmatrix}$  

    \\ [30pt]

$(1, -1)$
& $\begin{pmatrix}
\bar{A}&B&\bar{C}&\bar{D}\\
\bar{D}&\bar{A}&\bar{C}&B
\end{pmatrix} 

\rightarrow
\begin{pmatrix}
\bar{A}&\bar{B}&\bar{C}&D\\
\bar{B}&\bar{D}&\bar{A}&\bar{C}
\end{pmatrix}$

     &    $\begin{pmatrix}
\bar{A}&B&\bar{C}&\bar{D}\\
\bar{D}&\bar{A}&\bar{C}&B
\end{pmatrix}  \rightarrow 
\begin{pmatrix}
\bar{A}&B&\bar{D}&\bar{C}\\
D&\bar{A}&\bar{C}&B
\end{pmatrix}$  

   \\[30pt]

$(-1, 1)$
& $\begin{pmatrix}
\bar{A}&\bar{B}&\bar{C}&D\\
D&\bar{A}&\bar{C}&\bar{B}
\end{pmatrix} 

\rightarrow
\begin{pmatrix}
\bar{A}&\bar{B}&\bar{C}&D\\
{D}&\bar{B}&\bar{A}&\bar{C}
\end{pmatrix}$

     &    $\begin{pmatrix}
\bar{A}&\bar{B}&\bar{C}&D\\
D&\bar{A}&\bar{C}&\bar{B}
\end{pmatrix} 
 \rightarrow 
\begin{pmatrix}
\bar{A}&\bar{D}&\bar{B}&\bar{C}\\
\bar{D}&\bar{A}&\bar{C}&\bar{B}
\end{pmatrix}$  
 \\ [30pt]

{$(-1, -1)$
}& $\begin{pmatrix}
\bar{A}&\bar{B}&\bar{C}&\bar{D}\\
\bar{D}&\bar{A}&\bar{C}&\bar{B}
\end{pmatrix} 

\rightarrow
\begin{pmatrix}
\bar{A}&B&\bar{C}&\bar{D}\\
B&\bar{D}&\bar{A}&\bar{C}
\end{pmatrix}$

     &  $\begin{pmatrix}
\bar{A}&\bar{B}&\bar{C}&\bar{D}\\
\bar{D}&\bar{A}&\bar{C}&\bar{B}
\end{pmatrix} 
 \rightarrow 
\begin{pmatrix}
\bar{A}&D&\bar{B}&\bar{C}\\
D&\bar{A}&\bar{C}&\bar{B}
\end{pmatrix}$

  \end{tabular}
  \caption{Eight combinatorial cases of Rauzy induction for IETFs depending on the relation between the lengths of two "end intervals" but also on the fact which of these two intervals are flipped. The cases are represented on examples of $F \in \IETFfour$. The data $(\sigma, \boldsymbol{k}', \boldsymbol{\lambda}')$ of $F'=\mathcal{R} F$ is decribed. The lenghts $\lambda_i'=\lambda_i$ for all $i$, except that of the winner. The winner's length is diminished by the loser's length. The permutation and the vector of flips change as shown on the examples of this diagram: if the winner is not flipped, the combinatorics of $\sigma'$ is the same as in the case of standard Rauzy induction. If the winner is flipped, the combinatorics changes. If $\lambda_B=\lambda_D$, $\mathcal{R}F=\square$.}\label{table:how_works}
\end{table}

The Rauzy induction has been used extensively in the last $50$ years as a powerful tool that unites combinatorics and linear algebra, see for example \cite{AR91, AHS16, BL, KZ03, ST18}. Rauzy induction helps to understand the dynamics of IETs, as one can see for example, with this

\begin{lemma}\cite{R79, K75, N89}\label{lemma:minimality}
The map $F \in \IETFn$ is minimal if and only if the modified Rauzy induction never stops, and the vector of the lengths of the intervals $\lambda^{(m)}$ obtained after $m$ iterations of the MRI, tends to zero:
$$
||\lambda^{(m)}||_{\infty}= \max_{i \in \mathcal{A}} \lambda^{(m)}_i \rightarrow_{m \rightarrow \infty} 0.
$$
\end{lemma}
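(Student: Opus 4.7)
The plan is to prove the two directions separately, using the Rokhlin tower structure provided by modified Rauzy induction.

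For the direction $F$ minimal $\implies$ MRI never stops and $\|\lambda^{(m)}\|_\infty \to 0$: I first invoke the classical fact that the first return map of a minimal system to a positive-measure subinterval is itself minimal. Each induced map $F^{(m)}$ is, by construction, the first return of $F$ to $I^{(m)}$, hence minimal. If MRI were to stop at step $m$, then $\alpha^{\ttop,(m)}_{n-1} = \alpha^{\bbot,(m)}_{n-1}$, which forces the two rightmost partition intervals of $F^{(m)}$ to coincide as subsets of $I^{(m)}$. A direct inspection of Definition \ref{def:MRI} together with the resulting image/preimage pattern then yields either a proper $F^{(m)}$-invariant subinterval or a reducible generalized permutation in the sense of Definition \ref{def:irreducible}; either outcome contradicts minimality of $F^{(m)}$. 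For the length statement, each coordinate $\lambda_i^{(m)}$ is non-increasing in $m$, so $\lambda_i^{(\infty)} := \lim_m \lambda_i^{(m)}$ exists. Assuming some $\lambda_i^{(\infty)}>0$, the intersection $I^{(\infty)} := \bigcap_m I^{(m)}$ has positive length, and the first return of $F$ to $I^{(\infty)}$ is a well-defined IETF. Using finiteness of $S_n^{\A}$ together with the fact that the losers' lengths sum to the finite quantity $\lambda - |I^{(\infty)}|$, I would extract a combinatorial stabilisation of the data and then produce an $F^{(m)}$-invariant proper subinterval for some large $m$, contradicting the minimality of $F^{(m)}$ established above.

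For the reverse direction, assume MRI never stops and $\|\lambda^{(m)}\|_\infty \to 0$. The MRI provides the Rokhlin tower decomposition
$$[0,\lambda) = \bigsqcup_{i \in \A} \bigsqcup_{j=0}^{h^{(m)}_i - 1} F^j\bigl(I_i^{\ttop,(m)}\bigr),$$
where each tower level is an interval (possibly flipped) of length at most $\|\lambda^{(m)}\|_\infty$. Take any open subinterval $J \subset [0,\lambda)$ and any point $x \in [0,\lambda)$; I wish to show that the $F$-orbit of $x$ enters $J$. Choose $m$ so that $\|\lambda^{(m)}\|_\infty < |J|/3$; the partition of $[0,\lambda)$ by tower levels, each shorter than $|J|/3$, forces $J$ to contain at least one complete tower level $F^{j_0}(I_{i_0}^{\ttop,(m)})$. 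Thus the problem reduces to proving that the $F$-orbit of $x$ enters the base $I_{i_0}^{\ttop,(m)}$.

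This last reduction is the main technical obstacle. Every orbit of $F$ returns to $I^{(m)}$ (the common base of the towers), and successive returns realise iterates of $F^{(m)}$; so the problem further reduces to showing that orbits of $F^{(m)}$ visit every labelled base in its partition of $I^{(m)}$. This is a combinatorial irreducibility statement on the Rauzy graph of $F^{(m)}$: I would argue by contradiction that if some label were never reached, then after finitely many additional MRI steps the induced generalized permutation would become reducible in the sense of Definition \ref{def:irreducible}, which in turn produces an invariant proper subinterval and forces MRI to stop, contradicting the hypothesis. Verifying this irreducibility statement in the flipped setting is the principal technical input beyond the classical IET case; it is the analogue, for IETFs, of the corresponding step in Keane's analysis and is provided by the framework of \cite{N89}.
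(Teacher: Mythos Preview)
The paper does not supply its own proof of this lemma: it is stated with the citations \cite{R79, K75, N89} and then used as a tool, so there is no argument in the text against which to compare yours. Your sketch follows the standard Rokhlin-tower strategy that underlies those references and is sound in outline.

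One point in your reverse direction deserves tightening. You write that if some label were never reached by orbits of $F^{(m)}$ then the induced permutation would become reducible and this ``forces MRI to stop''. Reducibility by itself does \emph{not} force $\mathcal{R}F^{(m')}=\square$: the rightmost top and bottom intervals can still have different lengths, and MRI happily continues inside the right block. The correct contradiction is with the \emph{other} hypothesis: once $\sigma^{(m)}$ is reducible, the labels lying in the left block are never again winner or loser (MRI only touches the two rightmost intervals), so their coordinates $\lambda_i^{(m')}$ are constant for all $m'\ge m$, and hence $\|\lambda^{(m')}\|_\infty$ cannot tend to $0$. The same observation also cleans up your forward-direction argument for $\|\lambda^{(m)}\|_\infty\to 0$: if some $\lambda_i^{(m)}$ stays bounded below, then label $i$ is a winner only finitely often, and one argues (using finiteness of combinatorial types) that eventually $i$ is frozen in a left block, i.e.\ the permutation becomes reducible, giving a proper $F^{(m)}$-invariant subinterval and contradicting minimality.
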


\subsection{Rauzy graphs: constructions, definitions, algorithms.}
Associated to the Rauzy induction on the space of $n$-IETs, one can define a combinatorial object which is called the Rauzy graph. Rauzy graphs were studied in a lot of detail for standard Rauzy induction \cite{KZ03, DM17}. In this section, we define the Rauzy graphs for the modified Rauzy induction in an analogical way.  The proof of our main result (Theorem \ref{thm:intro_main_theorem}) is based on one invariant of modified Rauzy graphs that we stumbled upon, and we believe that many more are to be discovered. The full understanding of modified Rauzy graphs is very far from being achieved.

Fix an alphabet $\A$ with $n$ letters. Denote $Y^X$ the set of maps from $X$ to $Y$, for any two finite sets $X$ and $Y$. Then any vector $\boldsymbol{k} \in \{1,-1\}^n \setminus\boldsymbol{\bar{1}}$ with coordinates labeled by the elements of $\A$, can be seen as an element $\textbf{k} \in {\left(\{-1,1\}^n \setminus \one\right)}^{\A}$.
Also note that $\sigma \in S_n^{\A}$ is nothing else than a pair of bijective maps $\sigma^{\ttop}, \sigma^{\bbot}: \mathcal{A} \rightarrow \{1, \ldots, n\}$.
\begin{definition}
Fix an alphabet $\A$ with $n$ letters. Consider a set of all possibilities of combinatorial data of maps in $\IETFn$:
\begin{equation*}
V:= \left\{(\sigma, \boldsymbol{k}) \left| \right.
\sigma \in S_n^{\mathcal{A}}, \textbf{k} \in {(\{-1,1\}^n \setminus \one)}^{\A}
\right\}.
\end{equation*}
We call this set the \emph{set of Rauzy classes of interval exchange transformations with flips}. Each element of this set is called a \emph{Rauzy class}.
\end{definition}

As we have seen above in Lemma \ref{lemma:minimality}, the minimality of a map $F \in \IETFn$ can be formulated in terms of the modified Rauzy induction. The iterations $\{\mathcal{R}^n F\}$ define a path $(\sigma_n, \boldsymbol{k}_n, \boldsymbol{\lambda}^{(n)})$ in the modified Rauzy graph $G=(E,V)$.  Suppose that this (possibly infinite) path $\gamma$ follows the sequence of edges $e_1, \ldots, e_{m}, \ldots, e_j \in E$. To any edge $e \in E$ one associates a linear \emph{non-negative} matrix $A_e$ corresponding to the inverse transformation of the lengths in a following way. If the edge $e$ corresponds to the induction step where $\lambda^{(m)}_i>\lambda^{(m)}_j, i \neq j$ with $ i,j \in \mathcal{A}$ then $A_e:=E+E_{ij}$. Here $E_{ij}$ is a matrix with all elements equal to zero except one in the $i$-th row and $j$-th column which is equal to $1$. Let $A_{(m)}:=A_{e_1} \ldots A_{e_m}$. Then the lengths of the intervals of continuity for an IET corresponding to the $m$-step of the Rauzy induction are given by $\lambda^{(m)}=A_{(m)}^{-1} \lambda$ since $\lambda^{(m)}= A_{e_m}^{-1} \ldots A_{e_1}^{-1} \lambda$, by definition. We will study these products more attentively in the following Section. 

Now let us give some more combinatorial definitions.

\begin{definition}
The \emph{modified Rauzy graph} or \emph{modified Rauzy diagram} is a finite oriented graph $G=(V,E)$ with the set of vertices being the set of Rauzy classes of IETs. For the edges, $e=(v,w) \in V \times V \in E$ if by one step of Rauzy induction one can pass from the combinatorial data $v$ to the combinatorial data $w$. 
\end{definition}

Since we will be working only with such graphs, we will sometimes omit the world modified, and call these graphs simply Rauzy graphs. Note that such graphs (for different values of $n$) are not necessarily connected. The number of outcoming edges from each vertex belongs to the set $\{0,2\}$ as the number of incoming arrows belongs to the set $\{0,1,2\}$.  Let us define the following

\begin{definition}[{\emph{\textbf{Equivalence relationship on the set of Rauzy classes}}}]\label{def:equivalence_relationship}
Two vertices $v_1,v_2 \in V$ are \emph{equivalent} ($v_1 \sim v_2$) if for $v_1=(\sigma_1, \boldsymbol{k}_1)$ and $v_2=(\sigma_2, \boldsymbol{k}_2)$ there exists a bijective map $h: \A \rightarrow A$ such that
\begin{align*}
\sigma^{\ttop}_1=\sigma^{\ttop}_2 \circ h \\
\sigma^{\bbot}_1=\sigma^{\bbot}_2 \circ h \\
\boldsymbol{k}_1=\boldsymbol{k}_2 \circ h.
\end{align*}
The set of equivalent vertices to a vertex $v_1$ will be denoted $[v_1].$
If one wants to precise the map $h$, one also writes $v_1 \sim_h v_2$.
\end{definition}

Obviously, this is a well defined equivalence relationship. Note that for two vertices $v_1, v_2 \in G$ the bijection $h: \A \rightarrow \A$, if it exists, is uniquely defined. We consider the quotients of the Rauzy graphs with respect to this relationship, defined in a following way.

\begin{definition}
The \emph{quotient Rauzy graph} (or, simply, the \emph{quotient graph}) is a finite oriented graph $G=(V,E)$ with the vertices being the equivalence classes of Rauzy classes of IETF with respect to the equivalence relationship $\sim$. Two vertices $[v_1], [v_2]$ are connected by an edge $e \in G$ if there exist representatives of Rauzy classes $v_1 \in [v_1], v_2 \in [v_2]$ as well as a map $h: \A \rightarrow \A$ such that $v_1 \sim_h v_2$. Moreover, the quotient Rauzy graph comes with a labeling map $\mathcal{L}: E \rightarrow H$, where $H$ is a set of bijections $h: \A \rightarrow \A$. This labeling map is defined as follows: $e \mapsto h$, i.e. the edges of the quotient Rauzy graph are labeled by the permutations of labels.
\end{definition}

The quotient Rauzy graphs have a much smaller number of vertices than Rauzy graphs (indeed, the class $[v_1]$ contains $n!$ Rauzy classes). Although, they carry all the additional information contained in the Rauzy graph in the labelling of the edges between the vertices.

\begin{center}
\textbf{How the pictures of Rauzy graphs are drawn in this article.}
\end{center}

For this project, we collaborated with Paul Mercat, who has written a code in Sage that draws Rauzy graphs and quotient Rauzy graphs. For all the pictures of these graphs, here are our assumptions on the graphical representation that we have chosen. 

\begin{itemize}
\item[A1.] For the sake of the economy of place, we draw quotient Rauzy graphs instead of the Rauzy graphs.
\item[A2.] Sometimes only the connected components of the quotient Rauzy graphs are drawn.
\item[A3.] For the vertices of quotient graphs, each class of equivalence relationship $\sim$ is represented by one of the vertices in this class, i.e by some generalized permutation $\sigma \in S_n^{\A}$. Of course, such a graphic representation of the quotient Rauzy graph is not unique because it depends on the choice of the representatives for each class.
\item[A4.] The bars are put on the letters of $\sigma$ that correspond to flipped intervals. The letters that correspond to the intervals which are not flipped, are represented by a green color. 
\item[A5.] Since we are mostly interested in the cycles in the Rauzy graphs, we do not draw the vertices $[\sigma], \sigma \in S_n^{\A}$ for which  $(\sigma^{\ttop})^{-1}(n)=(\sigma^{\bbot})^{-1}(n)$. Indeed, for any $F \in \IETFn$ represented by such a vertex, obviously, $\mathcal{R} F = \square$.
\item[A6.] On each edge in the quotient Rauzy graph, we write the labels of the winner and the loser for the Rauzy induction on the corresponding element of the equivalence class. The winner and the loser are marked (e.g. $C>D$ for $I_C$ being the winner and $I_D$ being the loser). 
\item[A7.] The arrows in the quotient Rauzy graph are marked by a labeling map $\mathcal{L}$. In our pictures, the arrows marked by the identity map, are drawn in a standard way. Meanwhile, any arrow $e$ marked by  $h=\mathcal{L}(e) \neq \mathrm{id}$ is dotted. Of course, the map $h$ can be reconstructed explicitely if one knows the connected vertices $[v_1], [v_2]$ (and their representatives), as well as the labels of the winner and the loser.
\end{itemize}

\begin{example}
On Figure \ref{pic:graph_for_four} the reader can see the connected component of the permutation 
\begin{equation}\label{eq:basic_combinatorial_type_CET_3}
\sigma=\begin{pmatrix}
\bar{A}&\bar{B}&\bar{C}&\bar{D}\\
\bar{B}&\bar{D}&\bar{A}&\bar{C}
\end{pmatrix}
\end{equation}
in the quotient Rauzy graph for $\IETFfour$. 
After applying the Rauzy induction to $\sigma$, one gets:
\begin{equation*}
\sigma'=\mathcal{R}_{C<D} \sigma=\begin{pmatrix}
\bar{A}&\bar{B}&{C}&\bar{D}\\
\bar{B}&C&\bar{D}&\bar{A}
\end{pmatrix}, \; \; \;  \; \; \; \sigma''=\mathcal{R}_{C>D} \sigma=\begin{pmatrix}
\bar{A}&\bar{B}&D&\bar{C}\\
\bar{B}&{D}&\bar{A}&\bar{C}
\end{pmatrix}.
\end{equation*}
 Only on one of these two permutations, $\sigma'$, the Rauzy induction can be continued. The class $[\sigma'']$ is hence not included into the graph on the picture, as in A5. The presented strongly connected component has two dotted arrows. For example,
for the permutation 
\begin{equation*}
\delta=\begin{pmatrix}
D&\bar{A}&\bar{B}&\bar{C}\\
\bar{A}&\bar{C}&D&\bar{B}
\end{pmatrix},
\end{equation*}
the combinatorial Rauzy induction leads to
\begin{equation*}
\delta'=\mathcal{R}_{B<C} \delta=\begin{pmatrix}
D&\bar{A}&{B}&\bar{C}\\
\bar{A}&B&\bar{C}&D
\end{pmatrix}.
\end{equation*}
One can see that $\delta'\sim_h \tilde{\delta'}$, where $\tilde{\delta'}=
\begin{pmatrix}
D&\bar{A}&{B}&\bar{C}\\
\bar{A}&B&\bar{C}&D
\end{pmatrix}
$ via the map $h: \A \rightarrow \A$ such that $h(A)=B, h(B)=C, h(C)=A, h(D)=D$. Hence the edge $e=(\delta,\tilde{\delta'})$ is dotted, see A7.

\begin{figure}
\includegraphics[scale=0.7]{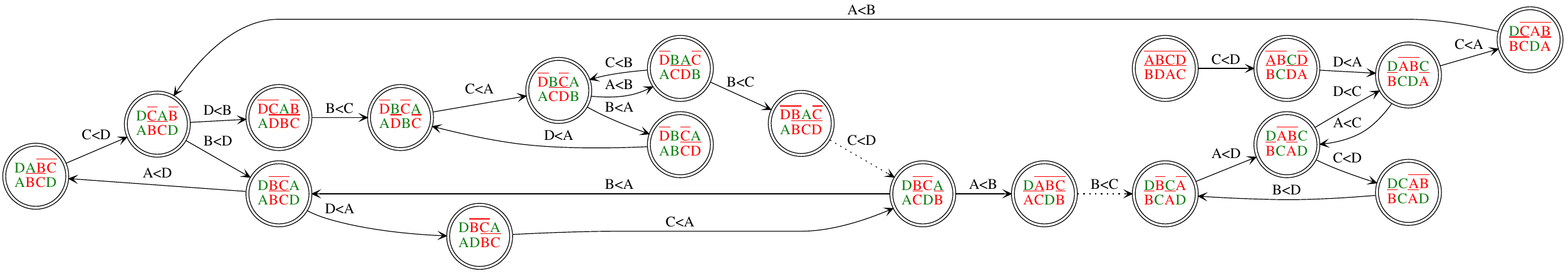}
\caption{A graphic representation of a connected component of the permutation $\sigma$ defined by \eqref{eq:basic_combinatorial_type_CET_3} in the quotient Rauzy graph.}
\label{pic:graph_for_four}
\end{figure}
\end{example}

\section{Necessary condition for minimality in $\CETthree$.}\label{sec:one-half}
In this Section we will prove the following

\begin{theorem}\label{thm:first_ingredient}
If $F \in \CETthree$ is minimal then $\tau=\frac{1}{2}$.
\end{theorem}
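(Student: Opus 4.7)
The plan is to use the modified Rauzy induction on $F\in\CETthree$ once it is realised as an $\mathrm{IETF}^4$ on the interval. I first cut $\Sph^1$ at the origin: since each image $F(I_i)$ is obtained by flipping $I_i$ and shifting by $\tau$, for generic parameters exactly one image $F(I_{j_0})$ straddles the point $0$, so the interval $I_{j_0}$ gets split into two sub-intervals when we pass to $[0,1)$. The resulting fully-flipped permutation on four letters lies in the strongly connected component of the quotient Rauzy graph drawn in Figure \ref{pic:graph_for_four}, and its four interval lengths $(\lambda_A,\lambda_B,\lambda_C,\lambda_D)$ are affine functions of $(l_1,l_2,l_3,\tau)$; the sole $\tau$-dependence is confined to the lengths of the two pieces of $I_{j_0}$, which are interchanged under the substitution $\tau\leftrightarrow 1-\tau$.

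By Lemma \ref{lemma:minimality}, minimality of $F$ is equivalent to the modified Rauzy induction never halting. Inspecting Figure \ref{pic:graph_for_four}, from each vertex of the strongly connected component exactly one of the two outgoing Rauzy edges returns to the component; the other edge produces a permutation whose top-last and bottom-last labels coincide, so the induction halts immediately (this is the dead-end that was pruned out of the quotient graph by convention A5, as illustrated in the paper's example with $\sigma''$). Hence minimality forces the induction trajectory to take the \emph{alive} branch at every step, which is an infinite sequence of strict inequalities on the current length vector.

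Each alive step multiplies the length vector by a non-negative integer matrix $A_e=E+E_{ij}$ (see the discussion preceding Definition \ref{def:MRI}), so the set of length vectors compatible with the infinite alive sequence is the nested intersection of convex cones obtained by the successive matrices $A_{e_k}^{-1}$. Computing the matrix product for one full traversal of a simple cycle of the strongly connected component should show this intersection collapses onto a codimension-one linear subspace; pulled back through the affine change of variables from $(\lambda_A,\lambda_B,\lambda_C,\lambda_D)$ to $(l_1,l_2,l_3,\tau)$, this subspace is claimed to be exactly $\{\tau=\tfrac{1}{2}\}$. A useful consistency check is that at $\tau=\tfrac{1}{2}$ the circle involution $x\mapsto -x\bmod 1$ interchanges the two pieces of $F(I_{j_0})$ and conjugates $F$ to itself, so the hyperplane $\tau=\tfrac{1}{2}$ is automatically stabilised by the Rauzy dynamics; the substantive content of the proof is therefore showing that no strictly smaller subspace is cut out.

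The main obstacle is this last verification. Concretely, one must compute the left-invariant linear functional of the product of the matrices $A_e$ along at least one simple cycle of Figure \ref{pic:graph_for_four} and check that, after the affine change of coordinates back to the original parameters, this functional is proportional to $\tau-\tfrac{1}{2}$. Because the alive sequence is deterministic on each strongly-connected vertex (only one of the two outgoing edges stays in the component), the verification reduces to a finite combinatorial computation rather than an infinite averaging argument.
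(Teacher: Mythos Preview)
Your outline has a genuine gap at its core. The claim that ``from each vertex of the strongly connected component exactly one of the two outgoing Rauzy edges returns to the component'' is false: the quotient component in Figure~\ref{pic:graph_for_four} has vertices with \emph{both} outgoing edges staying inside, so the infinite Rauzy path of a minimal $F$ is \emph{not} determined by the combinatorics alone --- it genuinely branches according to the length data. This is exactly why Lemma~\ref{lemma:main_vector_preserved_lemma} is non-trivial (the paper calls it ``miraculous'' and verifies it by computer): if the alive branch were unique, path-independence of $A_\gamma^T v^\perp$ would be vacuous. Your final sentence (``the verification reduces to a finite combinatorial computation'') is therefore built on a false premise.

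There is also a conceptual mismatch in the cone argument. Even if the path \emph{were} deterministic, the nested intersection $\bigcap_m A_{(m)}\R_+^4$ would contract to a single ray, not to a hyperplane; different infinite paths give different rays, and what you actually need is that \emph{all} of them lie in the hyperplane $\{\lambda_A+\lambda_C=\lambda_B+\lambda_D\}$ (equivalently $\tau=\tfrac12$). The set of minimal length vectors is not a linear subspace at all --- it is the Rauzy gasket sitting inside that hyperplane --- so ``collapses onto a codimension-one subspace'' is the wrong picture. The paper's argument sidesteps this entirely: it singles out the normal vector $v^\perp=(1,-1,1,-1)^T$, uses Lemma~\ref{lemma:main_vector_preserved_lemma} to show that $A_\gamma^T v^\perp$ depends only on the endpoint vertex, and then observes that for any vertex $\tilde\sigma$ visited infinitely often the scalar $\langle A_{\tilde\sigma}v^\perp,\lambda^{(k_m)}\rangle$ is both constant in $m$ and tends to $0$ (since $\|\lambda^{(k_m)}\|_\infty\to 0$ by Lemma~\ref{lemma:minimality}), hence vanishes. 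Pulling back gives $\langle v^\perp,\lambda\rangle=0$, i.e.\ $\tau=\tfrac12$. What you are missing is precisely this left-invariant linear functional and the (computer-checked) fact that it is respected by \emph{every} path in the component, not just one cycle.
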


We will see in the following that this theorem is an important ingredient in order to qualify the non-linearly escaping behavior in triangle tiling billiards (Theorem \ref{thm:when_escape}).

\subsection{The existence of the invariant hyperplane $\tau=\frac{1}{2}$.}

In this paragraph we explain why the hyperspace $\tau=\frac{1}{2}$ is invariant under Rauzy induction in the families of $\CETn$.

Let $F \in \CETn$ acting on $\mathbb{S}^1$ and $X_F$ be the cylinder $\mathbb{S}^1 \times (-1,1)$ with the following identifications on the horizontal boundaries:
$(x,1)$ is identified with $(F(x),-1)$. In plain terms, on each interval of continuity of $F$, an interval and its image are identified by flip.
$X_F$ is a non orientable compact surface.  The vertical flow is well-defined on $X_F$. It means that we consider an orientable foliation on a non orientable surface. By definition, the first return time on  $\mathbb{S}^1 \times \{ 1\}$ of this flow is the map $F$.

When $\tau = 1/2$, the surface $X_F$ possesses an additional symmetry. It is invariant by the anti-holomorphic map 
$\phi: (x,y) \to (x+1/2[1], -y)$. An elementary calculation shows that the quotient is a projective plane (see Figure \ref{surfaces}).

\begin{figure}[h]
\centering
\begin{tikzpicture}
\draw[magenta, very thick, ->](0,0)--(2,0);
\draw[yellow, very thick, ->](2,0)--(6,0);
\draw[cyan, very thick, ->](6,0)--(7,0);

\draw[magenta, very thick, <-](3.5,-2)--(5.5,-2);
\draw[yellow, very thick, <-](5.5,-2)--(7,-2);
\draw[yellow, very thick](0,-2)--(2.5,-2);
\draw[cyan, very thick, <-](2.5,-2)--(3.5,-2);

\draw[ very thick](0,0)--(0,-2);
\draw[ very thick](7,0)--(7,-2);

\draw(1,0.5) node{A};
\draw(4,0.5) node{B};
\draw(6.5,0.5) node{C};

\draw(4.5,-2.5) node{A};
\draw(0.5,-2.5) node{B};
\draw(3,-2.5) node{C};

\draw[magenta, very thick, ->](0,-4)--(1,-4);
\draw[magenta, very thick, dashed, <-](1,-4)--(2,-4);
\draw[yellow, very thick, ->](2,-4)--(4,-4);
\draw[yellow, very thick, dashed, <-](4,-4)--(6,-4);
\draw[cyan, very thick, ->](6,-4)--(6.5,-4);
\draw[cyan, very thick, dashed, <-](6.5,-4)--(7,-4);

\draw[ very thick](0,-4)--(0,-5);
\draw[ very thick](7,-4)--(7,-5);

\draw[green, very thick, ->](0,-5)--(3.5,-5);
\draw[green, very thick, dashed,->](3.5,-5)--(7,-5);
\end{tikzpicture}
\caption{The surface $X_F$ and its quotient by the involution $\phi$.}
\label{surfaces}
\end{figure}

The vertical flow descends to the quotient as a foliation. Since the action of Rauzy induction on $X_F$ is realized as a cut and paste of rectangles with horizontal and vertical sides, it preserves the symmetry $\phi$.  Therefore, Rauzy induction preserves the hypersurface $\tau = 1/2$.

Unfortunately, this geometric remark doesn't suffice to prove Theorem 
\ref{thm:first_ingredient}. We need one extra combinatorial remark that is based on the study of the invariants in the modified Rauzy graphs of IETFs. We hope that in the future we will be able to understand the geometric meaning of the invariant that we describe in the next paragraph.

\subsection{The combinatorial proof.}\label{subs:combinatorial_proof}

\begin{lemma}\label{lemma:simple_combinatorics_lemma}
If $F \in \CETthree$ is minimal hence its combinatorics as a map in $\mathrm{FET}^{4}[0,1)$ where $0$ corresponds to the end of one of the intervals of continuity, is encoded with a generalized permutation $\sigma$, defined by \eqref{eq:basic_combinatorial_type_CET_3}.
\end{lemma}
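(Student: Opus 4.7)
The plan is a direct case analysis on which of the three image arcs $F(I_1), F(I_2), F(I_3)$ contains the marked point $0$ in its interior. For $F \in \CETthree$, the three intervals of continuity $I_1, I_2, I_3$ on $\Sph^1$ (of lengths $l_1, l_2, l_3$) are each flipped, and the resulting image arcs tile the circle. Viewing $F$ as an element of $\mathrm{FET}^4[0,1)$, the arc containing $0$ in its interior is split in two, and the corresponding circle interval $I_j$ is cut at $F^{-1}(0)$ into two adjacent pieces on $[0,1)$. All four resulting top intervals are flipped, so the bar over every letter of the generalized permutation is automatic and only the bottom order remains to be determined.

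I would label the top intervals $A,B,C,D$ from left to right on $[0,1)$ so that the two pieces of $I_j$ occupy two consecutive top positions, while the two uncut circle intervals occupy the remaining slots in their cyclic order on the circle. A short computation — writing $F$ on $I_j$ as a linear function, locating $F^{-1}(0)$ inside $I_j$, and reducing modulo $1$ at the end — identifies which piece of $F(I_j)$ on $[0,1)$ lies near $0$ and which lies near $1$. Assembling the four images gives three possible bottom orders, one per case: cutting $F(I_1)$ yields $(A,C,D,B)$, cutting $F(I_2)$ yields $(B,D,A,C)$, and cutting $F(I_3)$ yields $(C,A,B,D)$.

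Among these, $(A,C,D,B)$ is reducible because its length-$1$ prefix equals $\{A\}$, and $(C,A,B,D)$ is reducible because its length-$3$ prefix equals $\{A,B,C\}$; in either case the corresponding IETF preserves a proper initial sub-interval of $[0,1)$ and cannot be minimal. A direct check of the length-$1$, length-$2$ and length-$3$ prefixes of $(B,D,A,C)$ shows it is irreducible. Since minimality of $F$ forces irreducibility of the associated permutation, only the middle case can occur, and the resulting combinatorics is exactly the permutation displayed in \eqref{eq:basic_combinatorial_type_CET_3}.

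The main place where I expect to be careful is the bookkeeping of orientations around the wrap-around at $0$: because $I_j$ is flipped, $F$ is orientation-reversing on the $\R$-lift, but once $F(I_j)$ spans $0$ and is cut, the left/right order on $[0,1)$ no longer corresponds to the left/right order on the circle in the naive way, and one has to track where the extremal points of $I_j$ actually land on $[0,1)$. The degenerate values of $\tau$ at which $0$ coincides with a boundary between two image arcs produce a $3$-IETF rather than a $4$-IETF; these form a measure-zero subset of parameters and cannot arise for minimal $F$ (for example, $\tau=0$ makes $F$ an involution), so the three cases above exhaust the possibilities under the hypothesis of minimality.
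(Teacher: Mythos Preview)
Your proposal is correct and follows essentially the same route as the paper: a three-case split according to which image arc $F(I_j)$ contains the marked point $0$, yielding the three bottom orders $(A,C,D,B)$, $(B,D,A,C)$, $(C,A,B,D)$, of which only the middle one survives. The only cosmetic difference is that the paper rules out the two bad cases dynamically---observing that $0\in F(I_a)$ or $0\in F(I_c)$ forces $\mathring{F(I_j)}\cap\mathring{I_j}\neq\emptyset$, hence an open interval of $2$-periodic points---whereas you rule them out combinatorially via reducibility; these are the same obstruction, since on a single flipped invariant block $F$ is an involution.
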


\begin{proof}
If $I_j'=\mathring{F(I_j)} \cap \mathring{I_j} \neq \emptyset$ for some $j \in \left\{a,b,c\right\}$, $F$ is obviously not minimal since it has an open interval of $2$-periodic points. Hence, if $0$ is a left singularity of the interval $I_a$ (as in Definition \ref{def:fully_flpped_3_interval_exchange_transformations_on_the_circle:4}) and $F$ is minimal then $0 \in F(I_b)$, see Figure \ref{fig:combinatoric_type}.
\end{proof}

\begin{remark}
One can explicitely write out all the possible combinatorial possibilities for the generalized permutations defining the maps in $\CETthree$, only the first one of them being irreducible:
\begin{equation}\label{eq:choice}
\begin{pmatrix}
\bar{A}&\bar{B}&\bar{C}&\bar{D}\\
\bar{B}&\bar{D}&\bar{A}&\bar{C}
\end{pmatrix};
\begin{pmatrix}
\bar{A}&\bar{B}&\bar{C}&\bar{D}\\
\bar{A}&\bar{C}&\bar{D}&\bar{B}
\end{pmatrix};
\begin{pmatrix}
\bar{A}&\bar{B}&\bar{C}&\bar{D}\\
\bar{C}&\bar{A}&\bar{B}&\bar{D}
\end{pmatrix}.
\end{equation} 
\end{remark}

From Lemma \ref{lemma:minimality} and Lemma \ref{lemma:simple_combinatorics_lemma} we see that the understanding of the minimal maps in $\CETthree$ boils down to the understanding of the cycles of Rauzy induction in the connected component of the Rauzy graph of $\sigma$. This connected component is the main hero of this Section, its full representation is very big although can be drawn explicitely which was done by Paul Mercat.\footnote{Here is the address : \url{https://drive.google.com/file/d/1JlUxXyWcO0izTZtu9CiJl13eannz_W5I/view?usp=sharing}} Although, a representation of this component in the quotient graph has only $19$ (irreducible) vertices and is sufficient for our needs. It is given on the Figure \ref{pic:graph_for_four}.


\bigskip

Note that the Rauzy induction can be defined as a map on $\IETFn[0,1)$ by renormalizing the interval of the definition of $\mathcal{R}F$ in order for it to have length $1$. Then, the lengths of the intervals of continuity can be parametrized by homogeneous coordinates
$[\lambda_A:\lambda_B:\lambda_C:\lambda_D] \in \PP^3$. In this case, for $F_{\tau, l_1, l_2, l_3} \in \CETthree$ with the combinatorics $\sigma$ defined by \ref{eq:basic_combinatorial_type_CET_3}, the lengths of the intervals $\{\lambda_i\}_{i \in \A}$ can be described in terms of the parameters $l_1, l_2, l_3, \tau$:
\begin{equation*}
\begin{pmatrix} 
\lambda_A\\
\lambda_B\\
\lambda_C\\
\lambda_D
\end{pmatrix}=
\begin{pmatrix}
1&0&0&0\\
0&0&-1&1\\
0&1&1&-1\\
0&0&1&0
\end{pmatrix}
\begin{pmatrix}
l_1\\
l_2\\
l_3\\
\tau
\end{pmatrix}.
\end{equation*}

The equation $\tau=\frac{1}{2}$ (or, $2\tau=l_1+l_2+l_3$), in the homogeneous coordinates $\{\lambda_i\}_{i \in \A}$ can be written as 
\begin{equation*}
\lambda_A+\lambda_C=\lambda_B+\lambda_D.
\end{equation*}

This equation defines a hyperplane $H \subset \PP^3$ with an orthogonal vector $v^{\perp}:=(1,-1,1,-1)^T$. It happens that the vector $v^{\perp}$ is invariant under the Rauzy transformations.

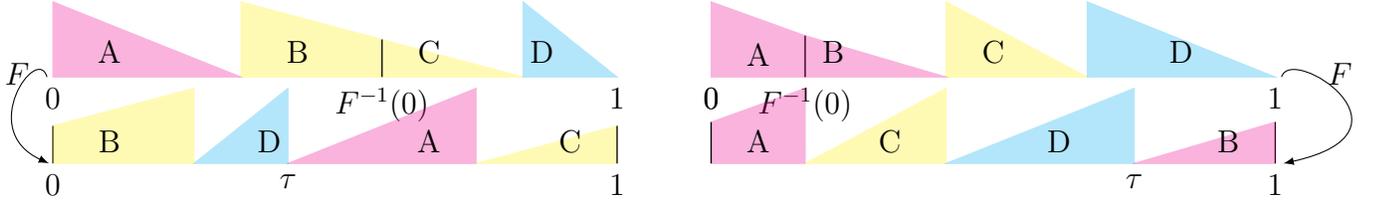
\begin{figure}
\centering
\begin{tikzpicture}[xscale=12.5]
\path[draw] (0,0) node[below] (1) {$0$} -- cycle;

\path[draw] (0.6,0) node[below]{$1$} -- cycle;
\path[draw, fill=magenta,magenta, opacity=0.3] (0,0)--(0,1)--(0.2,0)--cycle;
\path[draw] (0.06,0.05) node[above]{A} -- cycle;

\path[draw, fill=yellow,yellow, opacity=0.3] (0.2,0)--(.2,1)--(0.5,0)--cycle;
\path[draw] (0.26,0.05) node[above]{B} -- cycle;
\path[draw] (0.4,0.05) node[above]{C} -- cycle;

\path[draw=black, line width=0.5] (0.35,0) node[below]{$F^{-1}(0)$}--(0.35,.5) -- cycle;

\path[draw, fill=cyan,cyan, opacity=0.3] (0.5,0)--(.5,1)--(0.6, 0)--cycle;
\path[draw] (0.52,0.05) node[above]{D} -- cycle;
%

\path[draw] (0.7,0) node[below]{$0$} -- cycle;

\path[draw, fill=magenta,magenta, opacity=0.3] (0.7,0)--(0.8,0)--(0.8,0.55)--(0.7,1)--cycle;
\path[draw] (0.75,0) node[above]{A} -- cycle;

\path[draw=black, line width=0.5] (0.8,0) node[below]{$F^{-1}(0)$}--(0.8,.55) -- cycle;

\path[draw, fill=magenta,magenta, opacity=0.3] (0.8,0)--(.8,.55)--(0.95,0)--cycle;
\path[draw] (0.83,0.05) node[above]{B} -- cycle;

\path[draw, fill=yellow,yellow, opacity=0.3] (0.95,0)--(.95,1)--(1.1,0)--cycle;
\path[draw] (1,0.05) node[above]{C} -- cycle;

\path[draw, fill=cyan,cyan, opacity=0.3] (1.1,0)--(1.1,1)--(1.3, 0)--cycle;
\path[draw] (1.2,0.05) node[above]{D} -- cycle;

\path[draw] (1.3,0) node[below] (2) {$1$} -- cycle;


\path[draw=black, line width=0.5] (0,-1.15) node[below] (3){$0$}--(0,-0.65) -- cycle;

\path[draw, fill=yellow,yellow, opacity=0.3] (0,-1.15)--(0,-0.65)--(.15,-0.15)--(0.15,-1.15)--cycle;
\path[draw] (0.06,-1.15) node[above]{B} -- cycle;

\path[draw, fill=cyan,cyan, opacity=0.3] (0.15,-1.15)--(.25,-1.15)--(0.25, -0.15)--cycle;
\path[draw] (0.23,-1.15) node[above]{D} -- cycle;
%
\path[draw] (0.25,-1.15) node[below]{$\tau$} -- cycle;

\path[draw, fill=magenta,magenta, opacity=0.3] (0.25,-1.15)--(0.45,-1.15)--(0.45,-0.15)--cycle;
\path[draw] (0.4,-1.15) node[above]{A} -- cycle;

\path[draw, fill=yellow,yellow, opacity=0.3] (0.45,-1.15)--(0.6,-1.15)--(0.6,-0.65)--cycle;
\path[draw] (0.55,-1.15) node[above]{C} -- cycle;
\path[draw=black, line width=0.5] (0.6,-1.15)--(0.6,-0.65) -- cycle;

\path[draw=black, line width=0.5] (0.6,-1.15) node[below]{$1$}  -- cycle;

\path[draw] (0.7,0) node[below]{$0$} -- cycle;

\path[draw, fill=magenta,magenta, opacity=0.3] (0.7,-1.15)--(0.7,-0.6)--(0.8,-0.15)--(0.8,-1.15)--cycle;
\path[draw] (0.75,-1.15) node[above]{A} -- cycle;

\path[draw=black, line width=0.5] (0.7,-1.15)--(0.7,-0.6) -- cycle;
\path[draw=black, line width=0.5] (1.3,-1.15)--(1.3,-.6) -- cycle;

\path[draw, fill=yellow,yellow, opacity=0.3] (0.8,-1.15)--(.95,-1.15)--(0.95,-0.15)--cycle;
\path[draw] (0.89,-1.15) node[above]{C} -- cycle;

\path[draw, fill=cyan,cyan, opacity=0.3] (.95,-1.15)--(1.15,-.15)--(1.15, -1.15)--cycle;
\path[draw] (1.07,-1.15) node[above]{D} -- cycle;

\path[draw, fill=magenta,magenta, opacity=0.3] (1.15,-1.15) --(1.3,-1.15)--(1.3,-.6)--cycle;
\path[draw] (1.15,-1.15) node[below]{$\tau$} -- cycle;
\path[draw] (1.25,-1.15) node[above]{B} -- cycle;

\path[draw] (1.3,-1.15) node[below] (4) {$1$} -- cycle;

 \tikzset{
        arrow/.style={
            color=black,
            draw=black,
            -latex,
                font=\fontsize{13}{13}\selectfont},
        }

\draw[arrow](1) to [out=92,in=100] node[above]{$F$} (3);
\draw[arrow](2) to [out=88,in=70] node[above]{$F$} (4);
\end{tikzpicture}

\caption[]{The representation of the map $F \in \CETthree$ as a map of the interval. On the \emph{left} : the case when $0 \in F(I_b)$, and with the combinatorics   \eqref{eq:basic_combinatorial_type_CET_3}. Here $I_b=I_B^{\ttop} \cup I_C^{\ttop}$. On the \emph{right} : the case when $0 \in F(I_a)$. Here $I_a=I_A^{\ttop} \cup I_B^{\ttop}$, and $F$ is not minimal - $I_A^{\ttop}$ is an interval of $2$-periodic points.}\label{fig:combinatoric_type}
\end{figure}

\begin{lemma}\label{lemma:main_vector_preserved_lemma}
Fix any vertex $\sigma'$ in the connected component of the Rauzy graph of the permutation $\sigma$ defined by \eqref{eq:basic_combinatorial_type_CET_3}. Take a finite path $\gamma=e_1 \ldots e_m$ in the graph connecting $\sigma$ to $\sigma'$. Then the vector $A_{\gamma}^T v^{\perp}:=A_{(m)}^{T} v^{\perp}$ doesn't depend on the path taken. 
\end{lemma}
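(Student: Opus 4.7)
The plan is to reduce the statement to a finite combinatorial verification on the connected component of $\sigma$. The claim is equivalent to the existence of a function $w \colon V \to \R^4$ on the vertices of this component satisfying (i) $w(\sigma) = v^{\perp}$ and (ii) $w(\sigma_2) = A_e^T w(\sigma_1)$ for every directed edge $e \colon \sigma_1 \to \sigma_2$. Indeed, if such $w$ exists, then a straightforward induction on the length of $\gamma$, together with the identity $A_{\gamma}^T = A_{e_m}^T \cdots A_{e_1}^T$, yields $A_{\gamma}^T v^{\perp} = w(\sigma')$ for every path $\gamma = e_1 \ldots e_m$ from $\sigma$ to $\sigma'$, proving path-independence; conversely, path-independence lets one define $w(\sigma') := A_{\gamma}^T v^{\perp}$ unambiguously.

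To construct and verify $w$, I would pick a spanning tree $T$ of the connected component rooted at $\sigma$ and use it to define $w(\sigma')$ for each vertex via the unique $T$-path from $\sigma$ to $\sigma'$. Condition (ii) then holds automatically on every tree edge, so it only remains to check it on the non-tree edges, which are in bijection with a basis of the cycle space of the component. Each individual check is elementary: from the eight combinatorial cases of Table \ref{table:how_works}, every $A_e$ is an elementary matrix of the form $E + E_{ij}$, so $A_e^T$ acts on $w(\sigma_1) \in \Z^4$ simply by adding one coordinate to another. The verification thus reduces to a short list of scalar identities between explicit integer vectors.

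Since the quotient of the component has only $19$ vertices and each vertex has at most two outgoing edges, the number of non-tree edges is small, and the computation can be carried out either by hand or with the Sage code written by Paul Mercat. To sidestep the bookkeeping associated with the relabeling maps $h \colon \A \to \A$ attached to the dotted edges of the quotient graph (assumption A7), I would perform the verification on the full (unquotiented) Rauzy graph, where every edge corresponds to the identity relabeling; the compatibility of $w$ with the equivalence relation $\sim$, up to the obvious coordinate permutation, then follows a posteriori.

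The geometric input from the previous subsection---namely that the hypersurface $\tau = \frac{1}{2}$ is Rauzy-invariant via the involution $\phi$---motivates the lemma but is strictly weaker than what it asserts: invariance of the hyperplane only forces $A_{\gamma}^T v^{\perp}$ to be proportional to the correct normal direction at $\sigma'$, whereas the lemma rules out any path-dependent scalar factor. The main obstacle is therefore not conceptual but verificational: one must confirm the \emph{exact} equality, not just proportionality, across every cycle of the Rauzy diagram. I expect the values $w(\sigma')$ to admit a clean closed-form description in terms of the flip data $\boldsymbol{k}$ and the combinatorics of $\sigma'$, which would illuminate the statement, but such a description is not needed for the proof itself.
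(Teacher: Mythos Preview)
Your proposal is correct and takes essentially the same approach as the paper: the paper's proof is simply ``explicit verification by computer'' on the connected component (illustrated in Figure~\ref{fig:part_graph_4_connected_component_equivalence}), and your spanning-tree-plus-cycle-check scheme is exactly a way to organize that finite verification. Your additional remarks---that invariance of the hyperplane gives only proportionality, and that the values $w(\sigma')$ appear to be $\{0,\pm1\}$-vectors governed by the flip data---match observations the paper records but does not exploit in the proof.
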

\begin{proof}
The proof is an explicit verification by computer, see Figure \ref{fig:part_graph_4_connected_component_equivalence} of the Appendix for illustration.
 \end{proof}

The vector $A_{\gamma}^T v^{\perp}:=A_{\sigma'} v^{\perp}$ is then the invariant of this vertex.


We think that the invariance of the vector $v^{\perp}$ in the sense of Lemma \ref{lemma:main_vector_preserved_lemma} has a geometric interpretation that we didn't manage to find. The following is based on experimental data: for the vertices in the connected component of $\sigma$ that belong to the cycles in the Rauzy graph and their assigned vectors $A^T v^{\perp}$, the non-flipped letters correspond to the coordinates equal to $0$, and the flipped letters to coordinates equal to $\pm 1$. We hope to explore this in the future work. 

On the Figure \ref{fig:part_graph_4_connected_component_equivalence} of the Appendix we represent 
the graph from Figure \ref{pic:graph_for_four} with the additional information on the values of vectors $A_{[\sigma]} v^{\perp}$ for each of its vertices $[\sigma]$. Now we are ready to prove Theorem \ref{thm:first_ingredient}.

\smallskip

\begin{proof}
The vector of lengths $\lambda=(\lambda_i)_{i \in \mathcal{A}}$ with $\mathcal{A}=\{A,B,C,D\}$  and the permutation $\sigma$ as in \ref{eq:basic_combinatorial_type_CET_3} define a map $F \in \CETthree$. The goal is to prove that if the set $\{\mathcal{R}^n F\}$ is infinite (i.e., the Rauzy induction doesn't stop), then $\lambda \in H$.

Consider the infinite path $\gamma$ in the Rauzy graph $G$ defined by $\{\mathcal{R}^n F\}_{n=1}^{\infty}$: $\gamma=e_1 \ldots e_m \ldots $. This path is obviously contained in the connected component of $\sigma$ in $G$. 

Since $G$ is a finite graph, there exists a vertex $\tilde{\sigma} \in G$ such that the path $\gamma$ passes by $\tilde{\sigma}$ an infinite number of times: there exists a sequence of oriented edges $e_{k_i} \in G,  (k_i)_{i=1}^{\infty}$ such that each of them ends up at $\tilde{\sigma}$. 

Then $\lambda^{(k_m)}=A_{(k_m)}^{-1} \lambda$ for $A_{k_m}=A_{e_1} \ldots A_{e_{k_m}}$. By Lemma \ref{lemma:minimality}, $||\lambda^{(k_m)}||_{\infty} \rightarrow 0$ when $k_m \rightarrow \infty$. Let 
\begin{equation*}
\tilde{v}^{\perp}_j:=A_{(k_j)}^T v^{\perp}=A_{e_{k_1}}^T \ldots A_{e_{k_j}}^Tv^{\perp}
\end{equation*}
be the image of $v^{\perp}$ corresponding to the pre-cycle $e_{k_1} \ldots e_{k_j}$. By Lemma \ref{lemma:main_vector_preserved_lemma}, the vector $\tilde{v}^{\perp}_j$ doesn't depend on $j \in \mathbb{N}^*$ and is an invariant of the vertex $\tilde{\sigma}$ itself: $\tilde{v}^{\perp}_j=A_{\tilde{\sigma}} {v}^{\perp}$.

Denote $\tilde{H}:=A^{-1}_{(k_1)} H$. The orthogonality of $v^{\perp}$ and $H$ is equivalent to the orthogonality of $A_{\tilde{\sigma}} {v}^{\perp}$ and $\tilde{H}$. Now define the products of matrices that correspond to the loops in $G$ created by the path $\gamma$ based at $\tilde{\sigma}$. Let for any $m \in \mathbb{N}^*, m \geq 2$

\begin{equation}\label{eq:cycle_matrix_A}
\tilde{A}_{(m)}:=A_{e_{k_{m-1}+1}} \ldots A_{e_{k_m}}.
\end{equation}

Then, the calculation of the scalar products gives the result:
\begin{align*}
<A_{\tilde{\sigma}} {v}^{\perp}, \lambda^{(k_1)}> = <(\tilde{A}_{(m)}^{-1})^TA_{\tilde{\sigma}} {v}^{\perp}, \lambda^{(k_1)}> = <A_{\tilde{\sigma}} {v}^{\perp}, \tilde{A}_{(m)}^{-1} \lambda^{(k_1)}>= <A_{\tilde{\sigma}} {v}^{\perp},\lambda^{(k_m)} > \rightarrow 0, m \rightarrow \infty.
\end{align*}

Indeed, we see that $\lambda^{(k_1)} \perp A_{\tilde{\sigma}} {v}^{\perp}$ which is equivalent to $\lambda \in H$.

\end{proof}

\section{Structure of non-minimal maps in  $\CETn$ and integrability.}\label{sec:integrability_section}

In this Section we study a class of\emph{ integrable} interval exchange transformations with flips. For any map in this class, its suspension on a non-orientable surface has invariant cylinders and tori, foliated by linear foliations. This means that the dynamics of such IETs is very simple. We prove that a map in $\IETFn$ is integrable: always for $n=3$ and almost always for $n=4$. For $n=5$, we find an open set of non-integrable dynamics.

\subsection{Simple and integrable interval exchange transformations with flips: definitions.}

Any interval exchange transformation with flips $F$ can be defined by a triple $(\sigma, \boldsymbol{k}, \boldsymbol{ \lambda})$  that contains combinatorial data (generalized permutation and the vector of flips) and the vector of lengths of the intervals of continuity, see paragraph \ref{subs:MRI}. The generalized permutation $\sigma$ encodes in itself two orders on alphabet $\mathcal{A}$ that correspond to two words $\omega^{\ttop}, \omega^{\bbot} \in \mathcal{A}^n$ which correspond to the first and the second row of the matrix representation \eqref{eq:matrix_representation_gen_permutation} of $\sigma$.


\begin{definition}\label{def:simple}
The interval exchange transformation with (or without) flips $F \in \IETFn \cup \mathrm{IET}^n$ defined by the data $(\sigma, \boldsymbol{k}, \boldsymbol{ \lambda})$  is called \emph{simple} if $\exists p \in \{1, \ldots, n \}$ such that $\omega^{\ttop}=\omega^{\ttop}_1 \ldots \omega^{\ttop}_p$ and $\omega^{\bbot}=\omega^{\bbot}_1 \ldots \omega^{\bbot}_p$ with $\omega^{\bbot}_j, \omega^{\ttop}_j$ being nonempty words in the alphabet $\mathcal{A}$. And moreover, for any $j \in \{1, \ldots, p \}$ exactly one of these possibilities holds:

\begin{itemize}
\item[1.] (\emph{periodic cylinders}) $\omega^{\ttop}_j=\omega^{\bbot}_j$,
\item[2.] (\emph{cylinders of rotation}) there exist two words $x \neq y$ such that $\omega^{\ttop}_j=xy$ and $\omega^{\bbot}_j=yx$, and the coordinates of $\boldsymbol{k}$ corresponding to all the letters of $\A$ in the word $\omega^{\ttop}_j$ are equal to $1$,
\item[3.] (\emph{cylinders of rotation with a marked singularity})  there exist three different words $x, y$ and $z$ such that $\omega^{\ttop}_j=xyz, \omega^{\bbot}_j=zyx$; $\omega^{\ttop}_j=xyz, \omega^{\bbot}_j=zxy$ or $\omega^{\ttop}_j=xyz, \omega^{\bbot}_j=yzx$, and the corresponding to $\omega^{\ttop}$ coordinates of $\boldsymbol{k}$ are equal to $1$.
\end{itemize}

Obviously, in any of three cases, the lengths of $\omega^{\bbot}_j$ and $\omega^{\ttop}_j$ coincide and these words consist of the same sets of letters. The simplicity of $F$ doesn't depend on $\boldsymbol{\lambda}$ but only on its combinatorial data $(\sigma, \boldsymbol{k})$. 

\end{definition}

\begin{remark}
Simplicity can be defined in purely geometric terms. For example, for a map $F \in \FETn$, its square $T=F^2$ is an IET without flips. If $F$ is simple then, a translation surface corresponding to $T$ can be cut along the lines of the flow into  the union of the invariant cylinders and tori. The flow preserves a linear foliation on the invariant tori and a trivial foliation by periodic leaves on the cylinders.
\end{remark}

\begin{example}
The following combinatorial data gives three examples of simple maps.
\begin{itemize}
\item[1.]$\begin{pmatrix}
A&B&\bar{C}&\bar{D}\\
B&A&\bar{C}&\bar{D}
\end{pmatrix}
: \; \; \; \; \omega_1^{\ttop}=AB, \omega_1^{\bbot}=BA, \omega_2^{\ttop}=\omega_2^{\bbot}=CD$, $\boldsymbol{k}=(1,1,-1,-1)$;

\item[2.]$\begin{pmatrix}
B&A&C&{D}&\bar{E}\\
C&B&A&{D}&\bar{E}
\end{pmatrix}: \; \; \; \omega_1^{\ttop}=BAC, \omega_1^{\bbot}=CBA,  \omega_2^{\ttop}=\omega_2^{\bbot}=DE$, $\boldsymbol{k}=(1,1,1,1,-1)$;

\item[3.]$\begin{pmatrix}
A&D&E&B\\
B&A&D&E
\end{pmatrix}: \;\;\;\;\; \omega_1^{\ttop}=xy,\omega_1^{\bbot}=yx, x=ADE, y=B$, $\boldsymbol{k}=(1,1,1,1)$.
\end{itemize}
\end{example}

\begin{definition}
A map $F \in \IETFn \cup \mathrm{IET}^n$ is \emph{integrable} if there exists a Poincaré section for $F$ such that the dynamics of the first return map on this section is a simple IET (with or without) flips. 
A map $F \in \mathrm{CET}^n \cup \mathrm{CETF}^n$ is \emph{integrable} if a corresponding map on the interval is integrable.
\end{definition}

One easily checks that this definition for circle maps doesn't depend on the marked point. 

\begin{remark}
Integrability of an interval exchange transformation with flips has a simple geometric interpretation. Indeed, the integrability condition gives a strong topological restriction on a corresponding non-orientable flat surface obtained as a suspension. This surface can be cut along the lines of the suspension flow into a union of cylinders (on which a first-return map is a flip) and tori (on which a first-return map is a rotation, possibly identical). Of course, integrability implies the absence of minimality.
\end{remark}

\subsection{Integrability of maps in $\IETFthree$.}

In this paragraph we show that all maps from $\IETFthree$ are integrable. Let us first remark that for $n=2$ it follows obviously from the following result, proven by Keane.

\begin{proposition}[\cite{K75}]\label{prop:keane}
All $\IETFtwo$ are integrable, and even more, completely periodic. 
\end{proposition}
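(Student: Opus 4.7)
The plan is to prove the proposition by a short case analysis on the combinatorial data $(\sigma, \boldsymbol{k})$ of $F \in \IETFtwo$. Since $n = 2$ there are only two possible generalized permutations and three non-trivial flip vectors in $\{-1,+1\}^2 \setminus \{\one\}$, so up to relabeling the two intervals there are essentially three cases. In the two \emph{trivial} cases --- $\sigma = \id$ (where each interval is preserved setwise, so $F$ acts as the unique orientation-reversing isometry $x \mapsto c - x$ on each flipped interval and as the identity on each non-flipped interval) and $\sigma$ the swap with $\boldsymbol{k} = (-1,-1)$ (which forces $F(x) = 1 - x$ globally) --- one reads off immediately that $F^2 = \id$, so $F$ is completely periodic of period at most $2$.

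The remaining \emph{nontrivial} case is $\sigma$ the swap with exactly one flip; without loss of generality $A = [0,a)$ is flipped and $B = [a,1)$ is not, so $F|_A(x) = 1 - x$ and $F|_B(x) = x - a$. My plan for this case is to reduce it to the trivial ones by passing to the first return map $F_A : A \to A$. Every $F$-orbit must visit $A$ because each application of $F|_B$ strictly decreases the position by $a$; hence $F_A$ is everywhere defined on $A$, and an explicit calculation gives
\begin{equation*}
F_A(\delta) \;=\; (1 - \delta) \bmod a.
\end{equation*}
Setting $r := 1 - \lfloor 1/a \rfloor \cdot a \in [0, a)$, this formula cuts $A$ into the two subintervals $[0, r]$ and $(r, a)$, on each of which $F_A$ acts as the unique orientation-reversing involution; i.e.\ $F_A$ is itself a $2$-IETF of combinatorial type $\sigma = \id$, $\boldsymbol{k} = (-1,-1)$. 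Applying the already-handled trivial case to $F_A$ (on the smaller interval $A$) gives $F_A^2 = \id$, and combined with the finite return time to $A$ this shows that every $F$-orbit is periodic, with period bounded by $2\lceil 1/a \rceil$. Hence $F$ is completely periodic.

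Integrability then follows from complete periodicity: the partition of $A$ into the two subintervals of $F_A$-orbits lifts via the $F$-orbit structure to a partition of $[0,1)$ into finitely many invariant ``orbit cells'', each a union of subintervals foliated by $F$-orbits of a common period. Picking one interval-floor from each cell as a Poincar\'e section $S \subset [0,1)$ gives $F_S = \id$ on each component of $S$, which is the simplest possible simple IETF (a disjoint union of periodic cylinders of period $1$). The main obstacle in the plan is verifying the explicit formula for $F_A$ and its combinatorial type uniformly across the sub-cases $a \le 1/2$ (where $F(\delta) = 1 - \delta$ always lies in $B$, so one must iterate $F|_B$ several times before returning to $A$) and $a > 1/2$ (where $F(\delta)$ may already lie in $A$); both sub-cases require careful boundary bookkeeping for orbits that pass through singularities, but both yield the same trivial combinatorial type for $F_A$, so the reduction argument goes through in either case.
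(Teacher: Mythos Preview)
Your argument for complete periodicity is correct and is a genuinely different route from the paper's. The paper proves Proposition~\ref{prop:keane} by inspecting the modified Rauzy graph for $\IETFtwo$ (Figure~\ref{fig:IET2}): one checks that no infinite Rauzy path exists and that every terminal vertex is simple. You instead do a direct case analysis on $(\sigma,\boldsymbol{k})$ and, in the one nontrivial case, compute the first-return map $F_A$ to the flipped interval by hand, obtaining $F_A(\delta)=(1-\delta)\bmod a$ and recognising it as two disjoint flips. Your approach is more elementary and self-contained (and yields explicit periods $2q$ and $2(q{+}1)$ with $q=\lfloor 1/a\rfloor$), while the paper's approach is the $n=2$ instance of the Rauzy-induction machinery that it then bootstraps to $n=3,4$.

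There is one small inaccuracy in your integrability paragraph. If $S$ is any single floor of one of your orbit cells, the first return $F_S$ is \emph{not} the identity: since $F_A$ maps each subinterval $[0,r)$ (resp.\ $(r,a)$) onto itself by a flip, the first return of $F$ to that floor is the flip $\delta\mapsto r-\delta$ (resp.\ $\delta\mapsto r+a-\delta$), occurring after $q{+}1$ (resp.\ $q$) steps, not after a full period. This does not damage the conclusion: a flip of an interval onto itself is already \emph{simple} in the sense of Definition~\ref{def:simple} (case~1, periodic cylinders with $\omega^{\ttop}_j=\omega^{\bbot}_j$), so taking $A$ itself as the Poincar\'e section already exhibits integrability. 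Just replace the sentence claiming $F_S=\id$ by the observation that $F_A$ is simple.
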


\begin{proof} 
The study of the Rauzy graph gives a proof of this Theorem, see Figure \ref{fig:IET2}. A Rauzy graph for $\IETFtwo$ doesn't permit infinite loops for Rauzy induction and moreover, all the stop points correspond to simple maps.
\end{proof}

The proof we give here is modern. Keane's proof was done in $1975$ by other methods, four years before the invention of the standard Rauzy induction. We prove now an analogous statement for the family $\IETFthree$. 

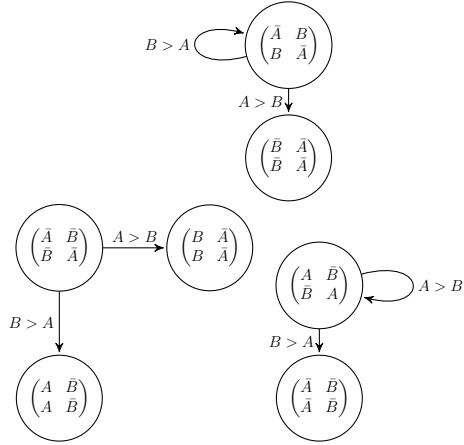
\begin{figure}
\centering

\begin{tikzpicture}[>=stealth',shorten >=1pt,auto,node distance=3 cm, scale = 0.5, transform shape]

\node[state] (A)                                   
 {$\begin{pmatrix}
     \bar{A}&B\\
     B&\bar{A}
\end{pmatrix} $};
\node[state]         (B) [below of=A]              
         {$\begin{pmatrix}
    \bar{B}&\bar{A}\\
     \bar{B}&\bar{A}
\end{pmatrix}$};

\path[->] (A) edge [left]       node [align=center]  {$ A>B $} (B)
        (A) edge [loop left] node [align=center]  {$ B >A $} (A);

\end{tikzpicture}

\begin{tikzpicture}[>=stealth',shorten >=1pt,auto,node distance=4 cm, scale = 0.5, transform shape]
\node[state] (A)                                   
 {$\begin{pmatrix}
     \bar{A}&\bar{B}\\
     \bar{B}&\bar{A} 
     \end{pmatrix}$};
\node[state]   (B) [below of=A]                      
 {$\begin{pmatrix}
    A&\bar{B}\\
     A&\bar{B}
\end{pmatrix}$};
\node[state]         (C) [right of=A]                       
{$\begin{pmatrix}
    B&\bar{A}\\
     B&\bar{A}
\end{pmatrix}$};

\path[->] (A) edge [left]       node [align=center]  {$ B>A $} (B)
        (A) edge [above] node [align=center]  {$ A >B $} (C);

\end{tikzpicture}
\begin{tikzpicture}[>=stealth',shorten >=1pt,auto,node distance=3 cm, scale = 0.5, transform shape]

\node[state] (A)                                   
 {$\begin{pmatrix}
     A&\bar{B}\\
     \bar{B}&A
\end{pmatrix} $};
\node[state]         (B) [below of=A]              
         {$\begin{pmatrix}
    \bar{A}&\bar{B}\\
     \bar{A}&\bar{B}
\end{pmatrix}$};

\path[->] (A) edge [left]       node [align=center]  {$ B>A $} (B)
        (A) edge [loop right] node [align=center]  {$ A >B $} (A);

\end{tikzpicture}

\caption[]{A union of components of the Rauzy graph for the class of maps from $\mathrm{IETF}^2$ for all generalized permutations $\sigma$ such that ${\omega}^{\ttop}=AB$.}\label{fig:IET2}
\end{figure}

\begin{proposition}\label{prop:ietfthree_integrability}
Any $F \in \IETFthree[0,1)$ is integrable, the corresponding Poincaré section can be chosen as a segment with one of its ends equal to $0$, and the set $\{{\mathcal{R}}^n F\}$ is finite (the modified Rauzy induction eventually stops).
\end{proposition}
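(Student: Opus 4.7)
The plan is to follow the exhaustive Rauzy-graph strategy already used in Figure \ref{fig:IET2} for Proposition \ref{prop:keane}. The key combinatorial observation is that the stopping condition $\alpha_{n-1}^{\ttop}=\alpha_{n-1}^{\bbot}$ holds automatically as soon as the top- and bottom-rightmost letters of the current generalized permutation coincide, so the proof reduces to showing that starting from any irreducible pair $(\sigma,\boldsymbol{k})$ with $\sigma \in S_3^{\mathcal{A}}$ and $\boldsymbol{k} \in \{-1,1\}^3 \setminus \{\one\}$, every walk in the modified Rauzy graph reaches in finitely many steps a vertex with this matching property.

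First I would enumerate all such irreducible combinatorial data, quotient by the equivalence relation of Definition \ref{def:equivalence_relationship}, and draw the resulting quotient Rauzy graph. This is a finite object, analogous to but larger than Figure \ref{fig:IET2}, in which each vertex has at most two outgoing edges corresponding to the two possible length-orderings of the end-intervals.

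Next I would verify, connected component by connected component, that no infinite walk exists. Self-loops occur and correspond, exactly as in Figure \ref{fig:IET2}, to Euclidean-algorithm-style subtractions between two specific length coordinates; each such loop must terminate either by equality of the two lengths (triggering $\square$) or by reversal of the inequality, pushing the dynamics out of the loop. Longer cycles must be ruled out by showing that their associated Rauzy matrix products force one of the length coordinates to vanish or to have its relative ordering reversed in finite time, driving the walk toward a matching vertex. This is the main obstacle in the plan: the bookkeeping for $n=3$ is heavier than for $n=2$, but remains finite and amenable to direct inspection, with computer assistance available if needed in the spirit of Lemma \ref{lemma:main_vector_preserved_lemma}.

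Finally, at each terminal vertex of the graph I would read off the combinatorial data of $\mathcal{R}^m F$ and check that it satisfies one of the three conditions of Definition \ref{def:simple}: the matched letter at position $n$ produces a single-letter periodic cylinder of type $\omega^{\ttop}_j=\omega^{\bbot}_j$, while what remains on the left is either trivially simple or a copy of an $\IETFtwo$, already known to be simple by Proposition \ref{prop:keane}. Since $\mathcal{R}^m F$ is by construction the first return map of $F$ to a sub-interval of the form $[0,L_m)$, this simultaneously yields the Poincaré section with endpoint $0$ and the integrability of $F$.
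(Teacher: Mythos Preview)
Your proposal is correct and follows essentially the same route as the paper: an exhaustive inspection of the modified Rauzy graph for $\IETFthree$, a termination argument for every possible walk, and then reduction of integrability to the $n=2$ case via Proposition~\ref{prop:keane}. The only refinement worth noting is that the paper replaces your cycle-by-cycle matrix analysis with a single uniform observation---in every cycle of this graph there is a letter of $\{A,B,C\}$ that never wins, so its length stays constant while the total interval length strictly decreases, forcing termination---and it also records that the leftover two-interval map may be an unflipped rotation in $\mathrm{IET}^2$ rather than an element of $\IETFtwo$.
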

\begin{proof}
The proof follows from the explicit study of the Rauzy graph for $\IETFthree$. One can easily see that for any cycle $\gamma$ in this graph there exists a letter from the alphabet $\A=\{A,B,C\}$  such that the corresponding interval never wins along $\gamma$. This means that the Rauzy induction stops for any $F \in \IETFthree$. See Figure \ref{fig:little_component} for one of such cycles in the Rauzy graph. 

In general, the stop of the Rauzy induction reduces the study of the integrability of maps in $\IETFthree$  to the case of the integrability of IETs on a smaller number of intervals. Indeed, finding a Poincaré section with an integrable map gives a (possibly, finer) Poincaré section with a simple map. The proof of integrability finishes by recurrence: in the set $\IETFtwo \cup \mathrm{IET}^2$ all maps are integrable, from Proposition \ref{prop:keane} and the integrability of rotations. Here a more precise study of the Rauzy graph shows that the vertices on which the combinatorial Rauzy induction can't be continued, are simple.

%
%
\end{proof}


\begin{figure}
\centering
\includegraphics*[scale=0.5]{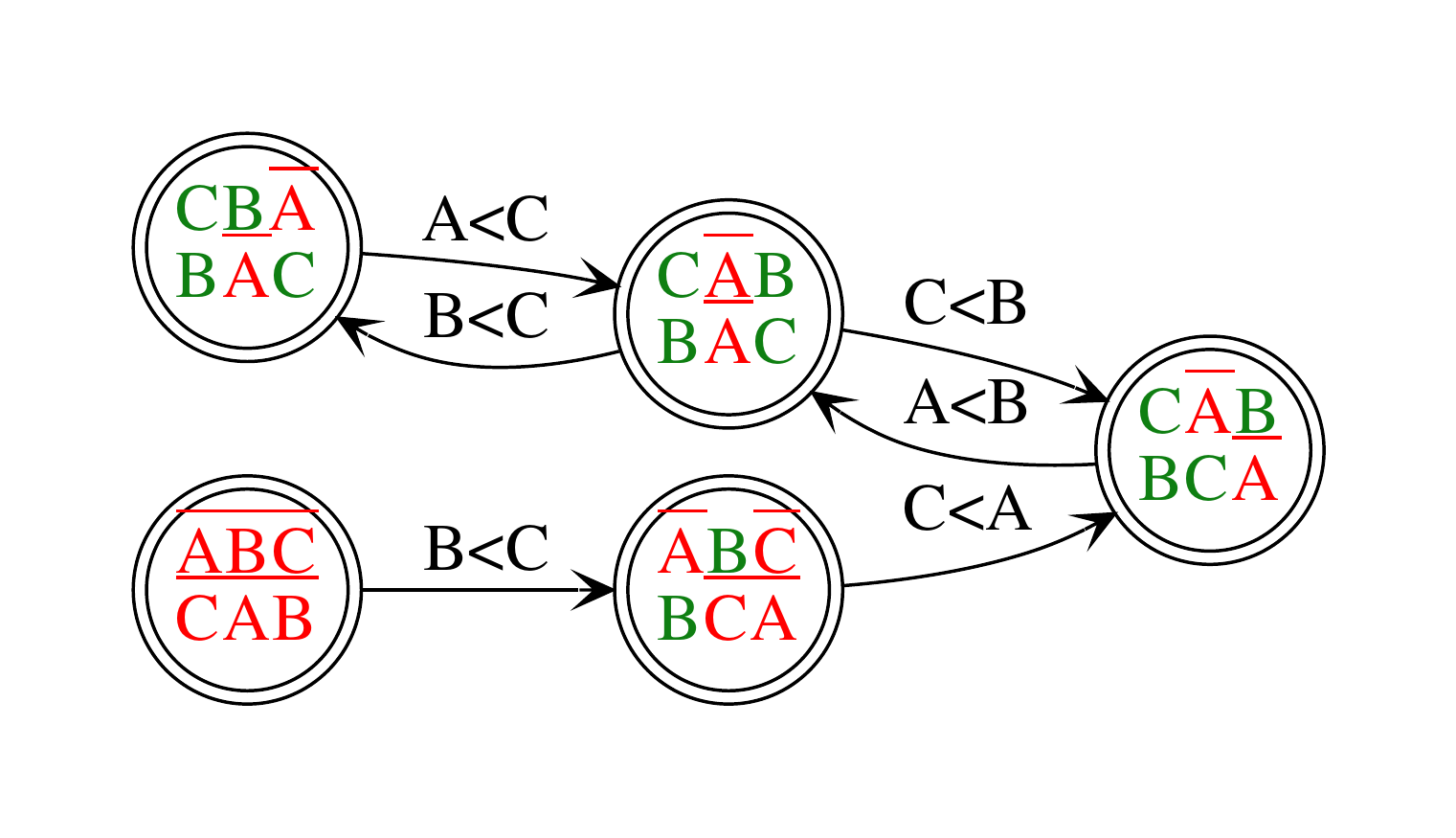}
\caption[]{
The connected component of the permutation $\begin{pmatrix}
\bar{A}&\bar{B}&\bar{C}\\
\bar{C}&\bar{A}&\bar{B}
\end{pmatrix}$ in the Rauzy graph of $\IETFthree$. One can see that in the cycles of this component, the letter $A$ never wins, hence the Rauzy induction stops on any of the maps $F\in \IETFthree$ with the combinatorics of this component.}
\label{fig:little_component}
\end{figure}

\subsection{Generic integrability of maps in $\IETFfour$.}

\begin{proposition}\label{prop:just_before}
Let $F \in \IETFfour[0,1)$ be such that the set $\{{\mathcal{R}}^n F\}$ is finite and the lengths $\lambda_i$ are rationnally independent. Then $F$ is integrable. 
\end{proposition}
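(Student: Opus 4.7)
The plan is to reduce to the three-interval case settled by Proposition~\ref{prop:ietfthree_integrability}. First I would replace $F$ by its terminal iterate $F' := \mathcal{R}^N F$, where $N$ is the smallest integer such that $\mathcal{R} F' = \square$; this exists because $\{\mathcal{R}^n F\}$ is finite and the length of the interval strictly decreases along Rauzy induction, so the sequence cannot cycle. Each Rauzy matrix is an invertible integer matrix, so the $\mathbb{Q}$-span of the lengths is preserved and the lengths of $F'$ remain rationally independent; in particular, distinct labeled intervals of $F'$ still have distinct lengths. Since $F'$ is the first return of $F$ to its interval of definition $J$, it suffices to prove $F'$ is integrable: any Poincar\'e section of $F'$ is automatically a Poincar\'e section of $F$ with the same first-return map.

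The stopping condition $\mathcal{R} F' = \square$ means the rightmost top and bottom intervals of $F'$ have the same length, and by rational independence they must carry the same label, say $D$. Hence $F'$ maps $I_D$ onto itself, with $F'|_{I_D}$ being either the identity or a flip of $I_D$ onto itself, depending on the flip status of $D$. The complement $J \setminus I_D$ is likewise $F'$-invariant, and $F'|_{J \setminus I_D}$ is an exchange transformation on three intervals, either with flips (if some letter other than $D$ is flipped) or pure (if only $D$ is flipped).

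I would handle the complement in two sub-cases. If it is a $3$-IETF, then Proposition~\ref{prop:ietfthree_integrability} applies and produces a Poincar\'e subsection $K \subset J \setminus I_D$ on which the first return of $F'$ is simple. If it is a pure $3$-IET, a finite inspection of the six elements of $S_3$ shows that every $3$-IET is already simple in the sense of Definition~\ref{def:simple}: the identity permutation splits into three periodic cylinders (case~$1$), each transposition yields a length-$2$ rotation block plus a fixed letter (case~$2$ together with case~$1$), and the two $3$-cycles fit case~$3$ with $\omega^{\ttop} = xyz$ and $\omega^{\bbot} \in \{yzx, zxy\}$. In this sub-case I take $K$ to be the entire complement.

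Finally I would set $S := K \cup I_D$. Because $I_D$ occupies the rightmost position in both rows of the permutation of $F'$, the first return of $F'$ to $S$ decomposes as the simple blocks of $F'|_K$ on the left followed by the singleton block $\{D\}$ on the right; the latter is a periodic cylinder (case~$1$ of Definition~\ref{def:simple}, which places no restriction on the flip of the matched letter), so the combined return is a simple IETF. Pulling $S$ back through the Rauzy induction gives a Poincar\'e section of $F$ with simple first return, which is exactly integrability. The main obstacle should be the bookkeeping around the pure $3$-IET sub-case and, more importantly, verifying that the concatenation of the simple decomposition of $F'|_K$ with the singleton block $\{D\}$ yields a single generalized permutation matching Definition~\ref{def:simple}, rather than a juxtaposition of integrable pieces with incompatible block structures.
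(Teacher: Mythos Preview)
Your proposal is correct and follows essentially the same route as the paper: run Rauzy induction to its terminal step, split off the rightmost invariant interval, and reduce the complement to a three-interval map handled either by Proposition~\ref{prop:ietfthree_integrability} or by the observation that every $3$-IET is simple. The paper organizes the case split slightly differently (on whether the rightmost block is flipped rather than on whether the complement retains a flip) and invokes the remark that a non-flipped letter cannot arrive at the rightmost position through induction, but this extra observation is not needed for the argument and your version is equally valid.
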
 
\begin{proof}

Suppose that the Rauzy induction stopped after $n$ steps for $F \in \IETFfour[0,1)$. This means that the combinatorial Rauzy induction has stopped as well. Then $\exists \alpha, \beta \in [0,1], \beta<\alpha$ and the segments $J=[\beta, \alpha), I=[0, \alpha)$ such that $I$ is a Poincaré section for $F$ and the restriction of the first-return map on $J$ is either an identity map or a flip. Denote by $G$ the restriction of the first-return map on $[0, \beta)$.

First, from the combinatorics of Rauzy induction follows that if $J$ is not flipped, then $F$ is reduced, $n=1$ and $\alpha=1$ (since a non-flipped winner can't come to the last place in a row). Moreover, $G \in \IETFthree [0,\beta)$. By Proposition \ref{prop:ietfthree_integrability}, $G$ is integrable with a corresponding Poincaré section $[0, \gamma)$ for some $\gamma\leq\beta$. Hence $F$ is integrable with a Poincaré section equal to the union $[0, \gamma) \cup [\beta, \alpha)$.
 
Second, in the case when $J$ is flipped, then either $ G \in \IETFthree[0,\beta)$ or $G \in \mathrm{IET}^3[0,\beta)$. The first case is treated as before. In the second case the proof is also finished since any map in $ \mathrm{IET}^3$ corresponds to a rotation with a marked singularity.
\end{proof} 

In the proof of this Proposition, we construct explicitely a Poincaré section that provides a simple first-return map on it, with the help of the Rauzy induction (on the right) for the maps in $\IETFthree$. Let us note that this construction can be generalized for the maps in $ \IETFn[0,1)$ for any $n$.

\begin{definition}
Take any $F \in \IETFn[0,1)$ such that the lengths $\lambda_i$ are rationally independent. If $\{ \mathcal{R}^n F\}$ is infinite, we define its standard Poincaré section to be $[0,1)$ and its standard Poincaré map to be itself. Suppose now $\{ \mathcal{R}^n F\}$ finite and $F$ not reduced. Then, analogically to the proof of Proposition \ref{prop:just_before}, one defines $\alpha_1, \beta_1 \in [0,1], J_1 \subset I_1=[0, \alpha_1)$ and the first return map $G_1$. If $G_1$ is simple, then we stop the procedure. If it is not simple, we reiterate the process with $F:=G_1$. Then, the union of corresponding segments gives a so-called \emph{Poincaré-Rauzy section}, and a \emph{Poincaré-Rauzy map} (as a first-return map on it). For the reduced $F$, we proceed with the same construction for each of the reduced components, and then unite the resulting Poincaré sections.
%
%
%
%
%
%
 \end{definition}

 As a direct corollary of Proposition \ref{prop:just_before} and Theorem \ref{thm:Nogueira}, we obtain
\begin{proposition}\label{thm:IETfour}
Almost any $F \in \IETFfour$ is integrable.
\end{proposition}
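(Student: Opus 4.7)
The plan is to combine Proposition \ref{prop:just_before} with Nogueira's Theorem \ref{thm:Nogueira}, stratifying $\IETFfour$ by combinatorial type. The space $\IETFfour$ decomposes into finitely many strata indexed by pairs $(\sigma, \boldsymbol{k}) \in S_4^{\A} \times (\{-1,1\}^4 \setminus \one)$, with each stratum parametrized by a length vector $\boldsymbol{\lambda}$ ranging over the open simplex $\{\boldsymbol{\lambda} \in \R_+^4 : \sum_i \lambda_i = 1\}$. Since there are only finitely many combinatorial types, it suffices to prove that within each fixed stratum, almost every length vector (in the Lebesgue sense on the simplex) gives an integrable map.

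Fixing $(\sigma, \boldsymbol{k})$, Proposition \ref{prop:just_before} reduces integrability to the conjunction of two conditions on $\boldsymbol{\lambda}$: (i) the modified Rauzy induction eventually terminates on $F$, i.e. $\{\mathcal{R}^n F\}$ is finite; and (ii) the coordinates of $\boldsymbol{\lambda}$ are rationally independent. The plan is to show that each of these two conditions individually is of full Lebesgue measure in the simplex.

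Condition (ii) is straightforward: the vectors $\boldsymbol{\lambda}$ with rationally dependent coordinates form a countable union of rational hyperplanes $\sum_i q_i \lambda_i = 0$ with $(q_i) \in \Q^4 \setminus \{0\}$, and each such hyperplane intersects the simplex in a set of codimension one, hence of zero Lebesgue measure. Condition (i) is the substantive input, and here I would invoke Theorem \ref{thm:Nogueira}: it asserts that for almost every choice of $\boldsymbol{\lambda}$, the map $F$ admits a subinterval $I$ on which some iterate $F^N$ coincides with the first return map and equals a flip of $I$ onto itself. Nogueira's proof of this result proceeds precisely by establishing that the modified Rauzy induction terminates almost surely on the stratum; the existence of the periodic subinterval is the geometric counterpart of the combinatorial stopping. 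Taking the intersection of the two full-measure sets and invoking Proposition \ref{prop:just_before} yields integrability on a full-measure subset of each stratum; summing over the finite index set concludes the proof.

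The only potentially delicate point is the translation between the formulation of Theorem \ref{thm:Nogueira} (existence of a completely periodic sub-interval under $F^N$) and the hypothesis required by Proposition \ref{prop:just_before} (finiteness of $\{\mathcal{R}^n F\}$). These two statements are essentially equivalent: if $\mathcal{R}^n F$ were defined for all $n$, then generically the lengths shrink and, by Lemma \ref{lemma:minimality}, $F$ would be minimal, contradicting the existence of the periodic subinterval. This equivalence is implicit in Nogueira's proof, so no genuine extra work is required; it is the only spot where one must take some care in invoking the cited results rather than a true obstacle.
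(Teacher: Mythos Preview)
Your proposal is correct and follows exactly the approach of the paper, which states the result as a direct corollary of Proposition~\ref{prop:just_before} and Theorem~\ref{thm:Nogueira} without further elaboration. You have simply spelled out the details the paper leaves implicit: the stratification by combinatorial type, the full-measure nature of rational independence, and the fact that Nogueira's argument actually establishes almost-sure termination of the modified Rauzy induction (which is precisely the hypothesis Proposition~\ref{prop:just_before} requires).
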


\subsection{Integrability of $\CETn$.}

Suppose that $F=F_{\tau, l_1, l_2, l_2}  \in \CETthree$ is such that the ratios $\frac{l_i}{l_j} \notin \Q$ for $i \neq j$ and the Rauzy induction stops for $F$. Then, by Proposition \ref{prop:just_before}, $F$ is integrable, with a Poincaré-Rauzy map being simple.

\begin{proposition}\label{prop:integrable_3}
Suppose that $F=F_{\tau, l_1, l_2, l_2}  \in \CETthree$ is such as above. Then $G$ is an IET with flips and for any interval $I_i$ of continuity of $G$ such that $G(I_i)=I_i$, $k_i=-1$ (this interval is flipped).
\end{proposition}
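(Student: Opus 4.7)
The plan is to analyze the iterated Poincaré–Rauzy construction applied to $F$, viewed as an element of $\IETFfour[0,1)$ after the canonical cut at the origin. Under the hypothesis $l_i/l_j \notin \Q$, the point $0$ is not a singularity of $F$ and the combinatorics of $F$ is one of the three entries of \eqref{eq:choice}; only the first is irreducible, and I would focus on this case. The two reducible cases are handled directly, as each produces a flipped $2$-periodic subinterval from $F(I_j) \cap I_j \neq \emptyset$ which already has $k = -1$.

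For the claim that $G$ is an IETF, I would repeat the argument of the proof of Proposition \ref{prop:just_before}: irreducibility forces the first Rauzy stop to produce a subinterval $J$ on which the first-return map is a flip (not the identity), since a non-flipped winner cannot occupy the last slot of an irreducible generalized permutation. This flipped $J$ persists as a component of continuity of $G$, so $G$ has at least one flipped interval.

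For the main claim I would proceed by induction on the depth of the Poincaré–Rauzy recursion, invoking Proposition \ref{prop:ietfthree_integrability} to continue the construction after each stop. The inductive statement is that at every stage the inherited sub-IETF has the property that each of its self-mapped intervals is flipped. At each Rauzy stop the newly produced self-map is flipped by the same irreducibility-plus-last-slot argument; the base cases are $\IETFtwo$, where every self-map is a flip by Proposition \ref{prop:keane}, and $\IET^2$, which is a rotation with a marked singularity and has a setwise-fixed subinterval only if the rotation is trivial (zero angle).

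The main obstacle is thus to rule out a trivial rotation at the bottom of the recursion, i.e., to exclude the existence of an open interval $I \subset \Sph^1$ on which some power $F^p$ is the identity. Here $p$ must be even, because any odd composition of the flips $x \mapsto -x + c$ is again of this form and has at most one fixed point per continuity piece. Writing $p = 2n$, the identity condition on $I$ becomes the vanishing modulo $1$ of
\[
\sum_{k=1}^{n}\bigl(l_{j_{2k-1}} - l_{j_{2k-2}}\bigr) = c_1 l_1 + c_2 l_2 + c_3 l_3,
\]
with $c_a \in \Z$ and $c_1 + c_2 + c_3 = 0$, where $(j_0,\ldots,j_{2n-1})$ is the common symbolic itinerary of the orbit of $I$. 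To close the argument, one tracks the $\Z$-linear dependence of the relevant sub-interval lengths on $\{l_1, l_2, l_3, \tau\}$ through the Rauzy recursion and uses the irrationality hypothesis together with the stopping condition to exclude any non-trivial such vanishing. The invariant vector $v^\perp$ of Lemma \ref{lemma:main_vector_preserved_lemma} can be instrumental: a non-trivial identity relation at the bottom of the recursion pulls back to a relation placing the starting data on the hypersurface $\tau = 1/2$, which is incompatible with the generic Poincaré–Rauzy trajectory under the irrationality hypothesis. This yields the desired contradiction and completes the proof.
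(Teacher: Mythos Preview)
Your approach diverges from the paper's at the decisive step, and the divergence leaves a real gap. The paper's proof is purely combinatorial: at the Rauzy stop the permutation $\delta$ has a letter $X$ occupying the last slot on both rows, and $k_X=-1$ by the irreducibility argument you correctly lift from Proposition~\ref{prop:just_before}. If a second invariant interval $I_Y$ existed, rational independence of the lengths would force $Y$ to sit in the same slot $j\in\{1,3\}$ on top and bottom of $\delta$. The paper then runs Rauzy induction \emph{backward} from such a $\delta$ and checks by hand that no backward path can ever reach any of the three starting types in \eqref{eq:choice}. No arithmetic of the $l_j$ is used beyond the single pinning of $Y$ to a common slot.

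Your arithmetic route does not close. You correctly derive that an identity return $F^{2n}=\id$ on an open interval yields $c_1 l_1+c_2 l_2+c_3 l_3\in\Z$ with $c_1+c_2+c_3=0$; but this is \emph{not} excluded by the hypothesis that the ratios $l_i/l_j$ are irrational. For instance, taking $l_3=\tfrac{1}{3}$ and $l_1=\tfrac{1}{3}+\pi^{-1}$, $l_2=\tfrac{1}{3}-\pi^{-1}$, all three ratios are irrational yet $l_1+l_2-2l_3=0$, i.e.\ $(c_1,c_2,c_3)=(1,1,-2)$ satisfies your relation. Worse, even the trivial solution $c_1=c_2=c_3=0$ is compatible with admissible itineraries (e.g.\ the itinerary $abcabc$ has every $c_a=0$), so the relation carries no obstruction at all without further input.

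The appeal to Lemma~\ref{lemma:main_vector_preserved_lemma} does not supply that input. That lemma says a certain vector is preserved along Rauzy paths, and in the paper it is used to show minimality forces $\tau=\tfrac{1}{2}$; it says nothing about the combinatorics at a Rauzy \emph{stop}, and there is no mechanism given by which an identity relation on a sub-interval ``pulls back'' to the constraint $\tau=\tfrac{1}{2}$. Even if it did, the Proposition must also cover the case $\tau=\tfrac{1}{2}$ with stopping Rauzy induction, so landing on that hypersurface would not be a contradiction. The backward-path argument in the Rauzy graph, or something of comparable combinatorial strength, seems genuinely needed here.
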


\begin{proof}
Let $\sigma$ and $\delta$ be generalized permutations corresponding to $F$ and $G$. In a Rauzy graph, there exists a path connecting $\sigma$ to $\delta$, by construction.
For some letter $X \in \mathcal{A}=\{A,B,C,D\}$ we have $\left({\delta}^{\ttop}\right)^{-1}(4)=\left({\delta}^{\bbot}\right)^{-1}(4)=X \in \mathcal{A}$. As in the arguments of Proposition \ref{prop:just_before}, $k_X \neq 1$.

Suppose now that there exists an invariant interval $I_Y$ for $G$ different from $I_X$. This can happen only if  $\left(\delta^{\ttop}\right)^{-1}(j)=\left(\delta^{\bbot}\right)^{-1}(j)=Y \in \mathcal{A}$ for $j=1,3$ (since the lengths are rationally independent). We will now obtain a combinatorial contradiction: there is no inverse Rauzy path connecting $\delta$ to $\sigma$. 

Indeed, first, if $j=3$ then by following backward Rauzy induction, we obtain:
\begin{equation*}
\delta=\begin{pmatrix}
\ast&\ast&Y&\bar{X}\\
\ast&\ast&Y&\bar{X}
\end{pmatrix}\xleftarrow[]{X>Y}
\begin{pmatrix}
\ast&\ast&\bar{X}&\bar{Y}\\
\ast&\ast&\bar{Y}&\bar{X}
\end{pmatrix}=: \sigma_1 \xleftarrow[]{?} \emptyset.
\end{equation*} 

Inverse Rauzy induction can't be continued, hence $\sigma_1=\sigma$ but this is inconsistent with the possible combinatorics for the map $F$, see \eqref{eq:choice}. 

Second, if $j=1$, $\delta \neq \sigma$ (since $\sigma$ is defined by one of three permutation \eqref{eq:choice} and $\delta$ has an invariant non-flipped cylinder on the left of the interval). Then, since any letter moves to the left in a row only if it is a lose in a step of the induction, we have a following chain in the connected component of $\sigma$:

\begin{equation*}
\delta=\begin{pmatrix}
Y&\ast&\ast&\bar{X}\\
Y&\ast&\ast&\bar{X}
\end{pmatrix}\xleftarrow[] \; \; \; \ldots \; \; \; \xleftarrow[] \;\;
\begin{pmatrix}
Y&\ast&\ast&\bar{Z}\\
Y&\bar{Z}&\ast&\ast
\end{pmatrix}
\xleftarrow[]{Z>Y}
\begin{pmatrix}
\bar{Y}&\ast&\ast&\bar{Z}\\
\bar{Z}&\ast&\ast&\bar{Y}
\end{pmatrix}=: \sigma_2 \xleftarrow[]{?} \emptyset.
\end{equation*} 

After $\sigma_2$, the inverse path can't be continued and none of the permutations has one of the combinatorial types in \eqref{eq:choice}.
%
%

%
\end{proof}

As a direct corollary of Theorem \ref{thm:first_ingredient} and Proposition \ref{prop:integrable_3}, we have
\begin{proposition}\label{thm:CETthree}
If $\tau \neq \frac{1}{2}$ then any $F_{\tau, l_1, l_2, l_3} \in \CETthree$ is integrable (and not minimal). Moreover, if the ratios $\frac{l_i}{l_j} \notin \Q$ for $i \neq j$ then for any periodic point of $F$ there exist an interval $I_i$ on the Poincaré-Rauzy section, containing it and flipped on itself by the Poincaré-Rauzy first-return map.
\end{proposition}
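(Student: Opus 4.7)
My plan is to derive both assertions as a direct corollary of Theorem \ref{thm:first_ingredient} and Proposition \ref{prop:integrable_3}, using along the way the integrability machinery of the previous subsection.

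For the first assertion, I would argue by contraposition. Assume $\tau \neq \tfrac{1}{2}$. By Theorem \ref{thm:first_ingredient}, $F = F_{\tau,l_1,l_2,l_3}$ is not minimal, so by Lemma \ref{lemma:minimality} the modified Rauzy induction halts after finitely many steps. Reading $F$ as an element of $\IETFfour[0,1)$ via the cut at the basepoint (Remark \ref{remark:CET_as_IET_plus_one}), Proposition \ref{prop:just_before} then applies (when the $l_i$ are rationally independent) and produces a Poincaré section on which the first-return map is simple, i.e.\ $F$ is integrable. In the rationally dependent regime I would argue directly from Definition \ref{def:fully_flpped_3_interval_exchange_transformations_on_the_circle:4} that every orbit of $F$ is periodic, so $F$ is trivially integrable with every cylinder of periodic type in the sense of Definition \ref{def:simple}.

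For the moreover part, fix rationally independent $l_i$ and a periodic point $x$ of $F$. Let $S$ be the Poincaré--Rauzy section built above and $G$ its first-return map, which is a simple IETF; after replacing $x$ by an iterate, we may assume $x \in S$. By Definition \ref{def:simple}, $x$ sits in a unique cylinder $J$ of $G$. A cylinder of rotation (type $2$ or $3$) would make $G|_J$ an irrational rotation (the lengths of the intervals of the Poincaré--Rauzy partition are \Z-linear combinations of the $l_i$ and hence inherit rational independence), and such a rotation has no periodic points, contradicting the periodicity of $x$. Therefore $J$ is a periodic cylinder, i.e.\ a finite union of $G$-invariant intervals $I_i$. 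Applying Proposition \ref{prop:integrable_3} to the $I_i$ containing $x$ yields $k_i = -1$, which is exactly the desired statement.

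The main obstacle I foresee lies in handling the edge cases not covered by Proposition \ref{prop:just_before}: the reducible combinatorics (the second and third generalized permutations of \eqref{eq:choice}) and the rationally dependent regime. For reducible combinatorics, $F$ splits into independent sub-IETFs on each of which the argument above can be run. For rationally dependent lengths, the extra relations have to be exploited together with an inspection of the connected component of \eqref{eq:basic_combinatorial_type_CET_3} in the Rauzy graph (Figure \ref{pic:graph_for_four}) to verify that the stopping point of induction still produces only cylinders of periodic type. Once these bookkeeping verifications are in place, the Proposition follows at once from the two results cited at the outset.
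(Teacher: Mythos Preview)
Your approach is essentially the paper's: both treat the Proposition as an immediate corollary of Theorem~\ref{thm:first_ingredient} and Proposition~\ref{prop:integrable_3}, and you have correctly identified the chain of implications (non-minimality, stop of Rauzy induction, Proposition~\ref{prop:just_before}, then Proposition~\ref{prop:integrable_3}).

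Two small points deserve tightening. First, Lemma~\ref{lemma:minimality} reads ``minimal $\Leftrightarrow$ (Rauzy never stops \emph{and} $\|\lambda^{(m)}\|_\infty\to 0$)''; its negation does not by itself give ``Rauzy stops'' from ``not minimal''. What you actually need is that $\tau\neq\tfrac12$ forces the induction to stop, and this is exactly what the \emph{proof} of Theorem~\ref{thm:first_ingredient} establishes (it assumes an infinite Rauzy path and deduces $\lambda\in H$). Cite that argument rather than Lemma~\ref{lemma:minimality}. Second, your handling of the rationally dependent regime is too quick: rational dependence of the $l_i$ alone does not force every orbit of $F$ to be periodic (the parameter $\tau$ still enters), so ``trivially integrable'' is not justified as written. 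The paper does not spell this case out either; it simply declares the result a direct corollary, so your explicit flagging of the reducible combinatorics and the degenerate length relations is already more careful than the original.
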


We have seen that the families $\IETFtwo, \IETFthree$ consist only of integrable maps, and the families $\IETFfour$ and $\CETthree$ have almost all of their maps integrable. Although, for a bigger number of intervals, in the families $\IETFn$ for $n \geq 5$, the stop of the Rauzy induction doesn't necessarily imply integrability.

\begin{proposition}\label{prop:non-integrability-for-big-n}
For any $n \in \N \geq 5$ there exists an open set of non-integrable maps in $\IETFn$.
\end{proposition}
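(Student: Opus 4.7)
The plan is to exhibit, for each $n\geq 5$, a non-empty open subset of $\IETFn$ consisting of non-integrable maps. It suffices to handle the case $n=5$: for $n\geq 6$ one appends $n-5$ extra letters, each a period-$2$ flipped cylinder whose presence contributes only a simple component and does not affect integrability of the rest.

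For $n=5$, I would consider the combinatorial data
$$
\sigma = \begin{pmatrix} B & C & D & E & \bar{A} \\ C & E & B & D & \bar{A} \end{pmatrix}, \qquad \boldsymbol{k} = (+1, +1, +1, +1, -1).
$$
Since $A$ is the only flipped letter and occupies the rightmost position in both rows with the same length $\lambda_A$, we have $\mathcal{R}F=\square$ immediately for every $\boldsymbol{\lambda}$; the interval $I_A$ is a period-$2$ self-flipped cylinder (simple), while on the complement $[0,1-\lambda_A)$ the map agrees with a $4$-IET $G$ without flips of combinatorics $(BCDE, CEBD)$. This permutation lies in the Rauzy class of $(3,1,4,2)$, whose suspension is a translation surface of genus $2$. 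A direct case analysis based on Definition \ref{def:simple} rules out every decomposition of type (1), (2) or (3) for this permutation, and the genus being a Rauzy invariant forces \emph{every} permutation in this class (and hence every first-return of $G$ on any Rauzy sub-interval) to be non-simple.

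Consequently, any hypothetical Poincaré-Rauzy witness of integrability for $F$ must reduce, through the peeling of $I_A$, to an integrability witness for $G$. Because the Rauzy class of $G$ contains no simple permutation, the only way for $G$ to be integrable is to admit a horizontal cylinder: cutting $G$ along such a cylinder leaves a residual surface of genus at most one, on which the first-return map is a rotation (hence simple). Thus $F$ is non-integrable if and only if $G$ has no horizontal cylinder, equivalently, if and only if $G$ is minimal on its genus-$2$ suspension.

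The principal obstacle is then to promote the minimality of $G$ to a truly \emph{open} condition on $\boldsymbol{\lambda}$, since the absence of horizontal cylinders is \emph{a priori} only a $G_\delta$-dense condition of full measure in the $4$-simplex. To overcome this, the plan is to exploit the hyperbolic structure of the Rauzy-Veech renormalization on the genus-$2$ Rauzy class: fix a Pisot-type self-similar length eigenvector $\boldsymbol{\lambda}_\star$ associated with a loop of the Rauzy graph, and take $U$ to be a small open neighborhood of $(\boldsymbol{\lambda}_\star, \lambda_A)$ in the simplex, chosen so that the Rauzy-Veech orbit of every $\boldsymbol{\lambda}\in U$ remains trapped inside a prescribed compact region of Rauzy cells in which short-cylinder relations are uniformly excluded. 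Every $\boldsymbol{\lambda}\in U$ then produces a minimal $G$, hence a non-integrable $F$, yielding the desired non-empty open set of non-integrable maps in $\IETFfive$.
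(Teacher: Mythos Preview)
Your setup differs from the paper's: you place the single flipped letter at the end so that Rauzy induction halts at once, whereas the paper starts from the fully flipped rotation-type permutation $(\bar A\bar B\bar C\bar D\bar E/\bar B\bar C\bar D\bar E\bar A)$, imposes $\lambda_A>\max(\lambda_B,\lambda_C,\lambda_D,\lambda_E)$, and runs four Rauzy steps (each with $A$ winning) to reach $(EDCB\,\bar A/BCDE\,\bar A)$. At that point both arguments are in the same position --- one self-flipped cylinder plus a $4$-IET without flips whose combinatorics is genus~$2$ --- and the paper simply asserts that this $4$-IET ``is not simple by looking at a corresponding foliation'' and stops, without ever discussing the openness issue you raise.

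Your last paragraph, however, contains a genuine gap. You try to obtain an \emph{open} set of length vectors for which the $4$-IET $G$ is minimal, by taking a neighbourhood of a Pisot fixed point and arguing that Rauzy--Veech orbits remain trapped in a compact set of cells. This is impossible: every open subset of the length simplex contains vectors with rational entries, and for those $G$ is completely periodic. A completely periodic $G$ admits a Poincar\'e section (one short arc per cylinder) on which the first return is the identity, and adjoining the self-flipped $I_A$ yields a simple section for $F$; hence $F$ is integrable. Thus integrable maps are dense in your candidate set, and no hyperbolicity argument can prevent this --- the Rauzy--Veech renormalisation is hyperbolic on the space of zippered rectangles (lengths \emph{and} suspension heights), not on the length simplex alone, where periodic parameters are dense. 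Your plan therefore cannot deliver an open set of minimal $G$; any rescue would have to argue non-integrability without relying on minimality of $G$, which you have not done.
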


\begin{proof}
For an alphabet $A=\{A,B,C,D,E\}$, take a generalized permutation
$$
\delta:=\begin{pmatrix} \bar{A} &\bar{B} & \bar{C} & \bar{D} & \bar{E} \\
\bar{B} &\bar{C} & \bar{D} & \bar{E} & \bar{A}
\end{pmatrix}
$$
and a subset $\mathcal{F}$ of maps in $\mathrm{FET}^5$ with such combinatorics $\delta$ and the following restrictions on the lengths : $\lambda_A \geq \max \{\lambda_B, \lambda_C, \lambda_D, \lambda_E\}$ and $\lambda_i$ being rationally independent, $i \in \A$. This is obviously an open set.

For any map $F \in \mathcal{F}$, the Rauzy induction will make the following four steps and then stop:

\begin{equation*}
\begin{pmatrix} \bar{A} &\bar{B} & \bar{C} & \bar{D} & \bar{E} \\
\bar{B} &\bar{C} & \bar{D} & \bar{E} & \bar{A}
\end{pmatrix}
\xrightarrow[]{A>E} 
\begin{pmatrix} E & \bar{A} &\bar{B} & \bar{C} & \bar{D}  \\
\bar{B} &\bar{C} & \bar{D} & {E} & \bar{A}
\end{pmatrix}
\xrightarrow[]{A>D}
\ldots 
\xrightarrow[]{A>B}
\begin{pmatrix} E & D &C & B & \bar{A}  \\
{B} &{C} & {D} & {E} & \bar{A}
\end{pmatrix}.
\end{equation*}
One can see that the map 
\begin{equation}\label{eq:not_simple_IET}
\begin{pmatrix} E & D &C & B  \\
{B} &{C} & {D} & {E} 
\end{pmatrix}.
\end{equation}
is not simple by looking at a corresponding foliation. This argument can be generalized for any value of $n \geq 5$. 
\end{proof}

We see that the class $\IETFfive$ has open sets of non-integrable maps in it. But by restricting to a smaller subset of specific combinatorics, one observes integrable behavior.

\begin{proposition}\label{thm:CETfour}
Consider the set $\mathcal{F}$ of fully flipped interval exchange transformations obtained as the image of $\CETfour$ under its natural inclusion in $\mathrm{FET}^5$. 
Take any $F \in \mathcal{F}$ such that ${\lambda}_j$ are independent over $\Q$ and the Rauzy induction stops. Then $F$ is integrable, and any periodic interval on the Poincaré-Rauzy section is flipped.
\end{proposition}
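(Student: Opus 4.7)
The plan is to follow the strategy of Proposition \ref{thm:CETthree}, working from the natural 5-interval combinatorics of $\mathcal{F}$ and analysing the relevant Rauzy graph.

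\textbf{Step 1 (combinatorial setup).} By Remark \ref{remark:CET_as_IET_plus_one}, every $F \in \mathcal{F}$ is represented by a fully-flipped generalized permutation $\sigma \in S_5^{\mathcal{A}}$; the irreducible $\sigma$ of this form constitute an explicit short list (the analogue of \eqref{eq:choice} for 4 intervals on the circle). All subsequent analysis takes place in the union of the connected components of these $\sigma$ in the Rauzy graph of $\mathrm{FET}^5$.

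\textbf{Step 2 (peeling at the stop).} Since the Rauzy induction on $F$ stops, we reach a vertex $\delta$ whose last letter on top and on bottom is the same letter $X \in \mathcal{A}$. The first-row argument in the proof of Proposition \ref{prop:integrable_3}, transposed to this setting, shows that $X$ must be flipped: were $k_X=+1$, no inverse Rauzy sequence could reach an irreducible fully-flipped $\mathcal{F}$-vertex listed in Step 1. Peeling off the $I_X$ cylinder yields a first-return map $G$ on four intervals, while $I_X$ itself is a flipped invariant cylinder contributing a flipped piece to the Poincaré-Rauzy section.

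\textbf{Step 3 (integrability of $G$).} The induction on $G$ stops because it is a factor of the stopping induction on $F$, and the lengths of the continuity intervals of $G$ are integer linear combinations of the $\lambda_j$, hence still rationally independent. If $G \in \IETFfour$, Proposition \ref{prop:just_before} applies directly and produces a simple Poincaré-Rauzy map on a further subinterval. If $G \in \mathrm{IET}^4$, one argues as at the end of the proof of Proposition \ref{prop:just_before}: a standard 4-IET whose Rauzy induction stops is reducible, and each reduced component is a rotation (possibly with a marked singularity), hence simple by case 2 (or case 3) of Definition \ref{def:simple}. Combining the flipped cylinder $I_X$ with a simple map on the complement yields a simple Poincaré-Rauzy map for $F$, i.e.\ integrability.

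\textbf{Main obstacle and the last claim.} The delicate point in Step 3 is that we must exclude the possibility that $G$ is a non-simple 4-IETF of the form \eqref{eq:not_simple_IET} used in Proposition \ref{prop:non-integrability-for-big-n}. I would verify this by the same backward-Rauzy-path technique as in Proposition \ref{prop:integrable_3}: starting from each candidate non-simple stopping permutation on 4 letters, run inverse Rauzy induction one step at a time and check that no such path ever re-enters the list of irreducible fully-flipped $\mathcal{F}$-vertices from Step 1. Because the admissible combinatorics for $\mathcal{F}$ are highly restricted, I expect this to reduce to a small finite case check, which can be carried out by hand on the list of Step 1 or certified by the Mercat program already employed for Lemma \ref{lemma:main_vector_preserved_lemma}. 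The assertion that every periodic interval in the Poincaré-Rauzy section of $F$ is flipped then follows by recursively applying the combinatorial obstruction of Step 2 at each stage of the Poincaré-Rauzy construction: a hypothetical non-flipped invariant interval would produce a generalized permutation with $Y$ in matching positions on top and bottom with $k_Y=+1$, whose backward Rauzy path cannot return to the $\mathcal{F}$-component.
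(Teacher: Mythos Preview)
Your overall strategy matches the paper's: enumerate the admissible $\mathcal{F}$-combinatorics, rely on a computer traversal of the relevant Rauzy-graph component to exclude the non-integrable pattern \eqref{eq:not_simple_IET}, and reuse the backward-path argument of Proposition~\ref{prop:integrable_3} for the flipped-cylinder claim. The paper does essentially the same, noting first that two of the four starting combinatorics (those where $0$ lies in the image of an extreme interval) already carry an invariant flipped interval and reduce immediately to Proposition~\ref{prop:just_before}; the substantive case is the combinatorics \eqref{eq:combinatorics_CETfour}, handled by the computer check.

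There is, however, a genuine slip in Step~3. The sentence ``The induction on $G$ stops because it is a factor of the stopping induction on $F$'' is not justified, and in fact is precisely the point at issue. If $G$ is an irreducible $4$-IET without flips (for instance with combinatorics \eqref{eq:not_simple_IET}) and its lengths are rationally independent, then by Keane's theorem the Rauzy induction on $G$ \emph{never} stops: $G$ is minimal and non-integrable. So your case split---``if $G\in\IETFfour$ apply Proposition~\ref{prop:just_before}; if $G\in\mathrm{IET}^4$ and its induction stops then it is reducible and hence simple''---silently assumes away exactly the obstruction you then flag as the Main Obstacle. The paper avoids this circularity by working directly on the $5$-interval Rauzy graph: the computer verifies that no stopping vertex in the component of \eqref{eq:combinatorics_CETfour} has residual $4$-letter combinatorics of type \eqref{eq:not_simple_IET}, and the remark ``the Rauzy induction can always be continued on an IET with flips (except for the case of the invariant flipped cylinder on the right)'' explains why it suffices to test only pure-IET residuals. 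Your backward-path proposal in the Main Obstacle paragraph would achieve the same thing, but it should replace the flawed case analysis of Step~3 rather than be appended to it.
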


\begin{proof}
Here is the list of all  possible combinatorics of the maps in $\mathcal{F}$ depending on which of four intervals contains $0$ in its image:
\begin{equation*}
1. \begin{pmatrix} \bar{A} &\bar{B} & \bar{C} & \bar{D} & \bar{E} \\
\bar{A} &\bar{C} & \bar{D} & \bar{E} & \bar{B}
\end{pmatrix}; 2. \begin{pmatrix} \bar{A} &\bar{B} & \bar{C} & \bar{D} & \bar{E} \\
\bar{B} &\bar{D} & \bar{E} & \bar{A} & \bar{C}
\end{pmatrix}; 
3. \begin{pmatrix} \bar{A} &\bar{B} & \bar{C} & \bar{D} & \bar{E} \\
\bar{C} &\bar{E} & \bar{A} & \bar{B} & \bar{D}
\end{pmatrix}; 
4. \begin{pmatrix} \bar{A} &\bar{B} & \bar{C} & \bar{D} & \bar{E} \\
\bar{D} &\bar{A} & \bar{B} & \bar{C} & \bar{E}
\end{pmatrix}.
\end{equation*}
Note that dynamically the cases $1.$ and $4.$ (as well as $2.$ and $3.$) are equivalent - it suffices to change the orientation of the initial interval.

Note that in the case $1.$ (and $4.$), the map $F$ has an invariant interval $I_A$($I_E$), and the restriction of $F$ on its complement belongs to $\mathrm{FET}^4$. Hence, $F$ is integrable by Proposition \ref{prop:just_before}.

Hence, th study of integrability of the maps in the family $\CETfour$ is reduced to the study of integrability of the maps with the combinatorics 

\begin{equation}\label{eq:combinatorics_CETfour}
\begin{pmatrix} \bar{A} &\bar{B} & \bar{C} & \bar{D} & \bar{E} \\
\bar{B} &\bar{D} & \bar{E} & \bar{A} & \bar{C}
\end{pmatrix}.
\end{equation}

See Figure \ref{fig:combinatorics_for_quadrilaterals} for the illustration of the map with such combinatorics, when $0 \in F(I_B  \cup I_C) $. Here, some additional work is needed to prove that the Rauzy induction can't stop in such a way that the 
restriction $G$ is a non-integrable IET (as in Proposition \ref{prop:non-integrability-for-big-n}). The only possibility for an IET on $4$ (or less) intervals to be non-integrable is exactly to have the combinatorics \eqref{eq:not_simple_IET}. The Rauzy induction can always be continued on an IET with flips (except for the case of the invariant flipped cylinder on the right). Hence, one can indeed suppose that $G \in \mathrm{IET}$. 

The end of the proof is computer assisted. We calculate explicitely the component of the Rauzy graph corresponding to the combinatorics 2. and we verify that the Rauzy induction never stops at the vertices of  Rauzy graph of combinatorial type \eqref{eq:not_simple_IET}.

One can show that $F$ has all of its periodic intervals flipped, by repeating the argument of Proposition \ref{prop:integrable_3}. 
\end{proof}

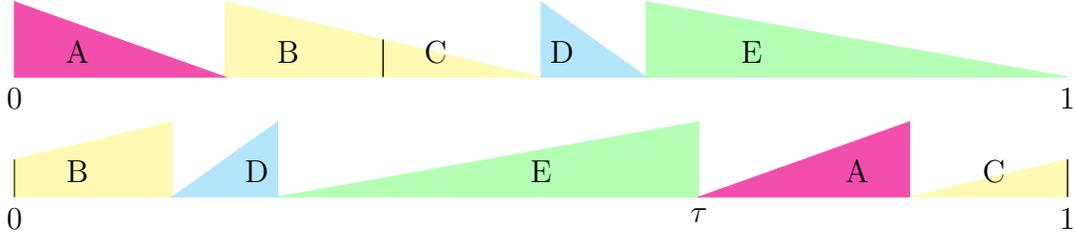
\begin{figure}
\centering
\begin{tikzpicture}[xscale=14]
\path[draw] (0,0) node[below]{$0$} -- cycle;

\path[draw] (1,0) node[below]{$1$} -- cycle;
\path[draw, fill=magenta,magenta, opacity=0.7] (0,0)--(0,1)--(0.2,0)--cycle;
\path[draw] (0.06,0.05) node[above]{A} -- cycle;

\path[draw, fill=yellow,yellow, opacity=0.3] (0.2,0)--(.2,1)--(0.5,0)--cycle;
\path[draw] (0.26,0.05) node[above]{B} -- cycle;
\path[draw] (0.4,0.05) node[above]{C} -- cycle;

\path[draw=black, line width=0.5] (0.35,0)--(0.35,.5) -- cycle;

\path[draw, fill=cyan,cyan, opacity=0.3] (0.5,0)--(.5,1)--(0.6, 0)--cycle;
\path[draw] (0.52,0.05) node[above]{D} -- cycle;

\path[draw, fill=green,green, opacity=0.3] (0.6,0)--(.6,1)--(1, 0) --cycle;
\path[draw] (0.7,0.05) node[above]{E} -- cycle;

\end{tikzpicture}

\begin{tikzpicture}[xscale=14]

\path[draw=black, line width=0.5] (0,0) node[below]{$0$}--(0,.5) -- cycle;

\path[draw, fill=yellow,yellow, opacity=0.3] (0,0)--(0,0.5)--(.15,1)--(0.15,0)--cycle;
\path[draw] (0.06,0.05) node[above]{B} -- cycle;

\path[draw, fill=cyan,cyan, opacity=0.3] (0.15,0)--(.25,0)--(0.25, 1)--cycle;
\path[draw] (0.23,0.05) node[above]{D} -- cycle;

\path[draw, fill=green,green, opacity=0.3] (0.25,0)--(.65,0)--(0.65, 1)--cycle;
\path[draw] (0.5,0.05) node[above]{E} -- cycle;
\path[draw] (0.65,0) node[below]{$\tau$} -- cycle;

\path[draw, fill=magenta,magenta, opacity=0.7] (0.65,0)--(0.85,0)--(0.85,1)--cycle;
\path[draw] (0.8,0.05) node[above]{A} -- cycle;

\path[draw, fill=yellow,yellow, opacity=0.3] (0.85,0)--(1,0)--(1,0.5)--cycle;
\path[draw] (0.93,0.05) node[above]{C} -- cycle;

\path[draw=black, line width=0.5] (1,0) node[below]{$1$} --(1,.5) -- cycle;

\end{tikzpicture}
\caption[]{An element of $\CETfour$ with the combinatorial type 2. Here the lengths $l_j$ and the parameter $\tau$ are such that the image of the second interval contains $0$. The second interval is hence cut into two intervals with labels $B$ and $C$.}\label{fig:combinatorics_for_quadrilaterals}
\end{figure}

We have seen that almost all of the maps in $\CETthree$ as well as in $\CETfour$ are integrable. It turns out that starting from $n=5$, the dynamics of $\CETn$ starts to become more complicated.

\begin{proposition}
There exists an open subset $U$ in the set of parameters $$\mathcal{P}=\left\{(l_1, \ldots, l_n) \in [0,1]^n| l_1 + \ldots + l_n=1\right\} \times \left\{ \tau \left| \right. \tau \in \mathbb{S}^1 \right\}$$ for 
the maps $\CETn, n \geq 5$ such that the dynamics of corresponding maps is non-integrable.
\end{proposition}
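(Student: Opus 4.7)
\emph{Proof plan.} The strategy is to adapt the construction of Proposition \ref{prop:non-integrability-for-big-n} to the restricted combinatorial setting of $\CETn$. Fix $n\ge 5$ and use the natural inclusion $\CETn \hookrightarrow \FETplus$ from Remark \ref{remark:CET_as_IET_plus_one}. Any $F_{\tau,l_1,\ldots,l_n}\in\CETn$, when cut at the marked point $0$, gives rise to a fully flipped IET on $n+1$ intervals (generically) whose generalized permutation has a very specific ``reversal-with-cyclic-shift'' structure: the top row records the cyclic order of the $I_i$ on $\Sph^1$ starting from $0$, while the bottom row records them in the reversed order, with a split determined by the position of $\tau$. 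The plan is to exhibit a non-empty \emph{open} subset of lengths $(l_1,\ldots,l_n)$ and parameters $\tau$ for which the modified Rauzy induction, applied to the corresponding element of $\FETplus$, halts at a configuration whose Poincaré--Rauzy first-return map has non-simple combinatorics of the type \eqref{eq:not_simple_IET}.

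First, I will treat the case $n=5$ in detail. I will enumerate the finitely many combinatorial types of the image of $\CETfive$ in $\FET^6$ (parametrized by which of the five image intervals contains $0$) and single out one type, call its permutation $\sigma_0$, for which the reversal structure most closely resembles the non-simple block $\begin{pmatrix} E & D & C & B \\ B & C & D & E\end{pmatrix}$. Then, mimicking the mechanism of Proposition \ref{prop:non-integrability-for-big-n}, I will choose lengths such that one particular interval (say $I_A$, corresponding to the split piece of the interval containing $0$) is strictly the longest and flipped, so that during the first few steps of Rauzy induction $A$ wins every comparison. After an explicit finite sequence of such ``bottom'' inductions, the induction will stop with an invariant flipped cylinder $I_{\bar A}$ on the right, and the complementary Poincaré--Rauzy map will be a pure IET on four intervals with combinatorics of the form \eqref{eq:not_simple_IET}, hence non-simple by the same block-decomposition argument used in Proposition \ref{prop:non-integrability-for-big-n}.

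For openness, I will observe that all the relevant comparisons in the Rauzy induction involve strict inequalities between positive linear combinations of $l_1,\ldots,l_5$ and $\tau$, and that the combinatorial type of the starting permutation $\sigma_0$ is determined by open conditions on the position of $\tau$ relative to the partition points. Hence the set of parameters forcing the Rauzy induction through the specified path is open in $\mathcal{P}$. Rational independence of the $l_i$ is a dense condition, so it intersects any open set and can be assumed in the course of the argument, ensuring that the non-simple first-return map is genuinely non-integrable rather than becoming degenerate. To extend to $n \ge 6$, I will choose the additional intervals $l_6,\ldots,l_n$ to be extremely short compared to $l_1,\ldots,l_5$; the Rauzy induction then first ``consumes'' these short intervals in a few preliminary steps, reducing the dynamics to an effective $\CETfive$-like configuration on which the $n=5$ argument applies verbatim. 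Openness is preserved since all reductions involve strict inequalities.

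The main obstacle is the purely combinatorial verification that the Rauzy induction starting from a $\CETn$-allowed permutation $\sigma_0$ really reaches a non-simple stopping vertex, rather than cycling through simple configurations as in Propositions \ref{thm:CETthree} and \ref{thm:CETfour}. The combinatorial constraints on $\CETn$ (full flipping, reversal structure) are rigid enough that one cannot simply invoke Proposition \ref{prop:non-integrability-for-big-n}; instead, one must explicitly trace the relevant cycles in the modified Rauzy graph of the connected component of $\sigma_0$. For $n=5$ this is a finite check that, in the spirit of Lemma \ref{lemma:main_vector_preserved_lemma} and Proposition \ref{thm:CETfour}, is most cleanly carried out with computer assistance using the Sage code of P.~Mercat already employed elsewhere in the paper. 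Once the bad cycle is identified for $n=5$, the induction step to $n\ge 6$ is elementary.
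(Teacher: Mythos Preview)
Your core strategy—choosing an open set of parameters in $\CETn$ so that the modified Rauzy induction stops at a configuration whose complementary first-return map is non-simple—is exactly the paper's approach. However, the paper's execution is far more direct and uniform in $n$; you are overcomplicating matters.

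The paper simply takes the open set $U_0=\{(l_1,\ldots,l_n,\tau):\, l_1>1-\tau>\max(l_2,\ldots,l_n)\}$. On this set the image of $I_1$ under $F$ contains $0$, so after cutting at $0$ the combinatorics is
\[
\begin{pmatrix}
\bar I_1^- & \bar I_1^+ & \bar I_2 & \cdots & \bar I_{n-1} & \bar I_n \\
\bar I_1^- & \bar I_2   & \bar I_3 & \cdots & \bar I_n     & \bar I_1^+
\end{pmatrix},
\]
with $|I_1^+|=1-\tau$ dominating each of $|I_2|,\ldots,|I_n|$. One then traces the induction by hand: $I_1^+$ is the (flipped) winner against $I_n, I_{n-1},\ldots,I_2$ in succession, each loser is un-flipped and moved left, and after exactly $n-1$ steps the induction halts with $\bar I_1^+$ last in both rows. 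The remaining block is
\[
\begin{pmatrix}
\bar I_1^- & I_n & I_{n-1} & \cdots & I_2 \\
\bar I_1^- & I_2 & I_3     & \cdots & I_n
\end{pmatrix},
\]
i.e.\ a flipped invariant cylinder $I_1^-$ followed by the non-flipped reversal block on $n-1$ letters. For $n\ge 5$ that block has at least four letters and is exactly the prototype \eqref{eq:not_simple_IET}, hence non-simple and the map is non-integrable. This single explicit computation works for all $n\ge 5$ at once; no computer check, no case enumeration, no separate extension step.

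Your plan has two concrete weaknesses relative to this. First, you anticipate a ``main obstacle'' requiring computer assistance on the $n=5$ Rauzy graph; the paper shows there is no obstacle—the relevant Rauzy path is short, explicit, and written down by hand. Second, your extension from $n=5$ to $n\ge 6$ by making $l_6,\ldots,l_n$ ``extremely short'' so that the induction ``first consumes these short intervals'' is not how the modified Rauzy induction behaves: the alphabet size is fixed throughout, intervals are never consumed, and shortness of an interval is irrelevant unless that interval happens to be the rightmost one in a row. That reduction sketch would need real work to become a proof, whereas the paper's uniform argument avoids the issue entirely.
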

\begin{proof}
Indeed, take $U_0:=\left\{(l_1, \ldots, l_n, \tau) \in \mathcal{P} \left|\right. l_1 > 1 - \tau > \max (l_2, \ldots, l_n) \right\}$. In this case the image of the first interval contains $0$ and the combinatorics of a correponding map is the following:
\begin{equation*}
\begin{pmatrix}
\bar{I}_{1}^{-}&\bar{I}_1^{+}&\bar{I}_2&\ldots&\bar{I}_{n-1}&\bar{I}_{n} \\
\bar{I}_{1}^{-}&\bar{I}_2&\bar{I}_3&\ldots&\bar{I}_{n}&\bar{I}_1^{+}
\end{pmatrix}.
\end{equation*}
Let $U$ be the intersection of $U_0$ with the set of the points in $\mathcal{P}$ such that the lengths $l_1-1+\tau, 1-\tau, l_2, \ldots, l_n$ don't have any rational relationships except for trivial ones.

For $F \in \CETn$ such that $(l_1, \ldots, l_m, \tau) \in U$, the corresponding Rauzy induction follows the path
\begin{equation*}
\begin{pmatrix}
\bar{I}_{1}^{-}&\bar{I}_1^{+}&\bar{I}_2&\ldots&\bar{I}_{n-1}&\bar{I}_{n} \\
\bar{I}_{1}^{-}&\bar{I}_2&\bar{I}_3&\ldots&\bar{I}_{n}&\bar{I}_1^{+}
\end{pmatrix}
\xrightarrow[]{I_1^+>I_n, \ldots, I_1^+ > I_2} 
\begin{pmatrix}
\bar{I}_{1}^{-}&I_n&{I}_{n-1}&\ldots&I_2&\bar{I}_{1}^{+} \\
\bar{I}_{1}^{-}&{I}_2&{I}_3&\ldots&{I}_{n}&\bar{I}_1^{+}
\end{pmatrix}.
\end{equation*}
Thus, for $n \geq 5$, the obtained first return map has two invariant flipped intervals and one interval on which the dynamics is given by a non-simple $(n-1)$-IET. Hence, the dynamics of $F$ is not integrable.
\end{proof}

\section{The properties of the orbits in triangular tiling billiards.}\label{sec:properties_tiling_billiards}

Now we get back to the properties of triangle tiling billiards and 
prove the results on their dynamics. For that, we will use the machinery prepared in previous sections as well as some additional arguments.

\subsection{Symbolic dynamics and combinatorial orbits.}\label{subs:definition_HW}

Fix a triangular tiling by the tiles congruent to the triangle $\Delta$ with vertices $A,B,C$ and sides $a,b,c$  on the plane . To any trajectory $\delta$ on this tiling one can associate a bi-infinite word $\omega(\delta)= \ldots w_{-2} w_{-1} w_0 w_1 w_2 \ldots $ in the alphabet $\mathcal{A}=\{a,b,c\}$ by writing the labels of sides in the tiling intersected along $\delta$. By fixing a starting triangle, one fixes a special slot in the symbolic word. Note that singular trajectories (passing by the vertices) may give  several codings with common past or future since they branch out.

One can say that the symbolic sequence $\omega(\delta)$ characterizes how the trajectory is seen with a naked eye (that can't precisely measure the angles but sees the sequence of triangles that are crossed). 

\begin{example}
On Figure \ref{figure: different behaviors} the first depicted trajectory is a closed trajectory corresponding to the sequence $\bar{\omega} \in \mathcal{A}^\Z$ where $\omega=bcababcaba$ and the names $a,b,c$ are given to the sides in the decreasing order of lengts. One can see that by fixing the starting triangle (grey triangle on the Figure \ref{figure: different behaviors}), one fixes the zero position symbol of the word $\omega$ and we have $w_0=b$.
\end{example}

\begin{definition}
Define $\mathcal{S} \subset \A^{\Z}$ to be the set of all bi-infinite words in the alphabet $\A$ that correspond to symbolic dynamics of (possibly, singular) trajectories in triangle tiling billiards. Define ${\mathcal{S}}_{\Delta}$ the analogous set for the triangle tiling billiard in a tiling defined by the fixed triangle $\Delta$. We call the elements in these sets \emph{symbolic orbits} of triangle tiling billiards.
\end{definition}

It is easy to see that $\mathcal{S}$ is strictly contained in $\mathcal{A}^{\Z}$. Indeed, the words in $\mathcal{S}$ don't contain $aa, bb$ or $cc$ as a subword.  for since a trajectory can't pass by the same side consequently. One can also prove that for $c$ being the shortest side of $\Delta$, the words in $\mathcal{S}_{\Delta}$ don't contain the subwords $cbc$ and $cac$. In general, the set $\mathcal{S}_{\Delta}$ is a "small" set: the corresponding dynamics has zero topological entropy and linear complexity. The precise understanding of the structure of $\mathcal{S}_{\Delta}$ is not an easy question, see for example the discussion about the Conjecture \ref{conj:tree}. 

\smallskip

For a fixed triangle $\Delta$, the study of the triangle tiling billiard with the tiles congruent to $\Delta$, is equivalent to that of the map $F_{\tau, l_1, l_2, l_3} \in \CETthree$, restricted onto some subset of the circle (as explained in paragraph \ref{subs:connections}), with $l_j$ being the normalized angles of $\Delta$ and $\tau$ a parameter. Since $F$ is fully flipped, then its square $T_{\tau, l_1, l_2, l_3}=F^2: \Sph^1 \rightarrow \Sph^1$ is an interval exchange transformation. An accelerated (two steps in one) triangle tiling trajectory (corresponding to $T$) goes through the triangles of the same orientation on the circumcircle. Define  for any pair $ w_0, w_1 \in \mathcal{A}$ the following subset of $\Sph^1$:

\begin{equation}\label{eq:sets_symbolic}
I_{\omega_0 \omega_1}:=\left\{
x \in \Sph^1 \left| \right. x \in I_{\omega_0}, F(x) \in I_{\omega_1} 
\right\}.
\end{equation}

Such a subset is a segment of continuity for $T$. Since $F$ has $3$ singular points on $ \Sph^1 $, then $T$ has at most $6$ singular points on $ \Sph^1 $. From this we see that at least $3$ of these subsets are empty. 

We associate to any symbolic orbit $\omega \in \mathcal{S}_{\Delta}$, a word $\Omega$ in the alphabet of vertices $\{A,B,C\}$ simply by replacing small letters by capital ones. Then for those $x \in \Sph^1 $ that correspond to the trajectories of triangle tiling billiard, $\overrightarrow{\Omega_0 \Omega_1}$ considered as a vector on the plane, lies in a following set: 
\begin{equation}\label{eq:possible directions}
\mathcal{V}=\left\{
\pm \overrightarrow{AB}, \pm \overrightarrow{BC}, \pm \overrightarrow{AC}
\right\}
\end{equation}

\begin{definition}
Fix a triangle $\Delta$ on the plane with positive orientation (such that the curve $ABC$ has the triangle $\Delta$ on its left). Let the origin $(0,0) \in \mathbb{R}^2$ be at the barycenter of $\Delta$. Consider a trajectory $\delta$ of a triangle tiling billiard starting in $\Delta$ and its symbolic orbit $\omega$ with an associated word $\Omega$ in the alphabet of vertices. Then the \emph{combinatorial orbit }of $\delta$ is a piecewise linear curve that connects by straight segments the following points in the following order:
\begin{equation*}
\ldots, -\sum_{j=0}^{k} \overrightarrow{{\Omega}_{-2(j+1)} {\Omega}_{-2(j+1)+1}}
 ,\ldots,-\overrightarrow{{\Omega}_{-2} {\Omega}_{-1}},0,\overrightarrow{{\Omega}_0 {\Omega}_1},\overrightarrow{{\Omega}_0 {\Omega}_1}+\overrightarrow{{\Omega}_2 {\Omega}_3}, \ldots, 
\sum_{j=0}^{k} \overrightarrow{{\Omega}_{2j} {\Omega}_{2j+1}}, \ldots
\end{equation*}
\end{definition}
One can easily see that the points where the combinatorial orbit  breaks (is not smooth) are exactly the barycenters of positively oriented triangles through which $\delta$ passes, and that the segments of the combinatorial orbit are parallel to the sides of the triangles in a tiling.

This curve was already studied by Hooper and Weiss in \cite{HW18}, where they were interested in rel leaves of some special class of translation surfaces, being one-parameter deformations of Arnoux-Yoccoz translation surface. By finding a triangle tiling billiard associated to this case (which was done in \cite{BDFI18}), one can see that its combinatorial orbits are abelianizations of other interesting curves, studied by McMullen \cite{M12} and Arnoux \cite{ABB11, A88}. See more on this case in paragraph \ref{subs:HW_subsec}.


\begin{example}
A combinatorial orbit for the first trajectory of Figure \ref{figure: different behaviors} is a union of segments connecting the consecutive results of the sums of vectors $\overrightarrow{BC}+\overrightarrow{AB}+\overrightarrow{AB}+\overrightarrow{CA}+\overrightarrow{BA}$.
\end{example}

Obviously, from Proposition \ref{prop:Dianacomeback} follows
\begin{lemma}\label{prop:HW}
A combinatorial orbit of $\delta$ is a closed (drift-periodic, linearly escaping) curve if and only if the $\delta$ is closed (drift-periodic, linearly escaping).  
\end{lemma}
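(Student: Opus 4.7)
The plan is to upgrade the geometric observation stated just before the lemma — that each break point of the combinatorial orbit $\gamma$ of $\delta$ is the barycenter of a positively oriented triangle crossed by $\delta$, and that each segment of $\gamma$ is parallel to a side of $\Delta$ with length at most $\mathrm{diam}(\Delta)$ — into a uniform Hausdorff tracking estimate between $\delta$ and $\gamma$, and then to read each of the three equivalences directly off this estimate together with Proposition \ref{prop:Dianacomeback}.

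First I would establish that $d_H(\delta,\gamma) \leq K(\Delta)$ for some constant depending only on $\mathrm{diam}(\Delta)$. Each break point of $\gamma$ is the barycenter of a tile visited by $\delta$, hence lies within $\mathrm{diam}(\Delta)$ of a point of $\delta$; between two consecutive break points, $\gamma$ is a single straight segment of length $|\overrightarrow{\Omega_{2j}\Omega_{2j+1}}| \leq \mathrm{diam}(\Delta)$ joining the barycenters of two positively oriented tiles $T_{2j}$ and $T_{2j+2}$ separated by a single intermediate tile $T_{2j+1}$, so this segment stays within $2\,\mathrm{diam}(\Delta)$ of $\delta$. Conversely each segment of $\delta$ lies inside one of the tiles $T_{2j}$, $T_{2j+1}$, $T_{2j+2}$, and hence within $\mathrm{diam}(\Delta)$ of a break point of $\gamma$.

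With the tracking bound, each of the three equivalences follows immediately. If $\delta$ is closed then by part 1 of Proposition \ref{prop:Dianacomeback} it visits only finitely many tiles, so $\omega(\delta)$ is periodic of some combinatorial period $N$; the sequence of barycenters of the $T_{2j}$ is therefore $N$-periodic, forcing the sum of the vectors $\overrightarrow{\Omega_{2j}\Omega_{2j+1}}$ over one period to vanish, which is precisely the statement that $\gamma$ is a closed curve. Conversely, if $\gamma$ is closed then $\gamma$ is bounded, so $\delta$ is bounded by the tracking bound, and part 3 of Proposition \ref{prop:Dianacomeback} yields that $\delta$ is closed. If $\delta$ is drift periodic with vector $v$, then $v$ maps its sequence of visited tiles to itself with a shift by $N$, hence maps the sequence of barycenters to itself with the same shift, and so translates $\gamma$ to itself by $v$; for the converse, a combinatorial orbit drift periodic with vector $v$ forces $T_{2(j+N)}$ to be the $v$-translate of $T_{2j}$, which in turn translates $\delta$ to itself. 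The linearly escaping case is then the exclusion of the other two: by the tracking bound $\delta$ lies in a bounded tube about a line $\ell$ iff $\gamma$ does, and the previous two equivalences already match the property of being neither closed nor drift periodic on both sides.

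The only nontrivial step is the identification of break points of $\gamma$ with barycenters of the positively-oriented visited tiles. The paper states it as easy, and indeed it rests on the fact that in a triangular tiling all positively oriented tiles are translates of $\Delta$ (they form a lattice), so the composition of the two reflections across the successive crossed sides $\omega_{2j}$ and $\omega_{2j+1}$ is always a translation; a careful bookkeeping of vertex labels shows that this translation vector equals $\overrightarrow{\Omega_{2j}\Omega_{2j+1}}$ under the convention in which $\Omega_i$ denotes the vertex of $\Delta$ opposite the crossed side $\omega_i$. The main obstacle to writing out a complete proof is therefore bookkeeping rather than mathematical content: once the label convention is nailed down the tracking bound and Proposition \ref{prop:Dianacomeback} do all the remaining work.
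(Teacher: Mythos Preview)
Your argument is correct and is exactly the expansion that the paper leaves implicit: the paper gives no proof at all beyond the phrase ``Obviously, from Proposition~\ref{prop:Dianacomeback} follows,'' so your Hausdorff tracking bound together with Proposition~\ref{prop:Dianacomeback} is precisely the content being alluded to. One small wording issue: do not call the linearly escaping case ``the exclusion of the other two,'' since non-linearly escaping trajectories also exist; your actual argument there is fine (you match the tube condition via the tracking bound and the ``not closed, not drift-periodic'' condition via the first two equivalences), just phrase it that way.
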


\subsection{Properties of periodic orbits.}\label{subs:properties_periodic}
Closed and drift periodic trajectories both have their symbolic orbits periodic. Although, the balance properties of each periodic word $\omega(\delta) \in \mathcal{S}$ show if $\delta$ is a periodic or a drift periodic trajectory. Indeed, take a finite word $s \in \mathcal{A}^L$ of length $L$ corresponding to the geometric period of $\delta$. Then $\omega=\bar{s}$ and $L \in 2 \Z$. The corresponding portion of a combinatorial orbit of length $\frac{L}{2}$ is a union of segments connecting the consecutive results in the sum $\overrightarrow{\Omega_0 \Omega_1}+ \overrightarrow{\Omega_2 \Omega_3}+ \ldots+ \overrightarrow{\Omega_{L-1} \Omega_{L}}$. Obviously,

\begin{proposition}
A trajectory $\delta$ in a triangle tiling billiard is closed (drift-periodic) if and only if a corresponding symbolic word $w(\delta)$ is periodic $w=\bar{s}$ and the sum $\overrightarrow{\Omega_0 \Omega_1}+ \overrightarrow{\Omega_2 \Omega_3}+ \ldots+ \overrightarrow{\Omega_{L-1} \Omega_{L}}$ is equal (not equal) to zero.
\end{proposition}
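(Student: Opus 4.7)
The plan is to reduce the claim directly to Lemma \ref{prop:HW}, which already encodes the key dynamical equivalence between $\delta$ and its combinatorial orbit. Once this reduction is in place, the statement becomes a transparent fact about cumulative sums of a periodic sequence of vectors: such a sum is bounded if and only if the sum over one period vanishes, and it is invariant under a nontrivial translation if and only if the sum over one period is nonzero.

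For the forward direction, suppose $\delta$ is closed. By Proposition \ref{prop:Dianacomeback}(1), $\delta$ visits each triangle at most once, so as a closed curve in the plane it crosses a finite cyclic list of edges. Hence $\omega(\delta)=\bar{s}$ is periodic; the period length $L$ is even by the pair-structure of the accelerated map $T=F^2$. By the definition of the combinatorial orbit, translating time by one period translates the orbit by the fixed vector
\[
\vec{v}=\overrightarrow{\Omega_0\Omega_1}+\overrightarrow{\Omega_2\Omega_3}+\cdots+\overrightarrow{\Omega_{L-2}\Omega_{L-1}},
\]
and translating time by $k$ periods translates it by $k\vec{v}$. By Lemma \ref{prop:HW}, the combinatorial orbit is itself closed, hence bounded, which forces $\vec{v}=0$. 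In the drift-periodic case, if $\delta$ admits a plane translation symmetry by $\vec{t}\neq 0$, then $\omega(\delta)$ is periodic and Lemma \ref{prop:HW} gives that the combinatorial orbit is drift-periodic with the same vector $\vec{t}$; identifying this drift with the period-sum one obtains $\vec{v}=\vec{t}\neq 0$.

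For the converse direction, assume $\omega(\delta)=\bar{s}$ is periodic of even length $L$. Then the combinatorial orbit is an infinite concatenation of identical blocks of $L/2$ vectors, and its large-scale behavior is entirely determined by $\vec{v}$. If $\vec{v}=0$, the partial sums $\sum_{j=0}^{m-1}\overrightarrow{\Omega_{2j}\Omega_{2j+1}}$ are periodic in $m$, so the vertex set of the combinatorial orbit is finite and the orbit traces a fixed closed polygonal path repeatedly; applying Lemma \ref{prop:HW} then yields that $\delta$ is closed. If instead $\vec{v}\neq 0$, the combinatorial orbit is invariant under translation by $\vec{v}$ and escapes to infinity along this direction, i.e.\ it is drift-periodic; invoking Lemma \ref{prop:HW} once more transfers drift-periodicity back to $\delta$.

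The only mild obstacle is in the converse direction: one must ensure that a periodic symbolic orbit with $\vec{v}=0$ yields a combinatorial orbit that is globally closed, not merely one that happens to return to the origin at one instant. This is handled by the periodicity of the partial sums noted above, which forces both forward and backward branches of the orbit to lie in a bounded set and to visit only finitely many break points. Similarly, one should observe that in the $\vec{v}\neq 0$ case the combinatorial orbit cannot accidentally be closed, since the sequence of partial sums then grows linearly in norm. With these remarks, Lemma \ref{prop:HW} applies in both directions and the equivalences follow.
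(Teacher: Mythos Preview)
Your proof is correct and is exactly the argument the paper intends: the paper does not give a written proof at all, it simply prefaces the proposition with ``Obviously,'' immediately after stating Lemma~\ref{prop:HW}, so the reduction you carry out via Lemma~\ref{prop:HW} and the elementary observation about cumulative sums of a periodic vector sequence is precisely what is meant. Your write-up just makes explicit the details the authors leave to the reader (including the use of Proposition~\ref{prop:Dianacomeback}(1) to get periodicity of $\omega$ in the closed case, and the linear growth of partial sums when $\vec v\neq 0$).
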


\begin{example}
For the third (drift-periodic) trajectory on Figure \ref{figure: different behaviors}, the coding $\omega=\bar{s}$ with $s=bababc$, and $\overrightarrow{BA}+\overrightarrow{BA}+\overrightarrow{BC}\neq 0$.
\end{example}

\begin{proposition}
Suppose that the symbolic coding of some trajectory $\delta$ in a triangle tiling billiard is periodic, i.e. there exists a word $s$ such that $\omega=\bar{s}$. Suppose additionally that $s$ has odd length. Then the trajectory $\delta$ is closed. 
\end{proposition}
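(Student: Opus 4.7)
The plan is to use Lemma~\ref{prop:HW}, which reduces the question to showing that the combinatorial orbit of $\delta$ is closed. Writing $p := |s|$ (odd) for the symbolic period of $\omega$, set $v_j := \overrightarrow{\Omega_{2j}\Omega_{2j+1}}$ for $j \in \Z$, so that the combinatorial orbit is the broken line whose consecutive step vectors are the $v_j$. The $p$-periodicity of $\omega$ immediately gives $v_{j+p} = v_j$ for all $j$, so the combinatorial orbit is a periodic broken line with drift $V := \sum_{j=0}^{p-1} v_j$ per $p$ consecutive steps. Hence it suffices to prove $V = 0$: the combinatorial orbit, and therefore $\delta$, will then be closed.

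The key step is a short parity cancellation. Viewing the vertices $A,B,C$ as points of $\R^2$ and writing $\Omega_k$ for the vertex associated to $\omega_k$, we have
\[
V \;=\; \sum_{j=0}^{p-1} (\Omega_{2j+1} - \Omega_{2j}) \;=\; \sum_{j=0}^{p-1} \Omega_{2j+1} \;-\; \sum_{j=0}^{p-1} \Omega_{2j}.
\]
Because $\gcd(2,p)=1$, both maps $j \mapsto 2j \pmod p$ and $j \mapsto 2j+1 \pmod p$ are bijections of $\Z/p\Z$; combined with the $p$-periodicity of $\Omega$, each of the two sums on the right-hand side equals $\sum_{k=0}^{p-1} \Omega_k$. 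Consequently $V = 0$, the combinatorial orbit is closed, and so is $\delta$.

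There is no substantial obstacle; the only subtle point is to verify that $p$ is indeed the correct number of step vectors to sum. This is ensured by the relation $v_{j+p} = v_j$ above: if the intrinsic step period $N$ of the sequence $(v_j)$ is a proper divisor of $p$, then $V = (p/N)V_N$ with $p/N$ odd, so $V = 0$ still forces $V_N = 0$. This also explains why the oddness assumption is essential: when $p$ is even, $j \mapsto 2j \pmod p$ fails to be bijective, the two sums need not coincide, and genuine drift-periodic trajectories can appear, as in the $6$-periodic example on the $30$-$60$-$90$ triangle shown in Figure~\ref{figure: different behaviors}.
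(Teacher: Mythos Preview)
Your proof is correct and, in fact, cleaner than the paper's. Both arguments reduce via Lemma~\ref{prop:HW} to showing that the sum $V=\sum_{j=0}^{p-1}\overrightarrow{\Omega_{2j}\Omega_{2j+1}}$ vanishes when $p=|s|$ is odd. The paper establishes this by induction on odd $p$: the base case $p=1$ gives $\overrightarrow{\Omega_0\Omega_0}=0$, and the step from $p$ to $p+2$ is handled by a somewhat ad hoc algebraic regrouping of the sum. You instead expand $V$ as a difference of two sums of vertex positions and observe that, since $\gcd(2,p)=1$, the index maps $j\mapsto 2j\bmod p$ and $j\mapsto 2j+1\bmod p$ are both bijections of $\Z/p\Z$, so each sum equals $\sum_{k=0}^{p-1}\Omega_k$ and the difference is zero. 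This rearrangement argument is shorter, more transparent, and makes the role of the oddness hypothesis immediately visible; the paper's induction obscures precisely this parity point. Your final paragraph about a possible smaller period $N\mid p$ is unnecessary (once $V=0$ the broken line closes after $p$ steps regardless), but it is not wrong.
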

\begin{proof}
Indeed, $\omega=\overline{s}=\overline{s^2}$ where $s=s_0 \ldots s_{l-1}, |s|=l, l\in 2\Z+1$. Define $s_j$ for all $j=l, \ldots, 2l-1$ by $s_{j+l}:=s_{j+l}]$. In order to prove the statement, it suffices to show, by Lemma \ref{prop:HW}, that the sum of the vectors $\overline{S}_{l}:=\sum_{k=0}^{l-1} \overrightarrow{s_{2k} s_{2k+1}}$ is equal to $0$. 

Proof is by induction on $l$ and uses only simple linear algebra. For $l=1$, we have $\overline{S}_{1}=\overline{s_0 s_0}=0$. Suppose the statement is proven for some odd $l$ and $\overline{S}_{l}=0$ for any word $\omega$. Let us prove it for $l+2$. 

Indeed, now we have $s^2=s_0 \ldots s_{l+1} s_{l+2} \ldots s_{2l+1}$ with $s_j=s_{j+l+2} \; \; \forall j \in [[0, l+1]]$. Then we have

\begin{multline*}
\overline{S}_{l+1}=\sum_{k=0}^{l+1} \overrightarrow{s_{2k} s_{2k+1}}=\sum_{k=0}^{\frac{l-3}{2}} \overrightarrow{s_{2k} s_{2k+1}} + \overrightarrow{s_{l-1} s_l}+\overrightarrow{s_{l+1} s_{0}}+ \sum_{k=\frac{l+3}{2}}^{l} \overrightarrow{s_{2k} s_{2k+1}} + \overrightarrow{s_{2l+2} s_{2l+3}} =\\
=  \left(\sum_{k=0}^{\frac{l-3}{2}} \overrightarrow{s_{2k} s_{2k+1}} + \overrightarrow{s_{l-1} s_0} +
\sum_{k=\frac{l+3}{2}}^{l} \overrightarrow{s_{2k} s_{2k+1}}
\right)-\overrightarrow{s_{l-1} s_0} + \overrightarrow{s_{l-1} s_l}+\overrightarrow{s_{l+1} s_{0}} + \overrightarrow{s_{2l+2} s_{2l+3}}=\\
= \overrightarrow{s_{l+1} s_{0}}+ \overrightarrow{s_0 s_{l-1}}+\overrightarrow{s_{l-1} s_l}+ \overrightarrow{s_{l} s_{l+1}}=0.
\end{multline*}
\end{proof}

\subsection{Generic behavior of trajectories in triangle tiling billiards.}
In this Section we use the results of previous Section \ref{sec:integrability_section} in order to characterize the qualitative behavior of triangle tiling billiards. The following Proposition describes the generic behavior of triangle tiling billiards and answers the following question. If one picks a random triangle and a random trajectory in a corresponding tiling, how does this trajectory look like ? The answer is: it is either closed, or linearly escaping with an irrational slope.

\begin{proposition}\label{thm:generic}
Let $C=\left\{
(l_1, l_2, l_3) \in {\R}^3_+ \left|\right. l_1+l_2+l_3=1
\right\}$ be a simplex of normalized angles of triangles $\Delta$, $l_j=\frac{\alpha_j}{\pi}$. Consider a subset of $\mathcal{C} \subset C$ of lengths independent over $\Q$. Consider the set of all pairs $(\delta, \Delta)$ such that the lengths of $\Delta$ belong to $\mathcal{C}$ and the trajectory $\delta$ does not pass through the circumcenters of the crossed triangles. Then the following holds:
\begin{itemize}
\item[1.]The orbit $\delta$ is not drift-periodic orbits.
\item[2.] If $\delta$ is closed then it has a symboling coding $\overline{s^2}$ for some word $s$ in the alphabet $a,b,c$ of odd length. Consequently, its period is equal to $4n+2$ for some $n \in \N^*$, and its combinatorial orbit has central symmetry.
\item[3.] If $\delta$ is not closed, it is linearly escaping. Its symbolic orbit is described by an infinite word $\omega$ that can be represented as an infinite concatenation of two finite words $\omega_1$ and $\omega_2$ in the alphabet $\mathcal{A}$ in some order corresponding to a sturmian sequence. 
\end{itemize}

\end{proposition}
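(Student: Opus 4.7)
The plan is to translate $\delta$ into the circle dynamics of the associated fully flipped map $F = F_{\tau, l_1, l_2, l_3} \in \CETthree$ via paragraph \ref{subs:connections}, where $d(\delta) = |\cos\pi\tau|$. Our hypothesis that $\delta$ avoids circumcenters is the condition $\tau \neq \tfrac{1}{2}$, so Proposition \ref{thm:CETthree} applies: $F$ is integrable, every periodic point of $F$ lies in some sub-interval $I_\ast$ of the Poincaré-Rauzy section whose $F$-first-return is a self-flip $\psi : y \mapsto c - y$ after exactly $N$ iterates of $F$, and every non-periodic point lies on a cylinder of rotation (possibly with a marked singularity), all intervals un-flipped. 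Moreover, by the construction of Rauzy induction the intervals $I_\ast, F(I_\ast), \ldots, F^{N-1}(I_\ast)$ are each contained in a single continuity interval among $I_a, I_b, I_c$, so the label map $\ell \circ F^j$ is constant on $I_\ast$ for all $j = 0, \ldots, N-1$.

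For point 1, drift-periodicity of $\delta$ requires, as recalled in paragraph \ref{subs:different}, the angles of $\Delta$ to be rationally dependent, contradicting our hypothesis. For point 3, if $\delta$ is not closed then the associated $F$-orbit is non-periodic and lives on a cylinder of rotation whose first-return is an irrational $2$- or $3$-interval rotation (by $\Q$-independence of the $l_i$'s). The coding is then classically sturmian on two letters (the $3$-interval variant reduces to sturmian on two composite letters after identifying the two pieces split by the marked singularity). Pulling the $F$-orbit back through the return writes $\omega(\delta)$ as an infinite sturmian concatenation of two finite words $\omega_1, \omega_2 \in \mathcal{A}^*$, and the classical bounded-displacement property of sturmian sequences combined with Lemma \ref{prop:HW} yields the linear escape of $\delta$.

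For point 2, let $\delta$ be closed and $x \in I_\ast$ the corresponding $F$-periodic point, with the exceptional fixed point $c/2$ of $\psi$ excluded as explained below. The minimum $F$-period of $x$ is then exactly $2N$, equal to the geometric period $p$. The label constancy of $\ell \circ F^j|_{I_\ast}$ applied to the two points $x$ and $c - x = F^N(x)$ of $I_\ast$ yields $\ell(F^{N+j}(x)) = \ell(F^j(x))$ for every $j$, so $\omega(\delta) = \bar{s}$ with $|s| = N$, matching the announced form $\overline{s^2}$. The parity of $N$ is forced by differentiation: every branch of $F$ has derivative $-1$ on $\Sph^1$, so $F^N$ has derivative $(-1)^N$; since $\psi = F^N|_{I_\ast}$ has derivative $-1$, $N$ must be odd, and therefore $p = 2N = 4n + 2$. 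The central symmetry of the combinatorial orbit drops out immediately: the closed combinatorial polygon splits into two identical partial-sum arcs, related by a $180^\circ$ rotation around their common endpoint.

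The main obstacle is the structural label-constancy property, i.e.\ the claim that $I_\ast$ together with its first $N-1$ $F$-iterates each lies inside a single continuity interval of $F$. This is essentially built into Definition \ref{def:MRI} of the modified Rauzy induction and the stopping criterion of Proposition \ref{prop:just_before}, but making it airtight requires a careful unpacking of the successive Rauzy cuts leading from the original combinatorics to the flipped vertex. A secondary subtlety is the exclusion of the fixed point $c/2$: a trajectory realizing it would have odd $F$-period $N$, contradicting the elementary parity requirement (an even number of edge crossings) for any closed tiling trajectory, so under $\Q$-independence such an orbit can only be drift-periodic and is already excluded by point 1. Once both these points are granted, the parity-via-derivative argument for point 2 and the sturmian re-coding for point 3 are essentially routine, as sketched above.
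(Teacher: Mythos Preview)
Your proof is correct and follows essentially the same route as the paper's own argument: both reduce to Proposition~\ref{thm:CETthree} (integrability when $\tau\neq\tfrac12$), identify closed orbits with the flipped intervals of the Poincar\'e--Rauzy section and escaping orbits with the rotation cylinders, and deduce the sturmian coding for the latter. Your derivative argument for the oddness of the return time $N$ (each branch of $F$ has slope $-1$, so $(-1)^N=-1$) is a clean explicit justification that the paper simply asserts, and your discussion of the label-constancy of the Rokhlin tower and the fixed-point exclusion makes explicit points the paper leaves implicit.
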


\begin{proof}
Assertion $1.$ has been proven in \cite{BDFI18}, see Proposition 2.15. 

Then, for any point in the set $\mathcal{C}$, and any trajectory $\delta$ not passing through the circumcenters, the Rauzy induction for a corresponding  $F=F_{\tau, l_1, l_2, l_3} \in \CETthree$ stops, by Theorem \ref{thm:first_ingredient} since $\tau \neq \frac{1}{2}$. By Theorem \ref{thm:CETthree}, $F$ is integrable with a simple first-return map on the Poincaré-Rauzy section. A point $x \in \mathbb{S}^1$ has a closed orbit if and only if it is contained in a flipped interval $I$ on this section, or in its orbit by $F$ (its Rokhlin tower). But $I$ maps onto itself by the first return map which, in restriction to $I$ is equal to $F^d, d=2n+1 \in 2 \N+1$. Then the period of the billiard orbit is exactly $2d = 2 (2n+1)=4n+2$, and its combinatorial orbit is centrally symmetric.

The escaping trajectories correspond to the orbits of the points on the Poincaré-Rauzy section which are not flipped. By integrability, they correspond to rotations. The symbolic coding of the first return map to the section Poincaré-Rauzy defines the words $\omega_j, j=1,2$. For the case of rotation with a marked point ($3$-IET) as in point $3.$ of Definition \ref{def:simple}, its symbolic dynamics can be reduced to the case of $2$-IET by considering a smaller Poincaré section.
\end{proof}

In paragraph \ref{subs:properties_periodic} we have proven that the odd length codings correspond to the periodic orbits. The converse is true as well. 

\begin{proposition}\label{thm:period}
Consider any closed trajectory  $\delta$ in a triangle tiling billiard. Then the minimal period of its symbolic coding $\omega(\delta)$ has an odd length. Consequently, its period is equal to $4n+2$ for some $n \in \N^*.$
\end{proposition}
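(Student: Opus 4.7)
The plan is to bootstrap Proposition \ref{thm:generic} from the generic case to the full statement by exploiting the openness of closed orbits.

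First, I would recall the stability property of closed orbits noted in paragraph \ref{subs:different}: if $(\delta, \Delta)$ is a closed trajectory, then any sufficiently small perturbation of $(\delta, \Delta)$ yields a closed trajectory in the perturbed triangle whose symbolic coding (sequence of crossed sides labelled in $\{a,b,c\}$) is identical to that of $\delta$, and whose geometric period (the number of triangles crossed before closing up) is the same. This is the essential input, and it follows from the "folding observation" recalled in paragraph \ref{subs:connections}: the combinatorial sequence of folded tiles and the fact that the trajectory closes are both stable conditions.

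Next, given an arbitrary closed trajectory $\delta$ in the tiling associated with $\Delta$, I would choose an arbitrarily small perturbation $(\delta', \Delta')$ such that simultaneously the normalized angles $l_1', l_2', l_3'$ of $\Delta'$ are independent over $\mathbb{Q}$ and the invariant $d(\delta')$ differs from the circumradius-multiple characterizing circumcenter passage, i.e.\ $\delta'$ does not pass through circumcenters of the crossed triangles. Such perturbations exist because the conditions to avoid (rational dependence among $l_1,l_2,l_3$, and $\tau = \tfrac12$) each cut out measure-zero subsets of the parameter space $(l_1,l_2,l_3,\tau)$, whereas the set of closed trajectories contains a full neighborhood of $(\delta, \Delta)$. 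Hence $(\delta', \Delta')$ falls under the hypotheses of Proposition \ref{thm:generic}, part 2: its symbolic coding is of the form $\overline{s^2}$ with $|s|$ odd, and its geometric period equals $4n+2$ for some $n \in \mathbb{N}^*$.

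Finally, since the symbolic coding and the geometric period are preserved by the perturbation, the coding of the original trajectory $\delta$ is likewise $\overline{s^2}$ with $|s|$ odd, and its period equals $4n+2$. This gives the claimed conclusion. The only subtle point, and arguably the main obstacle, is ensuring that the two genericity conditions (rational independence of angles and avoidance of circumcenters) can be realized simultaneously by a single small perturbation; this is handled by noting that the "bad" locus is a countable union of analytic hypersurfaces in the four-dimensional parameter space $(l_1, l_2, l_3, \tau)$, while the openness of closed orbits in $(\delta, \Delta)$ guarantees a four-dimensional neighborhood of admissible perturbations, so a generic point of this neighborhood satisfies both conditions at once.
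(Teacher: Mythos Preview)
Your proposal is correct and follows essentially the same approach as the paper: reduce to the generic situation of Proposition \ref{thm:generic} by a small perturbation, using the stability of closed orbits (Proposition \ref{prop:Dianacomeback} and the discussion in paragraph \ref{subs:different}) to guarantee that the symbolic coding and period are unchanged.

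The only minor difference is that you perturb \emph{both} the angles (to destroy rational dependence) and the invariant $\tau$ (to avoid circumcenter passage), whereas the paper's proof perturbs only the angles. The paper can get away with this because it first observes that the existence of a periodic orbit already forces the Rauzy induction to stop (Lemma \ref{lemma:minimality}), and that is the actual hypothesis driving the argument behind part 2 of Proposition \ref{thm:generic} (via Proposition \ref{prop:integrable_3}); the condition $\tau \neq \tfrac12$ in the statement of Proposition \ref{thm:generic} is there only to \emph{ensure} that Rauzy induction stops, which in the present situation is already guaranteed. Your version, perturbing both conditions, is perhaps the cleaner way to invoke Proposition \ref{thm:generic} as a black box. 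One cosmetic remark: the parameter space you describe is three-dimensional (since $l_1+l_2+l_3=1$), not four-dimensional, but this does not affect the argument.
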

\begin{proof}
The existence of a periodic trajectory implies that the Rauzy induction stops for the corresponding $F \in \CETthree$, by Lemma \ref{lemma:minimality}. If the Rauzy induction stops then either one is in the conditions of Proposition \ref{thm:generic} (the lengths are independent over $\Q$), and there is nothing to prove, or the lengths $l_j$ are rationally dependent. In this case, we perturb the point in $C$ in such a way that this dependence is destroyed. If the perturbation is small enough, the symbolic coding (as well as the length) of the periodic trajectory stay fixed, by Proposition \ref{prop:Dianacomeback}. One can then go back to applying Proposition \ref{thm:generic}.
\end{proof}

\begin{remark}
The periods of drift-periodic orbits are not necessarily of length $4n+2$. For example, take a triangle with lengths of the sides $a,b,c$ equal to $5$, $7$ and $8$ and $\tau \approx 1/2$. One can verify that the orbit passing through the middle of the side $b$ is drift-periodic with a drift period $24$.
\end{remark}

By looking at the simulations of trajectories, one can suggest a much stronger property of closed orbits which is the following conjecture which was first stated in \cite{BDFI18}, see Conjecture 5.3.

\begin{conjecture}[Tree conjecture]\label{conj:tree}
Let $\Lambda$ be as a union of all vertices and edges of all drawn triangles in a periodic triangle tiling.
Take any periodic closed trajectory $\delta$ of a corresponding triangle tiling billiard. It incloses some bounded domain $U \subset \mathbb{R}^2$ in the plane, $\partial U = \delta$ and $U \cap \Lambda$ is an embedding of some graph in the plane. Then this graph is a \emph{tree}. 
\end{conjecture}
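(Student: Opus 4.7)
The plan is to translate the tree conjecture into a single combinatorial identity via Euler's formula, to dispose of connectivity essentially for free, and then to attack the remaining identity $T_i=0$ (no tile strictly enclosed by $\delta$) using the circumcenter first integral together with the Rokhlin tower structure coming from Proposition \ref{thm:CETthree}.

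First I would do the bookkeeping. Assume generic position so that $\delta$ avoids tiling vertices; by Proposition \ref{prop:Dianacomeback} each tile, and hence each tiling edge, is crossed at most once. Write $V_i$ for the number of tiling vertices strictly inside $U$, $V_b=4n+2$ for the bouncing points on $\delta$, and $T_i$ for the number of tiles entirely contained in $U$. Each interior vertex has all six of its incident tiling edges in $G:=\bar{U}\cap\Lambda$ (each either continuing to another interior vertex or, if its tiling edge is crossed, terminating at a bouncing point), while each bouncing point has $G$-degree exactly one. Handshaking then gives $E(G)=3V_i+2n+1$, and the Euler formula $\chi(\bar{U})=1$ applied to the cell decomposition of the disc $\bar{U}$ by $\Lambda\cup\delta$ (with $V_i+V_b$ vertices, $E(G)+(4n+2)$ edges, and $T_i+(4n+2)$ faces) yields the identity $T_i=2(V_i-n)$. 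In particular $V_i\geq n$, and the tree condition $E(G)=|V(G)|-1$ collapses to the single statement $V_i=n$, equivalently $T_i=0$.

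Next I would establish connectivity of $G$ essentially for free. A component of $G$ consisting only of interior vertices would be a finite six-regular subgraph of the triangular tiling, which cannot exist, so every component must contain a bouncing point. For two consecutive bouncing points $p_k,p_{k+1}$ along $\delta$, the arc of $\delta$ between them is a chord of a single tile $T$; this chord either cuts off one vertex of $T$ (in which case $p_k$ and $p_{k+1}$ are both $G$-adjacent to that vertex, which lies in $V_i$) or cuts off one edge of $T$ (in which case $p_k,p_{k+1}$ are $G$-adjacent to the two endpoints of the retained edge, themselves joined in $G$ by that edge). Either way $p_k$ and $p_{k+1}$ share a component, and iterating around the closed curve $\delta$ collects all $4n+2$ bouncing points into one component, so $G$ is connected. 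The tree conjecture is thereby reduced to the acyclicity statement $T_i=0$.

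To attack $T_i=0$, suppose toward a contradiction that $T^{*}\subset U$ is an uncrossed tile, with vertices $v_1,v_2,v_3\in V_i$. The plan is to exploit the first integral of Proposition \ref{prop:Dianacomeback}: the circumcenters of all $4n+2$ crossed tiles lie at common signed distance $d(\delta)$ from $\delta$ and on a prescribed side. Folding the crossed tiles along $\delta$ places them on a common circumcircle, and the associated orbit of the corresponding $F\in\CETthree$ is periodic, sitting on a flipped interval of the Poincar\'e--Rauzy first-return map from Proposition \ref{thm:CETthree}. One then attempts to show that the existence of $T^{*}$ translates into a phantom gap in the Rokhlin tower over this flipped interval, incompatible with the combinatorial invariant $A_{\sigma}v^{\perp}$ of Lemma \ref{lemma:main_vector_preserved_lemma}. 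The hard part is precisely this last step: the local constraints around a hypothetical $T^{*}$ are weak (all three of its neighbors are forced to be cut, by chords separating the opposite vertex from the shared edge with $T^{*}$), whereas $V_i=n$ mixes all interior vertices of $U$ together globally. Two natural avenues are (a) induction on $n$ via a single step of the modified Rauzy induction, aiming to show that the induced smaller orbit encloses a region with exactly one fewer interior vertex; and (b) a translation-surface argument on the non-orientable suspension $X_F$ from Section \ref{sec:one-half}, where closed orbits at $\tau\neq 1/2$ correspond to flipped cylinders whose first-return combinatorics directly controls the tile census of $U$. Either route will require a geometric interpretation of Lemma \ref{lemma:main_vector_preserved_lemma} that we currently do not have.
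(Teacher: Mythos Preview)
The statement you are addressing is a \emph{conjecture}, not a theorem: the paper does not prove it. It records only that the obtuse case was settled in \cite{BDFI18} and leaves the general statement open. So there is no ``paper's own proof'' to compare with, and any proposal must stand on its own feet.

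Your Euler--formula bookkeeping and the connectivity argument are correct and give a genuinely useful reformulation: with period $4n+2$, each interior tiling vertex having degree $6$ in $G=\bar U\cap\Lambda$ and each bouncing point degree $1$, handshaking yields $E(G)=3V_i+2n+1$, and the disc Euler count gives $T_i=2(V_i-n)$; together with connectivity this shows that the tree property is equivalent to $T_i=0$, i.e.\ no tile is strictly enclosed by $\delta$. The connectivity step via consecutive bouncing points is fine, and the ``no isolated $6$-regular component'' observation is the right way to rule out stray interior components.

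The gap is exactly where you say it is, and it is not closed. Neither of your two avenues does any work as written. For route (a), one step of modified Rauzy induction acts on the parameters $(\tau,l_1,l_2,l_3)$ of $F\in\CETthree$, not on a specific closed orbit $\delta$ in the plane; there is no mechanism in the paper linking a single induction step to ``removing one interior vertex from $U$'', so the induction has no base to stand on. For route (b), the flipped cylinder on $X_F$ records the Rokhlin tower of the periodic interval, hence the symbolic word $s$ of length $2n+1$; but the enclosed-tile count $T_i$ is a \emph{planar} quantity (it depends on how the combinatorial orbit winds in $\R^2$, i.e.\ on abelianisation), and nothing in Proposition~\ref{thm:CETthree} or Lemma~\ref{lemma:main_vector_preserved_lemma} controls that winding. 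Invoking $A_\sigma v^\perp$ here is decorative: that invariant lives on the Rauzy graph of $\IETFfour$ and detects the hyperplane $\tau=\tfrac12$, whereas the Tree Conjecture is about \emph{all} $\tau$, precisely the regime where the invariant tells you nothing new. In short, you have reduced the conjecture to the clean statement ``$\delta$ encloses no full tile'', but you have not proved that statement, and the tools you cite from the paper do not obviously reach it.
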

In \cite{BDFI18} this conjecture is proven for obtuse triangles (the tree in question is in this case a chain). This conjecture has an even stronger form corresponding to the fact that any trajectory (not necessarily closed) fills in the subset of the plane that it occupies.

\subsection{Description of the family $\mathrm{CET}^{3}_{1/2}$ and Arnoux-Rauzy family.}\label{subs:HW_subsec}

The generic behavior of a map $F_{\tau, l_1, l_2, l_3}$ from $\CETthree$ has been understood thanks to Theorem \ref{thm:CETthree}: it is integrable, and corresponding billiard trajectories are either closed or linearly escaping. Although, there exists a subfamily of maps in $\CETthree$ with minimal behavior. Indeed, the authors of \cite{BDFI18} remark that for $\tau=\frac{1}{2}$ and the vector of lengths being chosen as

\begin{equation}\label{eq:Dianas-parameters}
(l_1,l_2, l_3)=\left(\frac{1-x^3}{2},\frac{1-x^2}{2}, \frac{1-x}{2}\right),
\end{equation}
for $x$ the real solution of the algebraic equation
\begin{equation}\label{eq:tribonacci}
x+x^2+x^3=1,
\end{equation}
the symbolic behavior of the corresponding triangle tiling billiard trajectories seems to be quiet different from the expected generic behavior - they seem to escape to infinity non-linearly. The authors in \cite{BDFI18} notice that these trajectories "approach Rauzy fractal", though the statement in their paper is euristic (based on computer simulations). The object obtained in \cite{BDFI18}, i.e. the rescaled limit of combinatorial orbits for these parameters, is exactly the same than that defined in  \cite{HW18} outside the context of triangle tiling billiards.\footnote{For more on the relationship between the work \cite{HW18} and triangle tiling billiards, see in \cite{BDFI18}.} Hooper and Weiss already conjectured the convergence (up to rescaling and a uniform affine coordinate change) of these combinatorial orbits to the Rauzy fractal in Hausdorff topology, see the discussion at the end of Section 4 in \cite{HW18}.  This conjecture remains open, is one of our main motivations for future research, as well as the understanding of the non-linear escape in triangle tiling billiards. This motivates this paragraph's discussion and the following 

\begin{definition}[\cite{A88}, \cite{AR91}]\label{def:AR}
The \emph{Arnoux-Rauzy family} is a $3$-parametric family of the maps in $\mathrm{CET}^6$ defined as follows. The circle is identified with the interval $[0,1)$ and is cut into $6$ intervals of lengths $\frac{x_j}{2}, j=1,2,3$, $\sum_j x_j=1$ : 
\begin{equation*}
[0,1)=\left[0, \frac{x_1}{2}\right) \cup \left[\frac{x_1}{2}, x_1 \right) \cup \left[x_1, x_1+\frac{x_2}{2}\right) \cup \left[x_1+\frac{x_2}{2}, x_1+x_2\right) \cup \left[1-x_3, 1-\frac{x_3}{2}\right) \cup \left[1-\frac{x_3}{2},1\right).
\end{equation*}
Each transformation from Arnoux-Rauzy family is a composition of two maps. First, the intervals of equal lengths are exchanged. Second, the circle is rotated by $\frac{1}{2}$. 
\end{definition}

\begin{remark}
The two maps  in this definition are non-commuting involutions.
\end{remark}

The famous Arnoux-Yoccoz example (with the parameters $(x_1,x_2,x_3)=(x,x^2,x^3)$ with $x$ being the solution of \eqref{eq:tribonacci}), belongs to this Arnoux-Rauzy family. 
The Arnoux-Rauzy family has been extensively studied \cite{A88, AR91, AHS16, AHS16-1} but, as far as we know, it was never noticed that this family has a natural square root in the set of fully flipped interval exchange transformations.

\begin{proposition}\label{prop:the_squares_are_Rauzy}
The set of the squares
\begin{equation*}
\left\{ 
F^2 \mid \; F_{\tau, l_1, l_2, l_3} \in \CETthree, \tau=\frac{1}{2}, l_1, l_2, l_3 < \frac{1}{2}
\right\}
\end{equation*}
 is exactly Arnoux-Rauzy family of interval exchange transformations on the circle. 
\end{proposition}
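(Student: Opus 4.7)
The plan is to exploit the natural decomposition $F = R_{1/2} \circ \sigma$, where $R_{1/2}$ denotes the rotation by $\tfrac{1}{2}$ and $\sigma$ is the piecewise involution that self-flips each of the three intervals $I_a, I_b, I_c$ onto itself. Both factors are involutions of $\Sph^1$, and the Arnoux-Rauzy map admits an analogous decomposition $T_{AR} = R_{1/2} \circ I_{\mathrm{swap}}$, as recorded in Definition \ref{def:AR} and the remark that follows it. Thus identifying $F^2$ with an AR map amounts to identifying the conjugate $\sigma R_{1/2} \sigma$ with an AR-type swap involution on the circle.

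First I would compute $F^2$ explicitly. Writing $\mu_j := \alpha_{j-1} + \alpha_j$ for the partition points $\alpha_i$ of $I_a, I_b, I_c$, the map $F$ sends $I_j$ onto the translate $I_j + \tfrac{1}{2} \pmod{1}$ via the orientation-reversing isometry $x \mapsto \mu_j + \tfrac{1}{2} - x$. Iterating gives $F^2(x) = x + (\mu_k - \mu_j) \pmod{1}$ on each sub-interval of continuity, where $j$ is such that $x \in I_j$ and $k$ is such that $F(x) \in I_k$. The hypothesis $l_j < \tfrac{1}{2}$ is equivalent to $l_i + l_k > \tfrac{1}{2}$ for the other two indices, which forces each translate $I_j + \tfrac{1}{2}$ to meet exactly two of the $I_k$'s. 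So each $I_j$ splits into exactly two $F^2$-continuity sub-intervals, yielding six in total.

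The main step is to verify that these six sub-intervals match the Arnoux-Rauzy partition. A direct computation (using the inequalities $l_i + l_j > \tfrac{1}{2}$ at each comparison of split points with partition points) shows that in cyclic order around the circle their lengths form three adjacent pairs of equal length, and that upon setting $x_j := 1 - 2 l_j$ these lengths are precisely $x_j/2$ arranged in the pattern of Definition \ref{def:AR}, up to the (inessential) choice of origin on $\Sph^1$. A parallel computation of the translation constants $\mu_k - \mu_j \pmod{1}$ yields exactly the Arnoux-Rauzy shifts $\tfrac{1}{2} \pm x_j/2$, with the sign assignment to be confirmed against the convention that the first piece of each pair translates by $+x_j/2 + \tfrac{1}{2}$ and the second by $-x_j/2 + \tfrac{1}{2}$.

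The reverse inclusion is then automatic, because the map $(l_1, l_2, l_3) \mapsto (1 - 2l_1, 1 - 2l_2, 1 - 2l_3)$ is a bijection between $\{(l_j) : \textstyle\sum l_j = 1, \ 0 < l_j < \tfrac{1}{2}\}$ and the AR parameter simplex $\{(x_j) : \textstyle\sum x_j = 1, \ x_j > 0\}$. The hard part will be the combinatorial bookkeeping of the third paragraph: tracking, for each $I_j$, on which side the $F$-preimages of the partition points lie, and verifying that the pieces of adjacent $I_j$'s glue together at $0$, $l_1$, and $l_1 + l_2$ to produce the three adjacent equal-length pairs demanded by the AR pattern. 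Once this combinatorial matching is made precise, the proposition follows.
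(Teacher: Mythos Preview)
Your proposal is correct and follows essentially the same route as the paper: a direct computation of $F^2$ on the six sub-intervals $I_{j,k}=\{x\in I_j:F(x)\in I_k\}$, checking that they come in three adjacent equal-length pairs of lengths $\tfrac{1}{2}-l_j$ and that the translation vectors match the Arnoux--Rauzy pattern under $x_j=1-2l_j$; the paper carries this out explicitly (with a picture) while you outline it and flag the bookkeeping as the main work. Your opening decomposition $F=R_{1/2}\circ\sigma$ into two involutions, mirroring the remark after Definition~\ref{def:AR}, is a nice conceptual framing the paper does not state, but you do not actually exploit it beyond motivation---the verification you describe is the same direct computation.
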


\begin{proof}
The proof is a direct computation. Without loss of generality, we can suppose that $l_3 \leq l_2 \leq l_1$ by changing the orientation and possibly replacing $\tau$ by $1-\tau$. Consider a following subdivision of initial intervals of continuity:
\begin{align*}
 I_1=(0, \frac{1}{2}-l_2) \cup (\frac{1}{2}-l_2,l_1)=: J_2^+ \cup J_3^-\\
 l_2=(l_1, \frac{1}{2}+l_1-l_3) \cup (\frac{1}{2}+l_1-l_3, l_1+l_2)=:J_3^+ \cup J_1^-\\
 l_3=(l_1+l_2, l_2+\frac{1}{2}) \cup (l_2+\frac{1}{2},1)=:J_1^+ \cup J_2^-.
 \end{align*}

Obviously, $F$ can be seen as IET with flips on the \emph{interval} $[0,1)$ with intervals of continuity $J_2^+, J_3^-, J_3^+, J_1^-, J_1^+, J_2^-.$ These intervals are exchanged following the permutation
$$
\begin{pmatrix} \overline{J_2^+} & \overline{J_3^-} & \overline{J_3^+} &  \overline{J_1^-} & \overline{J_1^+} & \overline{J_2^-}\\  \overline{J_3^+} & \overline{J_2^-} & \overline{J_1^+} & \overline{J_3^-} & \overline{J_2^+} & \overline{J_1^-}
\end{pmatrix}.
$$
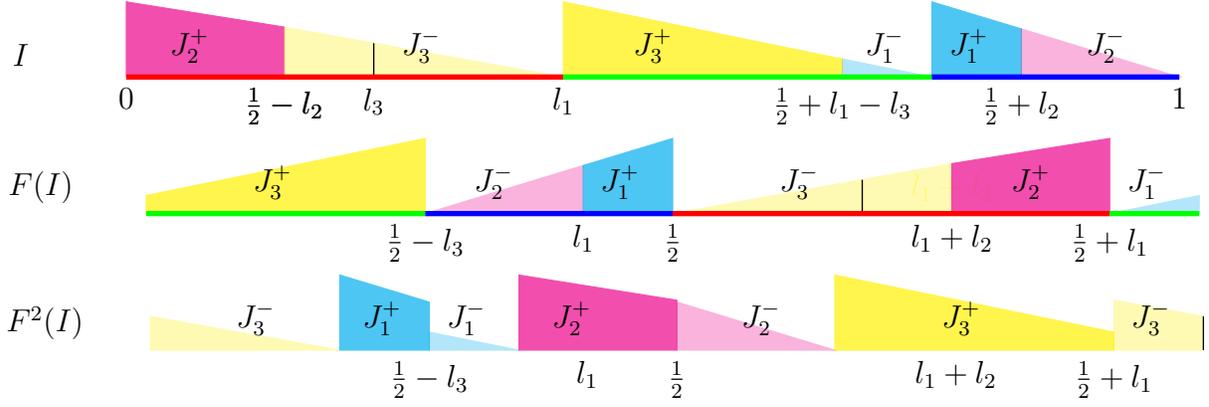
\begin{figure}
\centering
\begin{tikzpicture}[xscale=14]
\path[draw] (-0.1,0) node[above]{$I$} --cycle;
\path[draw, fill=magenta,magenta, opacity=0.7] (0,0)--(0.15,0)--(0.15,0.665)--(0,1)--cycle;
\path[draw, fill=yellow,yellow, opacity=0.3] (0.15,0)--(.415,0)--(0.15,0.665)--cycle;
\path[draw=black, line width=0.5] (0.235,0) node[below]{$l_3$}--(0.235,.45) -- cycle;
\path[draw, fill=yellow,yellow, opacity=0.7] (.415,0)--(.68,0)--(0.68, 0.24)--(.415,1)--cycle;
\path[draw, fill=cyan,cyan, opacity=0.3] (0.68,0)--(.765,0)--(0.68, 0.24)--cycle;
\path[draw, fill=cyan,cyan, opacity=0.7] (.765,0)--(0.85,0)--(.85,0.64)--(.765,1)--cycle;
\path[draw, fill=magenta,magenta, opacity=0.3] (0.85,0)--(.85,0.64)--(1,0)--cycle;
\path[draw] (0.15,0) node[below]{$\frac{1}{2}-l_2$} --cycle;
\path[draw] (0.15,0) node[below]{$\frac{1}{2}-l_2$} --cycle;
\path[draw] (0.68,0) node[below]{$\frac{1}{2}+l_1-l_3$} --cycle;
\path[draw] (0.85,0) node[below]{$\frac{1}{2}+l_2$} --cycle;
\path[draw] (0,0) node[below]{$0$} --cycle;
\path[draw] (1,0) node[below]{$1$} --cycle;
\path[draw] (.415,0) node[below]{$l_1$} --cycle;
\draw[-][draw=red, line width=2] (0,0) -- (.415,0);
\draw[-][draw=green, line width=2]  (.415,0) -- (0.765,0);
\draw[-][draw=blue, line width=2]  (0.765,0) -- (1,0);

\path[draw] (0.06,0.05) node[above]{$J^+_2$} -- cycle;
\path[draw] (0.28,0.05) node[above]{$J^-_3$} -- cycle;
\path[draw] (0.5,0.05) node[above]{$J^+_3$} -- cycle;
\path[draw] (0.72,0.05) node[above]{$J^-_1$} -- cycle;
\path[draw] (0.8,0.05) node[above]{$J^+_1$} -- cycle;
\path[draw] (0.93,0.05) node[above]{$J^-_2$} -- cycle;

\end{tikzpicture}

\begin{tikzpicture}[xscale=14]
\path[draw] (-0.1,0) node[above]{$F(I)$} --cycle;
\path[draw, fill=yellow,yellow, opacity=0.7] (0,0)--(0,0.24)--(0.265,1)--(.265,0)--cycle;
\path[draw, fill=magenta,magenta, opacity=0.3] (0.265,0)--(0.415,0)--(0.415,0.64)--cycle;
\path[draw, fill=cyan,cyan, opacity=0.7] (0.415,0)--(0.415,0.64)--(0.5, 1)--(0.5,0)--cycle;
\path[draw, fill=yellow,yellow, opacity=0.3] (.5,0)--(.765,.665) node[below]{$l_1+l_2$}--(0.765, 0)--cycle;
\path[draw=black, line width=0.5] (.68,0) --(.68,.45) -- cycle;
\path[draw, fill=magenta,magenta, opacity=0.7] (0.765,0)--(0.765,0.665)--(0.915,1)--(0.915,0)--cycle;
\path[draw, fill=cyan,cyan, opacity=0.3] (0.915,0)--(1,0.24)--(1, 0)--cycle;
\path[draw] (0.5,0) node[below]{$\frac{1}{2}$} -- cycle;
\path[draw] (0.265,0) node[below]{$\frac{1}{2}-l_3$} --cycle;
\path[draw] (0.915,0) node[below]{$\frac{1}{2}+l_1$} --cycle;
\path[draw] (0.765,0) node[below]{$l_1+l_2$} --cycle;
\path[draw] (.415,0) node[below]{$l_1$} --cycle;
\draw[-][draw=red, line width=2] (0.5,0) -- (.915,0);
\draw[-][draw=green, line width=2]  (.915,0) -- (1,0);
\draw[-][draw=green, line width=2]  (0,0) -- (.265,0);
\draw[-][draw=blue, line width=2]  (0.265,0) -- (.5,0);

\path[draw] (0.12,0.05) node[above]{$J^+_3$} -- cycle;
\path[draw] (0.33,0.05) node[above]{$J^-_2$} -- cycle;
\path[draw] (0.45,0.05) node[above]{$J^+_1$} -- cycle;
\path[draw] (0.62,0.05) node[above]{$J^-_3$} -- cycle;
\path[draw] (0.84,0.05) node[above]{$J^+_2$} -- cycle;
\path[draw] (0.95,0.05) node[above]{$J^-_1$} -- cycle;
\end{tikzpicture}

\begin{tikzpicture}[xscale=14]
\path[draw] (-0.1,0) node[above]{$F^2(I)$} --cycle;
\path[draw, fill=yellow,yellow, opacity=0.3] (.915,0)--(.915,0.665)--(1,0.45)--(1,0)--cycle;
\path[draw=black, line width=0.5] (1,0) --(1,.45) -- cycle;
\path[draw, fill=yellow,yellow, opacity=0.3] (0,0)--(0,0.45)--(.18,0)--cycle;
\path[draw, fill=cyan,cyan, opacity=0.7] 
(0.18,0)--(0.18,1)--(0.265, 0.64)--(0.265,0)--cycle;
\path[draw, fill=cyan,cyan, opacity=0.3] 
(0.265,0)--(0.265,0.24)--(0.35,0)--cycle;
\path[draw, fill=magenta,magenta, opacity=0.7] 
(0.35,0)--(0.35,1)--(.5,0.665)--(.5,0)--cycle;
\path[draw, fill=magenta,magenta, opacity=0.3] 
(.5,0)--(.5,.64)--(.65,0)--cycle;
\path[draw, fill=yellow,yellow, opacity=0.7] 
(.65,0)--(.65, 1)--(.915, .24)--(.915,0)--cycle;
\path[draw] (0.5,0) node[below]{$\frac{1}{2}$} -- cycle;
\path[draw] (0.265,0) node[below]{$\frac{1}{2}-l_3$} --cycle;
\path[draw] (0.915,0) node[below]{$\frac{1}{2}+l_1$} --cycle;
\path[draw] (0.765,0) node[below]{$l_1+l_2$} --cycle;
\path[draw] (.415,0) node[below]{$l_1$} --cycle;

\path[draw] (0.1,0.05) node[above]{$J^-_3$} -- cycle;
\path[draw] (0.22,0.05) node[above]{$J^+_1$} -- cycle;
\path[draw] (0.3,0.05) node[above]{$J^-_1$} -- cycle;
\path[draw] (0.4,0.05) node[above]{$J^+_2$} -- cycle;
\path[draw] (0.58,0.05) node[above]{$J^-_2$} -- cycle;
\path[draw] (0.77,0.05) node[above]{$J^+_3$} -- cycle;
\path[draw] (0.95,0.05) node[above]{$J^-_3$} -- cycle;

\end{tikzpicture}
\caption[]{The action of the map $F$ and its square $T=F^2$ on the subdivision of $\mathbb{S}^1$ by six intervals $J_j^{\pm}, j=1,2,3$. The three levels of the Figure correspond to the iterations of $F$ ($F^0=\mathrm{Id}, F^1=F, F^2=T$). One can see that $F^2 \in \mathrm{CET}^6$ and $F^2 \in \mathrm{IET}^7$. Indeed, one has to make a cut $J_3^-=(\frac{1}{2}-l_2, l_3) \cup (l_3,l_1)$ in order to pass from the CET on $6$ intervals to an IET on $7$ intervals. The intervals of the same length in the subdivision neighbour each other on the first level as well as on the last level, after applying $T$ but their order in a couple is reversed, exactly defining the Arnoux-Rauzy family.}\label{fig:squaredmap}
\end{figure}

Applying once more the map $F$ to the image of the interval, one obtains the final result : $T=F^2$ is such that $F^2 \in \mathrm{CET}^6$ and $F^2 \in \mathrm{IET}^7$. The order of the corresponding permutation of $6$ intervals on the circle is represented on the Figure \ref{fig:squaredmap}.

Define $I_{j,k}$ as 
\begin{equation}\label{eq:symbolicdefinitiontwosteps}
I_{j,k}=\left\{
x \in I_j \left| \right. F(x) \in I_k
\right\}.
\end{equation}
for $j,k=1,2,3$. Then one can see that exactly 
\begin{align*}
J_1^{-}=I_{2,3}, \;\; J_1^{+}=I_{3,1};\\
J_2^{-}=I_{3,1},  \; \; J_2^{+}=I_{1,3};\\
J_3^{-}=I_{1,2}, \;\;  J_3^{+}=I_{2,1}
\end{align*}

and that these intervals come by couples of equal length : 
$$|I_{1,2}|=|I_{2,1}|=\frac{1}{2}-l_3, |I_{1,3}|=|I_{3,1}|=\frac{1}{2}-l_2, |I_{3,2}|=|I_{2,3}|=\frac{1}{2}-l_1.$$

Combinatorics and the lengths defined the map $T=F^2$ and show that this is exactly a map from the Arnoux-Rauzy family and that all the maps are represented. The parameters $x_j$ in Defintion \ref{def:AR} are given by the affine change from the lengths of flipped intervals $I_j, j=1,2,3$: 
\begin{equation}\label{eq:relation_x_and_l}
x_j=1-2l_j, j=1,2,3.
\end{equation}
Here $x_1 \geq x_2 \geq x_3$.
\end{proof}

\begin{remark}
The fact that the intervals split into couples of equal length for the square of the map, is geometrically obvious from the symmetry, for any map in $\mathrm{CET}^n_{\frac{1}{2}}$, Indeed, the lengths  correspond to the lengths of the intervals that an inscribed polygon and its reflection with respect to its circumcenter cut out on the circumscribed circle (the intervals $I_{i,j}$ and $I_{j,i}$ being centrally symmetric, they have equal lengths). Although, the combinatorics has to be precised, see Proposition above.
\end{remark}

Let us note that for any $F \in \CETthreehalf$ corresponding to an obtuse triangle its dynamics is completely periodic.

\begin{proposition}\label{prop:obtuse}
For any $F \in \CETthreehalf$ such that there exists $l_j>\frac{1}{2}$, $F$ is completely periodic.
\end{proposition}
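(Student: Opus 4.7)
Assume without loss of generality that $l_1>1/2$. My plan is to exhibit, directly from the explicit formula of Definition \ref{def:fully_flpped_3_interval_exchange_transformations_on_the_circle:4}, two disjoint invariant sub-intervals of $I_1$ on which $F$ acts as an affine involution, and then to show that every forward orbit is absorbed into their union in finitely many steps.

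Because $l_1+1/2>1$, the affine map $x\mapsto -x+l_1+1/2\pmod 1$ on $I_1=[0,l_1)$ splits into two branches. Setting $J:=[0,\,l_1-1/2]$ and $K:=[1/2,\,l_1)$, the formulas $F|_J(x)=(l_1-1/2)-x$ and $F|_K(x)=(l_1+1/2)-x$ show that $F$ sends $J$ and $K$ onto themselves as affine involutions (reflections about the midpoints $(l_1-1/2)/2$ and $l_1/2$). Hence $F^2=\mathrm{id}$ on $J\cup K$, and every point there is either a fixed point or part of a $2$-cycle. This is the key structural observation and requires no case analysis: it follows immediately from the wrap-around caused by $l_1>1/2$.

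The remaining task is to show that every forward orbit starting in $\mathbb{S}^1\setminus(J\cup K)=(l_1-1/2,\,1/2)\cup I_2\cup I_3$ lands in $J\cup K$ after a bounded number of iterations. On the middle piece $(l_1-1/2,\,1/2)\subset I_1$, $F$ is the affine map $x\mapsto(l_1+1/2)-x$ with image in $(l_1,\,1)=I_2\cup I_3$; on $I_2$ and $I_3$ the images are arcs whose mod-$1$ wrap (again forced by $l_1>1/2$) meets $J$. A short case-by-case check, organized most cleanly by passing to the $4$-interval IETF representation of $F$ obtained by cutting $I_1$ at $F^{-1}(0)=l_1-1/2$ (cf.\ Remark \ref{remark:CET_as_IET_plus_one}), shows that $J$ appears as a reducible invariant flipped block and that Rauzy induction on the remaining three intervals terminates; so by Proposition \ref{prop:just_before} and Proposition \ref{prop:integrable_3} the Poincar\'e--Rauzy map is simple and consists only of flipped periodic cylinders, which together with $J$ and $K$ exhaust $\mathbb{S}^1$.

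The main delicate point is that when $l_1>1/2$ the arcs $F(I_1)$ and $F(I_2)$ necessarily overlap, so $F$ fails to be a global bijection in the strict sense; accordingly one must interpret ``completely periodic'' as ``every forward orbit is eventually periodic,'' which matches the geometric picture of $\CETthreehalf$ (trajectories through circumcenters in an obtuse triangle, where the circumcenter lies outside $\Delta$ and the geometric dynamics is defined only on a restricted sub-domain — precisely the sub-domain absorbed into $J\cup K$). Verifying the absorption is the only place where a case analysis is needed, and it is the main obstacle to a fully uniform proof.
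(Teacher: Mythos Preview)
Your identification of the two invariant flipped intervals $J=[0,l_1-1/2]$ and $K=[1/2,l_1)$ is correct and is exactly what the paper does first. But the rest of the argument contains a genuine error.

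The map $F\in\CETthreehalf$ \emph{is} a bijection of $\mathbb{S}^1$; the images $F(I_a),F(I_b),F(I_c)$ are disjoint arcs covering the circle (this is what ``shift by $\tau$, then flip each interval in place'' means). Your claim that $F(I_1)$ and $F(I_2)$ overlap is simply false, and so is the consequence you draw from it. Since $J\cup K$ is invariant and $F$ is a bijection, the complement $(l_1-1/2,1/2)\cup[l_1,1)$ is \emph{also} invariant: orbits starting there never enter $J\cup K$. The whole ``absorption'' strategy collapses, and your reinterpretation of ``completely periodic'' as ``eventually periodic'' is both unnecessary and incorrect. A direct computation shows $F\bigl((l_1-1/2,1/2)\bigr)=(l_1,1)$, $F(I_b)=(l_1-1/2,\,l_1+l_2-1/2]$ and $F(I_c)=(l_1+l_2-1/2,\,1/2]$, so the complement is exchanged with itself.

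Your fallback approach via Propositions~\ref{prop:just_before} and~\ref{prop:integrable_3} is closer in spirit to the paper but does not close the gap. Those results give \emph{integrability}, not complete periodicity: an integrable IETF may have rotation cylinders in addition to flipped periodic ones, and Proposition~\ref{prop:integrable_3} (which in any case is stated for maps in $\CETthree$, not for the reduced $3$-IETF you obtain) only says that invariant intervals are flipped, not that rotation cylinders are absent. The paper handles this by identifying the restriction of $F$ to the complement as the fully flipped $3$-IETF with combinatorics $\begin{pmatrix}\bar A&\bar B&\bar C\\ \bar B&\bar C&\bar A\end{pmatrix}$ and then checking, via the explicit Rauzy graph component in Figure~\ref{fig:component_ABC_BCA}, that every stopping point of the induction is purely periodic. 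That last verification is precisely the missing step in your argument.
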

\begin{proof}
One can suppose $l_1>\frac{1}{2}$. Then, the intervals $I_1:=(0, l_1-\frac{1}{2})$ and $I_2:=(\frac{1}{2}, l_1)$ are invariant by $F$ and consist of periodic orbits of length $2$. The dynamics of $F$ restricted on the interval $[0,1]\setminus \left( I_1 \cup I_2 \right) $ one can see that this dynamics, is equivalent to the dynamics of an $3$-IET with flips with combinatorics 
\begin{equation}\label{eq:combi}
\begin{pmatrix}
\bar{A}&\bar{B}&\bar{C}\\
\bar{B}&\bar{C}&\bar{A}
\end{pmatrix}.
\end{equation}
Such a system is integrable by Proposition\ref{prop:ietfthree_integrability} and, as can be explicitely verified in this case, via a more precise study of the corresponding component of the Rauzy graph, completely periodic. See Figure \ref{fig:component_ABC_BCA} for illustration.

\begin{figure}
\centering
\includegraphics[scale=0.5]{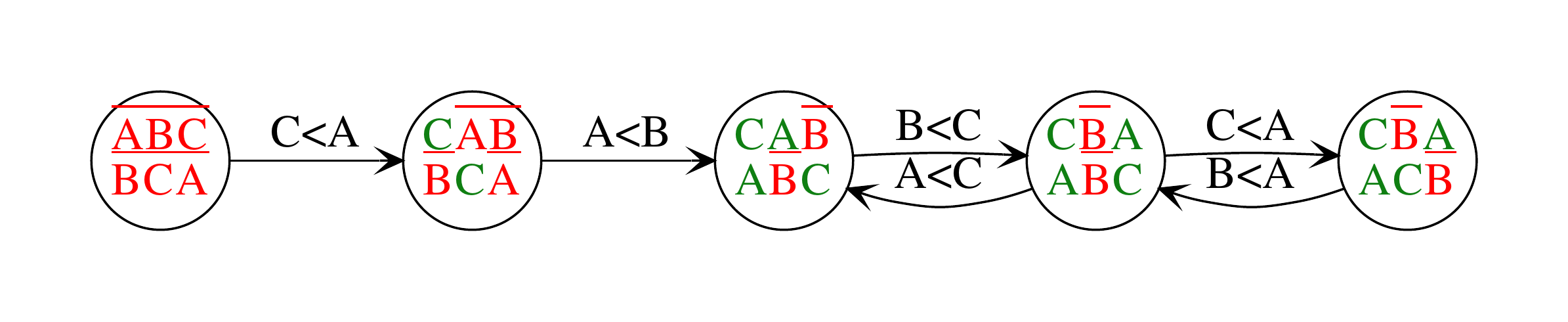}
\caption{Connected component of the vertex \eqref{eq:combi} in the Rauzy graph of $\IETFthree$. }\label{fig:component_ABC_BCA}
\end{figure}
\end{proof}

\subsection{Non-linearly escaping orbits and the Rauzy gasket.}
The hero of this paragraph is the following object. 

\begin{definition}[\cite{AS13}]\label{defn:Rauzy_gasket}
Consider a point $(x_1, x_2, x_3) \in \R^3_+$ such that $x_1+x_2+x_3=1$. Apply the following algorithm: 
\begin{itemize}
\item[1.] If one of the entries, say $x_1$, is greater then the sum of the two smaller ones, obtain a new point $(x_1-x_2-x_3, x_2, x_3)$ with positive entries. Analogically, for $x_2>x_1+x_3 $ (or $x_3>x_1+x_2$), one defines a new point $(x_1,x_2-x_1-x_3,x_3)$ (or $(x_1,x_2,x_3-x_1-x_2)$).
\item[2.] Rescale a new point so that the sum of the entries is equal to $1$.
\end{itemize}
The \emph{Rauzy gasket} $\boldsymbol{\mathcal{R}}$ is the set of points in the simplex on which this algorithm can be applied infinitely many times, i.e. the step $1.$ is always possible.
\end{definition}

The Rauzy gasket  $\boldsymbol{\mathcal{R}}$ has zero Lebesgue measure \cite{AS13} and its properties were extensively studied in \cite{AR91, AHS16, AHS16-1}. 
The relationship of the set $\boldsymbol{\mathcal{R}}$ to the triangle tiling billiards is the following. Proposition \ref{prop:the_squares_are_Rauzy} shows that the study of the dynamics of trajectories of acute triangle tiling billiards passing throught the circumcenter is equivalent to the study of the dynamics of Arnoux-Rauzy family. It is well known \cite{AR91} that a corresponding $F \in \CETthreehalf$ is minimal if and only if the corresponding triple belongs to the Rauzy gasket,  $(x_1, x_2, x_3) \in \boldsymbol{\mathcal{R}}$. Here $x_j$ and $l_j$ are related by \eqref{eq:relation_x_and_l}. One can ask ourselves, what is the dynamics of the trajectories corresponding to the parameters from the Rauzy gasket ? We conjecture that all of these trajectories escape non-linearly to infinity, and we prove this for a full measure set of points in  $\boldsymbol{\mathcal{R}}$ (with respect to the Avila-Hubert-Skripchenko measure defined in \cite{AHS16}). Moreover, there are no other non-linearly escaping trajectories, other than that coming from the Rauzy gasket and passing through the circumcenters.

%

\begin{proposition}\label{thm:when_escape}
Suppose that there exists a pair $(\delta, \Delta)$ such that a trajectory $\delta$ in a triangle tiling billiard with the tiles congruent to $\Delta$, is non-linearly escaping. Then
\begin{itemize}
\item [1.] the triangle $\Delta$ is acute,
\item [2.] the trajectory $\delta$ passes through the circumcenters of all the triangles that it crosses,
\item[3.] the triple of $x_j$ defined by \eqref{eq:relation_x_and_l}, where $l_j$ are the normalized angles of $\Delta$, belongs to the Rauzy gasket $\mathcal{R}$.

Moreover, if the pair $(\delta, \Delta)$ are such as in $1.-3.$ above and the correponding map in $\CETthree$ is uniquely ergodic, then the trajectory $\delta$ is 
non-linearly escaping.
\end{itemize}
\end{proposition}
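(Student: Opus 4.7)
The plan is to prove conditions (1)--(3) by combining structural results from earlier sections with the classical theory of the Arnoux--Rauzy family, and to prove the moreover statement via an ergodic-theoretic Birkhoff argument on the combinatorial orbit of paragraph \ref{subs:definition_HW}.

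For the necessity direction, I would proceed in four steps. First, by \cite{BDFI18}, $\mathbb{Q}$-dependent angles force every trajectory to be closed or drift-periodic; hence non-linear escape implies $(l_1,l_2,l_3)$ is $\mathbb{Q}$-independent. Second, under $\mathbb{Q}$-independence, Proposition \ref{thm:generic} shows that any trajectory not passing through the circumcenters is closed or linearly escaping; hence $\delta$ must pass through the circumcenters, proving (2) and forcing $\tau = 1/2$ in the associated $F \in \CETthree$. Third, by Proposition \ref{prop:obtuse}, obtuse $\Delta$ makes $F \in \CETthreehalf$ completely periodic, which is incompatible with escape; so $\Delta$ must be acute, proving (1). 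Fourth, with $\tau = 1/2$ and $\Delta$ acute, Proposition \ref{prop:the_squares_are_Rauzy} identifies $F^2$ with a map in the Arnoux--Rauzy family of parameters $x_j = 1 - 2l_j$, and by \cite{AR91} such a map is minimal iff $(x_1,x_2,x_3) \in \boldsymbol{\mathcal{R}}$; non-minimality would, via Lemma \ref{lemma:minimality} and an iterated application of the ideas of Propositions \ref{prop:just_before} and \ref{prop:integrable_3} to the $\mathrm{IETF}^4$-embedding of $\CETthreehalf$, yield a simple Poincaré--Rauzy map and hence only closed or linearly escaping orbits. Therefore $(x_1,x_2,x_3) \in \boldsymbol{\mathcal{R}}$, which is (3).

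For the moreover direction, assume $(\delta,\Delta)$ satisfies (1)--(3) with the corresponding $F \in \CETthreehalf$ uniquely ergodic. Minimality rules out closed and drift-periodic trajectories, so $\delta$ is escaping. To exclude linear escape I would pass to the combinatorial orbit via Lemma \ref{prop:HW}: its position after $N$ steps is the Birkhoff sum $S_N(x) = \sum_{k=0}^{N-1} \psi((F^2)^k x)$ of a function $\psi \colon \mathbb{S}^1 \to \mathbb{R}^2$ with values in the six-element set $\mathcal{V}$, determined by the pair of sides crossed starting from $x$. Unique ergodicity yields $N^{-1} S_N \to \int \psi \, d\mu$ uniformly in $x$. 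The vanishing $\int \psi \, d\mu = 0$ is ensured by the $\mathbb{Z}/2$-symmetry at $\tau = 1/2$: concretely, the pairing of intervals $J_i^+$ and $J_i^-$ of equal length with opposite displacement vectors (see Figure \ref{fig:squaredmap} and the Remark following Proposition \ref{prop:the_squares_are_Rauzy}), together with the invariance of $\mu$ under the central involution of the suspension recalled in paragraph \ref{subs:MRI}, forces $\psi$ to be odd under a measure-preserving involution. Hence $S_N$ grows sublinearly; combined with escape, this is exactly non-linear escape, since linear escape would require linear growth of $S_N$ in the direction of the guiding line.

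The main obstacle lies in step four of the necessity direction: showing that a non-minimal $F \in \CETthreehalf$ with acute $\Delta$ has only closed or linearly escaping orbits. Section \ref{sec:integrability_section} deliberately avoids the hyperplane $\tau = 1/2$ (see Proposition \ref{thm:CETthree}) because the partition lengths satisfy forced rational relations there, so Proposition \ref{prop:just_before} does not apply off-the-shelf to the $\mathrm{IETF}^4$-embedding of $\CETthreehalf$. The task is thus to verify, by an explicit analysis inside the connected component of the permutation \eqref{eq:basic_combinatorial_type_CET_3} (in the spirit of Lemma \ref{lemma:main_vector_preserved_lemma} and possibly with computer assistance), that every stop of the Rauzy induction on this hyperplane produces a simple Poincaré--Rauzy first-return map. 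A secondary subtlety is the rigorous identification $\int \psi \, d\mu = 0$: though geometrically evident from Figure \ref{surfaces}, it requires that unique ergodicity of $F$ descends to an ergodic involution-quotient carrying a measure under which $\psi$ is odd.
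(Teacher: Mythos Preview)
Your overall structure matches the paper's: Propositions~\ref{prop:obtuse} and~\ref{thm:generic} for (1) and (2), the Arnoux--Rauzy minimality criterion from \cite{AR91} for (3), and a Birkhoff-sum argument on the combinatorial displacement for the moreover clause.

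Where you diverge is in the proof that $\int \psi\,d\mu = 0$. You invoke the $\Z/2$-symmetry of $\CETthreehalf$ and argue that $\psi$ is odd under a measure-preserving involution, and you correctly flag this as a subtlety requiring care. The paper bypasses it entirely with an elementary telescoping computation using only that $F$ is a bijection: writing $\overrightarrow{ij} = \overrightarrow{0j} - \overrightarrow{0i}$ and using $\sum_i |I_{ij}| = |I_j|$, $\sum_j |I_{ij}| = |I_i|$, one gets
\[
\int_{\Sph^1} \psi \;=\; \sum_{i,j} \overrightarrow{ij}\,|I_{ij}|
\;=\; \sum_j \overrightarrow{0j}\sum_i|I_{ij}| \;-\; \sum_i \overrightarrow{0i}\sum_j|I_{ij}|
\;=\; 0.
\]
This works for any $\tau$, not just $\tau=\tfrac12$, and needs no involution or descent argument. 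Your symmetry route is defensible but unnecessarily delicate; the paper's is a two-line identity.

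As for the obstacle you flag in step four of the necessity direction (ruling out non-linear escape for non-minimal $F\in\CETthreehalf$ with acute $\Delta$): you are right that Proposition~\ref{thm:CETthree} explicitly excludes $\tau=\tfrac12$, and that on this hyperplane the $\IETFfour$-lengths satisfy the forced relation $\lambda_A+\lambda_C=\lambda_B+\lambda_D$, so Proposition~\ref{prop:just_before} does not apply verbatim. The paper, however, does not engage with this at all: it simply cites \cite{AR91} for point~(3), taking for granted that off the gasket the Arnoux--Rauzy renormalization stops and the map decomposes. You are being more careful than the paper here, but the computer-assisted analysis of the Rauzy component you propose is heavier machinery than the authors deemed necessary.
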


\begin{proof}
By Proposition \ref{prop:obtuse}, $\Delta$ is acute. The existence of a non-linearly escaping trajectory implies that $\tau=\frac{1}{2}$ for a corresponding $F \in \CETthree$ by Proposition \ref{thm:generic}. The point 3. follows from \cite{AR91} where it was proven that the only Arnoux-Rauzy minimal maps are those with parameters $x_j$ in the set $\mathcal{R}$.

Let us prove by contradiction that a trajectory is non-linearly escaping. Consider $T = F^2$ and let us consider its coding as in equation \eqref{eq:possible directions}. Let denote by $f$ the map with values in $\R^2$ that is this approximated displacement only depending on two consecutive letters of the symbolic coding. If the trajectory is -linearly escaping, there is a positive constant $C$ such that 
\begin{equation}\label{eq:linear}
\vert \vert \sum_{j=0}^{N} \overrightarrow {\Omega_j \Omega_{j+1}}\vert \vert > CN.
\end{equation}
By unique ergodicity of the map $T$, and by applying the ergodic theorem, we have
\begin{equation}\label{eq:ergodic_theorem}
\lim_{N\to \infty} \frac{1}{N}\vert \vert \sum_{j=0}^{N} \overrightarrow {\Omega_j \Omega_{j+1}}\vert \vert = \int_{\Sph^1}f(x) dx.
\end{equation}
Note that here, one can denote $I_A:=I_1, I_B:=I_2, I_C:=I_2$ and analogously to Definition \eqref{eq:symbolicdefinitiontwosteps}, one can denote the intervals $I_{i,j}$ with the labels $i,j \in \mathcal{A}$, for the alphabet $\mathcal{A}=\{A,B,C\}$. Now we calculate explicitely the right-hand side of \eqref{eq:ergodic_theorem}:
\begin{equation*}
\int_{\Sph^1} f(x) dx = \sum_{i,j \in \mathcal{A}} \overline{i j} |I_{ij}|=
 \sum_{i,j \in \mathcal{A}} \left(
\overrightarrow{0j}-\overrightarrow{0i}
 \right)
  |I_{ij}|= \sum_j \overrightarrow{0j} \sum_i |I_{ij}| - \sum_{i} \overrightarrow{0i} \sum_j 
|I_{ij}| = 0, 
\end{equation*}
since $\sum_i |I_{ij}|=I_j, \sum_j |I_{ij}|=|I_i|$ by definition \eqref{eq:symbolicdefinitiontwosteps}.  Thus this calculation and the relation \eqref{eq:linear} give a contradiction in the relation \eqref{eq:ergodic_theorem}.
\end{proof}

This Proposition proves, for example, that the trajectory with parameters \ref{eq:Dianas-parameters} is indeed not linearly escaping.

We didn't manage to prove the non-linear escaping behavior for all the points in the set  $\boldsymbol{\mathcal{R}}$ but only for those which correspond to uniquely ergodic maps. 

\section{Perspectives and open questions.}\label{sec:perspectives}
\subsection{Combinatorial orbits of non-linearly escaping trajectories.}\label{subs:Hooper-Weiss}

From the simulations, one can see that non-linearly escaping trajectories of triangle tiling billiards have a very interesting behavior. One can hope to understand it better even though Proposition \ref{thm:when_escape} proves a spiraling behavior for most of these trjactories (with respect to an appropriate measure).

\begin{problem}
Describe the combinatorial orbits of non-linearly escaping trajectories of the triangle tiling billiards. 
\end{problem}
This direction of research was already discussed in paragraph \ref{subs:HW_subsec} but we add it here for completeness. To some extent, a part of the work on this problem was done in \cite{AHS16}. 

\subsection{Squares of fully flipped interval exchange transformations and $\SAF$ invariant.}
For a fully flipped interval exchange transformation $F \in \FETn [0,1)$ its square $T=F^2$ doesn't have any of its intervals flipped, $T \in {\mathrm{IET}^{2n-1}} [0,1)$. Let 
\begin{equation*}
\mathcal{F}:=\left\{
T \in {\mathrm{IET}^{2n-1}} \left|\right. \exists F \in \FETn : F^2=T
\right\}.
\end{equation*}

Note that the set $\mathcal{F}$ is a set of half of the dimension of 
$ {\mathrm{IET}^{2n-1}} [0,1)$. 
%


\begin{problem}
Describe the set $\mathcal{F}$ in terms of the invariants of interval exchange transformations.
\end{problem}

We still do not understand very well what exactly is this subset $\mathcal{F}$ in the set of all interval exchange transformations $\mathcal{F} \subset {\mathrm{IET}^{2n-1}}$. In this paragraph, we prove that $\mathcal{F}$ lies in the kernel of the SAF invariant.

\begin{definition}
Take $T$ is an interval exchange transformation, $T \in \mathrm{IET}^n$ such that $\lambda_i$ are the lengths of its intervals $I_i$ of continuity, and for every interval $I_i$ the map $T$ is a translation by some number $t_i$:
\begin{equation*} 
\left.T\right|_{I_i} (x) = x+t_i, 1 \leq i \leq n.
\end{equation*}

Then the \emph{Sah-Arnoux-Fathi invariant }of $T$ is a $\Q$-bilinear form which is given by
\begin{equation}\label{def:SAF}
\SAF(T)=\sum_{i=1}^m l_i \otimes_{\Q} t_i
\end{equation}
\end{definition}

The SAF invariant of an IET was defined by Arnoux in his thesis \cite{A81}, where he proved the following

\begin{theorem}\cite{A81}
Consider $T \in \IETn [0,1)$. Then its SAF-invariant has the following properties.
\begin{itemize}
\item[1.] If $T$ is periodic then $\SAF(T)=0$.
\item[2.] For $T=T_{\alpha}$ being a rotation of angle $\alpha$,
\begin{equation*} 
 \SAF(T_{\alpha})=1 \otimes \alpha - \alpha \otimes 1
 \end{equation*}
 And, consequently, $T_{\alpha}$ is periodic ($\alpha \in \Q$) if only if $SAF(T_{\alpha})=0$.
\item[3.] Let $T_1$ and $T_2$ be two first return maps of a directional flow on some translation surface $M$ with transversals $\tau_1, \tau_2 \subset M$ correspondingly. Then $\SAF(T_1)=SAF(T_2)$. In other words, $\SAF$ is an invariant of the flow. 
\item[4.] $\SAF(T)$ is invariant by the Rauzy induction.
\end{itemize}
\end{theorem}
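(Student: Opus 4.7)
The plan is to prove the four parts in the order stated, since the later ones build on the earlier ones.

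For (1), I would use a Rokhlin-type tower refinement. Periodicity of $T$ allows us to refine the partition $\{I_i\}$ into subintervals $\{J'_j\}$ on each of which $T$ still acts as a single translation and on which the return-time function is constant. The refined intervals group into closed orbits $J'_{j_1} \to \ldots \to J'_{j_n} \to J'_{j_1}$, all of a common length $\lambda'$, whose translations $t'_{j_1},\ldots,t'_{j_n}$ satisfy $\sum_k t'_{j_k}=0$ (the orbit closes back). By $\Q$-bilinearity,
\[
\sum_{k=1}^{n} \lambda' \otimes t'_{j_k} \;=\; \lambda' \otimes \Big(\sum_k t'_{j_k}\Big) \;=\; 0,
\]
and summing over orbits yields $\SAF(T)=0$. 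A short side lemma is needed to check that the SAF invariant is unchanged by refining the partition, via $(\lambda_a+\lambda_b)\otimes t = \lambda_a\otimes t + \lambda_b\otimes t$.

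For (2), the rotation $T_\alpha$ on $[0,1)$ is encoded by the intervals $[0,1-\alpha)$ with translation $+\alpha$ and $[1-\alpha,1)$ with translation $\alpha-1$. Direct expansion gives
\[
\SAF(T_\alpha) = (1-\alpha)\otimes\alpha + \alpha\otimes(\alpha-1) = 1\otimes\alpha - \alpha\otimes 1,
\]
after the terms $-\alpha\otimes\alpha$ and $+\alpha\otimes\alpha$ cancel. For the periodicity criterion I use that in $\R\otimes_{\Q}\R$ the element $1\otimes\alpha - \alpha\otimes 1$ vanishes if and only if $\alpha\in\Q$: when $\alpha\in\Q$ the rational scalar can be pulled across the $\otimes_{\Q}$, and when $\alpha\notin\Q$, extending $\{1,\alpha\}$ to a $\Q$-basis of $\R$ makes non-vanishing explicit.

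The heart of the proof is (3), and the plan is to reduce transversal-independence to a pair of elementary moves on the Poincar\'e section. First, (a) \emph{cutting}: splitting one of the intervals $I_i$ into two and accordingly splitting its row in the data leaves the SAF sum unchanged by bilinearity. Second, (b) \emph{sliding a sub-rectangle along the flow}: one pushes a small rectangle above a sub-interval of the transversal forward to its next flow return, which rewires both the length and translation data of $T_1$ in matched pairs; a case analysis with bilinearity shows that each such slide preserves $\SAF$. Any two transversals to the same flow on $M$ differ by finitely many cuts and slides (the zippered-rectangles picture), so $\SAF(T_1)=\SAF(T_2)$. The main obstacle here is the bookkeeping for (b); the cleanest route is to work within the zippered-rectangles model of $M$, where both $T_1$ and $T_2$ are read off from the same underlying rectangle data and $\SAF$ becomes an intrinsic sum over rectangles, independent of the chosen transversal.

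Finally, (4) is immediate from (3): by Definition \ref{def:MRI}, $\mathcal{R}T$ is the first-return map of $T$ to the sub-interval $[0,\max(\alpha^{\ttop}_{n-1},\alpha^{\bbot}_{n-1}))$, which is a different transversal to the same suspension flow. Applying (3) gives $\SAF(\mathcal{R}T)=\SAF(T)$.
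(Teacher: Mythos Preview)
The paper does not actually prove this theorem: it is stated with a citation to Arnoux's thesis \cite{A81} and used as background for the subsequent proposition that squares of fully flipped IETs have vanishing SAF. So there is no ``paper's own proof'' to compare against.

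Your sketch is broadly correct and follows the standard lines. Parts (1), (2), and (4) are fine as written: the tower argument in (1) is the right one (and your side lemma on refinement is exactly what is needed), the computation in (2) is straightforward, and deducing (4) from (3) via the first-return interpretation of Rauzy induction is the natural route. The only place where your write-up is genuinely incomplete is (3): the ``sliding'' move (b) is asserted rather than checked, and the claim that any two transversals differ by finitely many cuts and slides deserves a precise statement and proof (isotopy of transversals through a sequence of elementary moves, or a direct comparison via the zippered-rectangles decomposition). None of this is wrong, but in a self-contained proof you would want to write out the effect of one slide on the data $(\lambda_i,t_i)$ and verify the cancellation explicitly.
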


A $\SAF$ invariant is a very powerful tool for working with interval exchange transformations and translation surfaces. For example, for a surface of genus $2$, if $SAF=0$ then the translation flow is not minimal, by the works of Arnoux, Boshernitzan and Caroll Calta, McMullen (see \cite{A88}, \cite{BC}, \cite{Ca},  \cite{M12}).

Although, in higher genus the nullity of SAF is not equivalent to the absence of minimality. Indeed, the Arnoux-Yoccoz example \cite{A88} already mentionned in paragraph \ref{subs:HW_subsec} is the first example of an IET that has zero $\SAF$-invariant and is minimal. Its suspension dynamics corresponds to a translation flow on the surface of genus $3$. 

\begin{proposition}
For any map $F \in \FETn$ its square $T=F^2$ has zero SAF invariant.
\end{proposition}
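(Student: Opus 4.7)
The plan is to exploit the very rigid local form of a fully flipped IET. Since $F \in \FETn$, on each continuity interval $I_i$ the map acts as an orientation-reversing isometry: there exist constants $c_i \in \R$ such that $F(x) = -x + c_i$ for every $x \in I_i$. The key observation is that composing two such reflections produces a translation whose shift is a difference of the $c_i$'s, and this antisymmetric structure is exactly what forces the SAF invariant to telescope.

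Concretely, I would partition each $I_i$ into sub-intervals $I_{ij} := \{x \in I_i : F(x) \in I_j\}$, of length $\ell_{ij}$. On $I_{ij}$ one checks directly that
\begin{equation*}
F^2(x) = -(-x + c_i) + c_j = x + (c_j - c_i),
\end{equation*}
so $T = F^2$ is a (possibly non-minimal refinement of an) interval exchange on the pieces $I_{ij}$ with translation constants $t_{ij} := c_j - c_i$. Note that merging adjacent pieces with identical translation does not affect the SAF invariant, since SAF is $\Q$-bilinear in lengths and shifts and sub-additive under refinement.

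Next, I would use the bijectivity of $F$ to obtain the two balance relations
\begin{equation*}
\sum_{j} \ell_{ij} = |I_i|, \qquad \sum_{i} \ell_{ij} = |I_j|,
\end{equation*}
the first because $\{I_{ij}\}_j$ partitions $I_i$, and the second because $\{F(I_{ij})\}_i$ partitions $I_j$ (up to a finite set) and $F$ restricted to $I_{ij}$ is an isometry. Plugging into the definition \eqref{def:SAF} then gives
\begin{equation*}
\SAF(T) \; = \; \sum_{i,j} \ell_{ij} \otimes_\Q (c_j - c_i) \; = \; \sum_j \Bigl(\sum_i \ell_{ij}\Bigr) \otimes c_j \; - \; \sum_i \Bigl(\sum_j \ell_{ij}\Bigr) \otimes c_i \; = \; \sum_j |I_j| \otimes c_j - \sum_i |I_i| \otimes c_i \; = \; 0.
\end{equation*}

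There is essentially no obstacle here beyond bookkeeping; the only mild point of care is verifying that the $I_{ij}$-decomposition is a legitimate representation of $T$ as an IET (which only requires that each $I_{ij}$ be an honest subinterval, an immediate consequence of $F$ being a single affine reflection on $I_i$), and checking that one may compute $\SAF$ from this possibly non-minimal representation, which is immediate from the $\Q$-bilinearity in \eqref{def:SAF}.
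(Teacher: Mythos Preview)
Your proof is correct and essentially identical to the paper's own argument: the same partition into $I_{ij} = \{x \in I_i : F(x) \in I_j\}$, the same computation $T|_{I_{ij}}(x) = x + (c_j - c_i)$, the same balance relations from bijectivity, and the same telescoping of the tensor sum. Your extra remark that one may compute $\SAF$ from a possibly non-minimal refinement is a worthwhile point the paper leaves implicit.
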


\begin{proof}
Divide the initial interval of definition of $F$ into the intervals $I_i$ of continuity of $F$. On each of these intervals, one writes 
 $F \left| \right. _{I_j}=-x+\tau_j, \tau_j \in \R$. We subdivide each of these intervals into the subsets $I_{j,k}$ (which are either segments or  empty sets) defined as in \eqref{eq:symbolicdefinitiontwosteps}. We have then $\cup_{k=1}^m I_{j,k}= I_j$ and $\cup_{j=1}^m I_{j,k}=F^{-1}(I_k)$. Also, we have $\sum_k |I_{j,k}|=\lambda_j$ and also $\sum_j |I_{j,k}|=\lambda_k$ since $F$ is a bijection outside the singularities.

The restriction of $T=F^2$ on each of these intervals is a shift: 
\begin{equation*}
T\left|\right._{I_{j,k}}(x)= - (-x+\tau_j) + \tau_k = x - \tau_j + \tau_k, \; \forall j,k.
\end{equation*}
One can finally calculate
\begin{multline*}
\SAF(T)=\sum_{j,k=1}^m |I_{j,k}| \otimes (\tau_k-\tau_j)=- \sum_j \left( \sum_k |I_{j,k}| \otimes \tau_j \right) + \sum_k \left( \sum_j |I_{j,k}| \otimes \tau_k \right) =\\
= - \sum_j \left( \sum_k |I_{j,k}|  \right) \otimes \tau_j + \sum_k \left( \sum_j |I_{j,k}| \right) \otimes \tau_k = -\sum_j \lambda_j \otimes \tau_j + \sum_k \lambda_k \otimes \tau_k = 0.
\end{multline*}
\end{proof}

A version of this Proposition in the context of flows on translation surfaces has been proven in \cite{S18}. There B. Strenner shows that any pseudo-Anosov map that is a lift of a pseudo-Anosov homeomorphism of a \emph{nonorientable} surface has a vanishing $\SAF$ invariant. He also gives an explicit construction of this nonorientable surface for the Arnoux-Yoccoz interval exchange. His construction can be directly repeated for any $F \in \CETthreehalf$ that defines a pseudo-Anosov map. 

We hope that the better understanding of the role of the nullity of SAF invariant for fully flipped interval exchange transformations can help to find the geometric proof of Lemma \ref{lemma:main_vector_preserved_lemma} as well as, maybe, solve the Conjecture \ref{conj:tree}.

\subsection{Minimality of interval exchange transformations with flips.}\label{subs:minimality_and_connections}
The combinatorial study of the modified Rauzy graphs in connection to minimality seems a promising area of research, see e.g. \cite{DM17} for the combinatorial approach of standard Rauzy graphs. The modified Rauzy graphs are very different from standard Rauzy graphs: for example, if there exists a path from one vertex to another in a modified graph, there is not necessarily a path backward (which was yet true for the standard Rauzy graphs).

\smallskip

We would like to formulate a general
\begin{conjecture}\label{conj:minimality}
For any $n$, if $F \in \CETn$ is minimal on $\mathbb{S}^1$ then $\tau=\frac{1}{2}$.
\end{conjecture}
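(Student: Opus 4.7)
The plan is to replicate for general $n$ the two-step structure of the proof of Theorem~\ref{thm:first_ingredient}. The argument splits into a geometric input (invariance of the hypersurface $\{\tau=1/2\}$ under modified Rauzy induction) and a combinatorial input (a path-independent covector $v^\perp$ defining this hypersurface in length coordinates). Once both are in hand, the contraction argument at the end of paragraph~\ref{subs:combinatorial_proof} applies essentially verbatim. A preliminary analogue of Lemma~\ref{lemma:simple_combinatorics_lemma} restricting the admissible Rauzy classes of minimal maps in $\CETn$ will also be needed, but should be obtained by routine case analysis.

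For the geometric step I would argue that the anti-holomorphic involution $\phi\colon(x,y)\mapsto(x+1/2,-y)$ on the suspension $X_F$ persists for any $F\in\CETn$ with $\tau=1/2$: $X_F$ is still a non-orientable compact surface, and modified Rauzy induction is still realised by cut-and-paste of horizontal-vertical rectangles, which commutes with $\phi$. The quotient $X_F/\phi$ generalises the projective-plane picture of Figure~\ref{surfaces}, and its invariance under $\Rauzy$ forces the hyperplane $\{\tau=1/2\}$ to be invariant in the parameter space of $\CETn$.

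For the combinatorial step, one represents $F$ as a fully flipped IETF on $[0,1)$ with $n+1$ intervals by cutting at the preimage of $0$, and then verifies by a direct computation parallel to that of paragraph~\ref{subs:combinatorial_proof} that the equation $\tau=1/2$ is equivalent to a single linear condition $v^\perp\cdot\lambda=0$. Based on the $n=3$ case, I expect $v^\perp\in\{-1,+1\}^{n+1}$ to be the vector of alternating signs in the cyclic order on $\Sph^1$, specialising to $(1,-1,1,-1)^T$ for $n=3$. The crux is then to prove the analogue of Lemma~\ref{lemma:main_vector_preserved_lemma}: for every finite Rauzy path $\gamma$ in the connected component of $\sigma_F$, the vector $A_\gamma^T v^\perp$ depends only on the terminal vertex of $\gamma$.

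This combinatorial invariance is the main obstacle. The $n=3$ case was verified by an exhaustive computer check on a quotient Rauzy graph with $19$ vertices, and the combinatorial explosion in higher dimensions makes such a brute-force attack infeasible. A promising conceptual route is to interpret $v^\perp$ as a homological or cohomological class on $X_F$ naturally attached to the fixed locus of $\phi$ (a disjoint union of horizontal saddle connections), so that its invariance under Rauzy induction becomes functorial. Once that is achieved, the contraction argument goes through unchanged: finiteness of the Rauzy graph yields a vertex $\tilde\sigma$ visited infinitely often by the Rauzy orbit of any minimal $F$, the scalar product $\langle A_{\tilde\sigma}v^\perp,\lambda^{(k_1)}\rangle=\langle v^\perp,\lambda^{(k_m)}\rangle$ is path-independent, and since $\|\lambda^{(k_m)}\|_\infty\to 0$ by Lemma~\ref{lemma:minimality}, this scalar product must vanish, forcing $v^\perp\cdot\lambda=0$, i.e., $\tau=1/2$.
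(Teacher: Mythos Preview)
This statement is a \emph{conjecture} in the paper, not a theorem: the paper gives no proof for general $n$. It only establishes the cases $n=3$ (Theorem~\ref{thm:first_ingredient}) and $n=4$ (Theorem~\ref{ref:thm_4}), in both cases by exactly the strategy you outline --- reduce to a single irreducible combinatorial type, write $\tau=1/2$ as $v^\perp\cdot\lambda=0$, verify the path-independence analogue of Lemma~\ref{lemma:main_vector_preserved_lemma} by computer on the relevant connected component of the quotient Rauzy graph, and then run the contraction argument. So your plan is not a different route: it is the paper's method, and you have correctly identified that the analogue of Lemma~\ref{lemma:main_vector_preserved_lemma} is the missing piece. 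The paper says the same thing explicitly (see the discussion after Lemma~\ref{lemma:main_vector_preserved_lemma} and in paragraph~\ref{subs:minimality_and_connections}): a conceptual, non-computer-assisted proof of that lemma is an open problem.

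One concrete correction: your guess that $v^\perp$ is the vector of alternating signs is already false for $n=4$. The Remark following Theorem~\ref{ref:thm_4} records that the relevant covector is $v^\perp=(1,-1,1,-1,-1)^T$, not $(1,-1,1,-1,1)^T$. So whatever homological interpretation one seeks, it cannot simply be ``alternating signs in the cyclic order on $\Sph^1$''; the dependence on the combinatorics of the cut is more subtle. This does not invalidate your overall plan, but it does mean the cohomological class you are looking for is not the obvious one, and finding the right formulation is precisely the open problem the authors flag.
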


\begin{theorem}\label{ref:thm_4}
Conjecture \ref{conj:minimality} is true for $n=4$.
\end{theorem}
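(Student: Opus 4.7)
The plan is to follow the proof of Theorem \ref{thm:first_ingredient} step by step for $n=4$. The geometric half carries over without extra work: the anti-holomorphic involution $\phi$ constructed in paragraph 4.1 is defined for any $\CETn$, so Rauzy induction, which is realized on the suspension surface $X_F$ as a cut-and-paste of rectangles with horizontal and vertical sides, automatically preserves the hypersurface $\tau = 1/2$. What remains is the combinatorial half, which consists of reducing the problem to a single connected component of the modified Rauzy graph of $\mathrm{IETF}^5$ and then establishing the $n=4$ analog of Lemma \ref{lemma:main_vector_preserved_lemma}.

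For the reduction, I would use the combinatorial classification from the proof of Proposition \ref{thm:CETfour}: any map $F \in \CETfour$, viewed as an element of $\mathrm{FET}^5[0,1)$ after marking the origin, realizes one of four permutation types. Types $1$ and $4$ fix a letter in position $1$ or $5$ on both rows and therefore possess an invariant interval on which $F$ acts as a flip, producing an open set of $2$-periodic points; such an $F$ is not minimal. Types $2$ and $3$ are interchanged by reversing the orientation of the circle, so up to this symmetry a minimal $F$ has combinatorics \eqref{eq:combinatorics_CETfour}, and its Rauzy orbit stays in the connected component of that vertex, a component which has already been enumerated in the proof of Proposition \ref{thm:CETfour}. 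Translating the condition $\tau = 1/2$ to the lengths $\lambda_A, \ldots, \lambda_E$ at this starting vertex gives a homogeneous linear equation $\langle v^\perp, \lambda\rangle = 0$ cutting out a hyperplane $H \subset \mathbb{R}^5$.

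The core step is then to verify, by computer, that for every vertex $\sigma'$ in this connected component the vector $A_\gamma^T v^\perp$ depends only on $\sigma'$ and not on the path $\gamma$ from \eqref{eq:combinatorics_CETfour} to $\sigma'$; equivalently, that the transpose of every cycle matrix acts as the identity on the line spanned by the propagated normal vector. This is the direct analog of Lemma \ref{lemma:main_vector_preserved_lemma} and can be checked with the same Sage programme of P. Mercat already used for the $n=3$ case. Once this invariance is in hand, the argument concludes exactly as in Theorem \ref{thm:first_ingredient}: for a minimal $F$, Lemma \ref{lemma:minimality} gives $\|\lambda^{(m)}\|_\infty \to 0$ with the Rauzy orbit visiting some vertex $\tilde\sigma$ infinitely often, so for $w := A_{\tilde\sigma} v^\perp$ one has $\langle v^\perp, \lambda\rangle = \langle w, \lambda^{(k_m)}\rangle \to 0$, forcing $\tau = 1/2$. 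The main obstacle I foresee is the size of the Rauzy component of \eqref{eq:combinatorics_CETfour}, which is much larger than the graph of Figure \ref{pic:graph_for_four}, so that a careful enumeration of its cycle generators is required. Even more conceptually, as the authors themselves emphasize for $n = 3$, there is as yet no theoretical reason why the relevant eigenvalue along each cycle must equal $1$ rather than some other scalar; in principle the naive analog could fail and require a modified formulation, and any conceptual understanding of this invariance would likely unlock Conjecture \ref{conj:minimality} in all ranks at once.
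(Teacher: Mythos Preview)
Your proposal is correct and follows essentially the same approach as the paper: reduce to the connected component of the permutation \eqref{eq:combinatorics_CETfour} in the Rauzy graph of $\IETFfive$, establish the analogue of Lemma~\ref{lemma:main_vector_preserved_lemma} by computer, and then repeat the scalar-product argument of Theorem~\ref{thm:first_ingredient}. The paper's accompanying remark adds only two small details you leave implicit: the explicit normal vector is $v^{\perp}=(1,-1,1,-1,-1)^T$, and the relevant quotient component has about $130$ vertices (versus $8222$ in the full graph), confirming that your anticipated size obstacle is real but manageable.
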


\begin{proof}
Proof is analogical to the case when $n=3$, and also uses an analogue of Lemma \ref{lemma:main_vector_preserved_lemma} whose proof is computer assisted. 
\end{proof} 

\begin{remark}
The calculations for the proof of Theorem \ref{ref:thm_4} are heavier than that of the the proof of Theorem~\ref{thm:first_ingredient} since they involve the study of the connected component of the permutation

\begin{equation}\label{eq:minimal_permutation_4}
\begin{pmatrix}
\overline{A}&\overline{B}&\overline{C}&\overline{D}&\overline{E} \\
\overline{B}&\overline{D}&\overline{E}&\overline{A}&\overline{C}
\end{pmatrix}
\end{equation}

in a bigger Rauzy graph (that of $\IETFfive$). The permutation \eqref{eq:minimal_permutation_4} is the only combinatorial type of a minimal permutation in $\FETfive$ coming from the family $\CETfour$ (up to the change of the orientation of the initial circle) by adapting the argument in the Remark of paragraph \ref{subs:combinatorial_proof}. An invariant analogue to that of Lemma \ref{lemma:main_vector_preserved_lemma} exists for the vector $v^{\perp}:=(1,-1,1, -1,-1)^T$, and hence the proof of Theorem \ref{ref:thm_4} is repeated along the lines of the proof of Theorem \ref{thm:first_ingredient}.
\end{remark}

Conjecture \ref{conj:minimality} is related to a following problem.

\begin{problem}
Describe all minimal maps in the family $\IETFn$.
\end{problem}

\smallskip

We manage to give the answer to this problem for $n=4$ as a corollary of the study of the invariants of the modified Rauzy graph for $\IETFfour$.

\begin{theorem}
A map $F \in \IETFfour$ is minimal if and only if the following two conditions hold:
\begin{itemize}
\item[1.] there exists a Rauzy path in the Rauzy graph of $\IETFfour$ that connects a map $F_0$ with the combinatorics given by \eqref{eq:basic_combinatorial_type_CET_3} to $F$. In other words, $\exists N$ such that $\mathcal{R}^N F_0 = F$;
\item[2.] for a corresponding map $F_0$ permuting the intervals $I_i$ labeled by $i \in \mathcal{A}=\{A,B,C,D\}$, with the combinatorics \eqref{eq:basic_combinatorial_type_CET_3}, the following Rauzy condition on the lengths $\lambda_i, i \in \mathcal{A}$ holds :
\begin{align*}
\lambda_A+\lambda_B+\lambda_C+\lambda_D=\lambda,\\
\frac{1}{\lambda}\left(
\lambda_B+\lambda_C+\lambda_D-\lambda_A, \lambda_A-\lambda_B-\lambda_C+\lambda_D, \lambda_A+\lambda_B+\lambda_C-\lambda_D
\right)\in \boldsymbol{\mathcal{R}}.
\end{align*}
\end{itemize}
\end{theorem}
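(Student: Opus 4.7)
The plan is to reduce this characterization to the already-established minimality criterion for the Arnoux-Rauzy family via the $\CETthreehalf$-identification of Proposition \ref{prop:the_squares_are_Rauzy}. The key observation, and the content that makes condition 2 drop out naturally, is purely linear algebraic: for $F_0$ with combinatorics \eqref{eq:basic_combinatorial_type_CET_3} viewed as an element of $\CETthree$, the dictionary $(\lambda_A,\lambda_B,\lambda_C,\lambda_D) = (l_1,\tau-l_3,l_2+l_3-\tau,l_3)$ from paragraph \ref{subs:combinatorial_proof} combined with the forced condition $\tau=1/2$ (Theorem \ref{thm:first_ingredient}) and the relation \eqref{eq:relation_x_and_l} gives
\begin{align*}
\tfrac{1}{\lambda}(\lambda_B+\lambda_C+\lambda_D-\lambda_A) &= l_2+l_3-l_1 = 1-2l_1 = x_1, \\
\tfrac{1}{\lambda}(\lambda_A-\lambda_B-\lambda_C+\lambda_D) &= l_1-l_2+l_3 = 1-2l_2 = x_2, \\
\tfrac{1}{\lambda}(\lambda_A+\lambda_B+\lambda_C-\lambda_D) &= l_1+l_2-l_3 = 1-2l_3 = x_3,
\end{align*}
so that condition 2 is exactly the statement $(x_1,x_2,x_3)\in \boldsymbol{\mathcal{R}}$.

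For sufficiency, I would argue as follows. Given conditions 1 and 2, the computation above identifies $F_0^2$ with the Arnoux-Rauzy map of parameters $(x_1,x_2,x_3)\in\boldsymbol{\mathcal{R}}$ via Proposition \ref{prop:the_squares_are_Rauzy}. By the classical result of Arnoux-Rauzy \cite{AR91}, $F_0^2$ is then minimal; since $F_0$ is a fully-flipped circle IET whose square is minimal, $F_0$ itself is minimal (no $F_0$-orbit can be nowhere dense without producing a non-dense $F_0^2$-orbit). Finally, $F=\mathcal{R}^N F_0$ is by definition the first return map of $F_0$ to a subinterval, and hence inherits minimality from $F_0$.

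For necessity, I would run the argument in reverse, but the first step is combinatorial and this is where the weight of the proof lies. Assume $F$ is minimal. By Lemma \ref{lemma:minimality}, the Rauzy induction never stops on $F$, so the orbit of $[\sigma_F]$ in the finite Rauzy graph of $\IETFfour$ eventually enters a strongly connected component $\mathcal{C}$. The main claim is that the only strongly connected component of the Rauzy graph of $\IETFfour$ containing irreducible vertices supporting minimal maps is the connected component of \eqref{eq:basic_combinatorial_type_CET_3} depicted in Figure \ref{pic:graph_for_four}. Granting this, a forward path from \eqref{eq:basic_combinatorial_type_CET_3} to $[\sigma_F]$ exists and, since the length dynamics on this path is linear and invertible, it uniquely determines an $F_0$ with the required combinatorics and length vector such that $\mathcal{R}^N F_0 = F$: this gives condition 1. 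Minimality of $F_0$ follows from that of $F$ by the same first-return argument; then $F_0$ sits in $\CETthree$, Theorem \ref{thm:first_ingredient} forces $\tau=1/2$, Proposition \ref{prop:the_squares_are_Rauzy} places $F_0^2$ in the Arnoux-Rauzy family, and \cite{AR91} gives $(x_1,x_2,x_3)\in\boldsymbol{\mathcal{R}}$ which by the computation above is condition 2.

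The main obstacle is the combinatorial classification step: proving that no strongly connected component of the Rauzy graph of $\IETFfour$ other than that of \eqref{eq:basic_combinatorial_type_CET_3} contains vertices supporting minimal maps. I expect to handle this in the same computer-assisted spirit as Lemma \ref{lemma:main_vector_preserved_lemma}, by enumerating the irreducible generalized permutations in $S_4^{\A}$ with flip vector different from $\one$, computing their Rauzy-induction successors with Mercat's program, and for each strongly connected component other than the distinguished one exhibiting either a letter that never wins along any cycle (as in the proof of Proposition \ref{prop:ietfthree_integrability} and Figure \ref{fig:little_component}) or a nontrivial linear invariant forcing integrability. The analogous claim for $\CETfour$ is already asserted as Theorem \ref{ref:thm_4}, and the work here is essentially to propagate that combinatorial reduction to the full space $\IETFfour$ while keeping track of reducibility.
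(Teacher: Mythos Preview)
Your proposal is correct and follows essentially the same approach as the paper: a computer-assisted enumeration showing that the only component of the Rauzy graph of $\IETFfour$ supporting minimal maps is that of \eqref{eq:basic_combinatorial_type_CET_3}, followed by the identification with the Arnoux--Rauzy family via Theorem~\ref{thm:first_ingredient}, Proposition~\ref{prop:the_squares_are_Rauzy} and \cite{AR91}, together with the linear-algebra dictionary you spell out (which the paper summarizes as $[l_1:l_2:l_3]=[\lambda_A:\lambda_B+\lambda_C:\lambda_D]$ combined with \eqref{eq:relation_x_and_l}). One small remark: your reference to Theorem~\ref{ref:thm_4} at the end is slightly off target, since that result concerns $\CETfour$ and the Rauzy graph of $\IETFfive$, not the $\IETFfour$ classification needed here; the relevant precedent in the paper is rather the $\IETFthree$ argument of Proposition~\ref{prop:ietfthree_integrability}.
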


\begin{proof}
 First, by explicit computer-assisted search we conclude that the only minimal component (a component corresponding to minimal transformations) in the quotient Rauzy graph is that of the permutation \eqref{eq:basic_combinatorial_type_CET_3}. This and the results obtained above (namely, Propositions \ref{thm:generic} and \ref{thm:when_escape}) conclude the proof. The condition on the lengths $\lambda_i, i \in \mathcal{A}$ follows from the following relations. First, $(x_1,x_2,x_3) \in \boldsymbol{\mathcal{R}}$ by Proposition \ref{thm:when_escape}, where $l_j$ and $x_j$ are related by \eqref{eq:relation_x_and_l}. Second, the relation between $l_j, j=1,2,3$ and $\lambda_i, i \in \mathcal{A}$ is given by $[l_1:l_2:l_3]=[\lambda_A: \lambda_B+\lambda_C: \lambda_D]$, see for example Figure \ref{fig:combinatoric_type}.
\end{proof}

\subsection{Quadrilateral tilings.}\label{sec:polygonal_tilings}
The Definition \ref{def:reflection_in_a_circumcircle:2} of a system of reflections in a circumcircle can be generalized in an obvious way to any inscribed $n$-polygon. Although, not any polygon tiles the plane.
A tiling by congruent polygones in the plane can be achieved only for $n=3,4,5$ and $6$. All of these tilings are classified, with a final achievement by M. Rao in proving the classification of all pentagonal tiling families into $15$ (already known) classes \cite{R17}. In this paragraph, we concentrate on the case of quadrilateral tilings but we mention this interesting

\begin{problem}
Study the behavior of tiling billiards on all the tilings of a plane by congruent pentagons (hexagons).
\end{problem}

\begin{problem}
Any quadrilateral tiles a plane in a periodic way. Study the behavior of tiling billiards in such a quadrilateral tiling. For example, the parallelogramm tiling.
\end{problem}

We think that for the parallelogramm tiling, a trajectory of a billiard will behave in a way closer to that described in \cite{DH18}, and fill densely the open subsets of the plane. It is also interesting to see how the tiling billiard passes from one behavior to the other once the geometry of the tiling changes (from trihexagonal tiling to triangle tiling, or from tiling into parallelograms to square tiling).

Here, we restrict ourselves to the case of the quadrilaterals inscribed in circles (cyclic quadrilaterals), in order to the analogue of Proposition \ref{prop:Dianacomeback} to hold. The class of tilings we study is \emph{periodic cyclic quadrilateral tilings } on the plane, defined as follows:  for any cyclic quadrilateral in a tiling a neighbouring tile is obtained by a central symmetry in the middle of the edge, all the tiles being congruent. As before, one can associate to any such tiling the map $F_{\tau, l_1, l_2, l_3, l_4} \in \CETfour$. The lengths $l_j$ correspond to the lengths of the sides of the inscribed quadrilateral, and $\tau$ to the direction of the trajectory in a folded system, as before.

Following the strategy of the proof of Proposition \ref{thm:generic}, by applying Theorem \ref{ref:thm_4} and Proposition \ref{thm:CETfour}, we show that qualitatively periodic cyclic quadrilateral tilings have the same behavior as triangle tiling billiards, see Figure \ref{fig:quadrilaterals_simulations}. 

\begin{theorem}\label{thm:minimality_4}
Consider the set of all cyclic quadrilaterals with the set of lengths $l_j$ such that $l_j, j=1,..,4$ are independent over $\Q$. Consider any trajectory $\delta$ in a corresponding tiling that doesn't pass through circumcenters of the tiles. Then the following holds:
\begin{itemize}
\item[1.] The trajectory $\delta$ is not drift-periodic;
\item[2.] if $\delta$ is closed, its symbolic orbit is equal to $\overline{s^2}$ for some word $s$ in the alphabet of sides $\mathcal{A}:=\{a,b,c,d\}$ of odd length. Consequently, the period of $\delta$ is equal to $4n+2$ for some $n \in \N^*$;
\item[3.] if $\delta$ is not closed, it is linearly escaping. Its symbolic coding can be described by an infinite word $\omega$ that can be represented as an infinite concatenation of the words $w_1$ and $w_2$ forming a sturmian sequence.
\end{itemize}
\end{theorem}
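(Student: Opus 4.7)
The plan is to mirror the argument of Proposition \ref{thm:generic} line-by-line, substituting the $n=4$ minimality criterion (Theorem \ref{ref:thm_4}) and the corresponding integrability statement (Proposition \ref{thm:CETfour}) for their $n=3$ counterparts. First I would establish the cyclic-quadrilateral version of the folding construction of \cite{BDFI18}: when two adjacent tiles of the tiling are folded together along their shared edge so that the trajectory straightens, both tiles remain inscribed in a common circumcircle. This reduces the dynamics of any trajectory $\delta$ to a map $F = F_{\tau, l_1, l_2, l_3, l_4} \in \CETfour$ on (a subset of) the shared circumcircle, with $\tau$ encoding the normalized distance from $\delta$ to the circumcenter and $l_j$ the normalized arc lengths cut off by the four sides. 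In particular, $\delta$ passes through circumcenters if and only if $\tau = \tfrac{1}{2}$.

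Assertion $1$ follows by transposing the argument of Proposition $2.15$ of \cite{BDFI18}: the existence of a drift-periodic orbit forces a nontrivial $\Q$-linear relation on the $l_j$, which contradicts their assumed $\Q$-independence. Since $\delta$ avoids circumcenters, we have $\tau \neq \tfrac{1}{2}$, so by Theorem \ref{ref:thm_4} the map $F$ is not minimal, hence by Lemma \ref{lemma:minimality} the modified Rauzy induction stops on $F$. Viewing $\CETfour$ inside $\FETfive$ via the inclusion and applying Proposition \ref{thm:CETfour}, the map $F$ is integrable, its first-return map $R$ on the Poincar\'e-Rauzy section is simple, and every invariant periodic interval on that section is flipped.

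For Assertion $2$, a closed trajectory corresponds to a point in a periodic cylinder $I$ of $R$; on $I$, the restriction $R|_I$ equals $F^d$ for some $d$ and is a flip. Because $F \in \CETfour$ is fully flipped, $F^d$ reverses orientation on each of its continuity intervals iff $d$ is odd; writing $d = 2n+1$, we obtain $F^{2d} = \id$ on $I$, so the billiard period is $2d = 4n+2$. Comparing the coding of the first $d$ and the next $d$ steps of the orbit, which are related by the flip $F^d$, yields the form $\overline{s^2}$ with $|s| = 2n+1$ odd. For Assertion $3$, escaping orbits correspond to rotational cylinders (with or without a marked singularity), conjugate to irrational rotations, with the marked-singularity case reduced to a pure rotation on a finer section exactly as in the triangle argument. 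The symbolic coding of a rotation relative to a $2$-interval partition is sturmian, yielding the advertised concatenation of two finite words $w_1, w_2$ in sturmian order.

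The principal obstacle is the first step: carrying out the folding reduction cleanly for cyclic quadrilateral tilings. Adjacent tiles in this tiling are related by a central symmetry at an edge midpoint rather than by a reflection, so one must verify that the negative-refraction rule nevertheless preserves the common circumcircle after folding and that the induced circle map indeed belongs to $\CETfour$. A secondary technical point is to reconcile the $\Q$-independence hypotheses: Proposition \ref{thm:CETfour} is stated under $\Q$-independence of the interval lengths $\lambda_i$, whereas our hypothesis is $\Q$-independence of the side-length data $l_j$. This is handled by a small perturbation in $\tau$, which (by the stability of symbolic codings established in Proposition \ref{prop:Dianacomeback}) does not alter the coding of a periodic orbit, and so allows us to reduce to the generic situation as in the proof of Proposition \ref{thm:period}.
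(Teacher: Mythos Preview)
Your proposal is correct and follows exactly the approach the paper indicates: the paper itself gives no detailed proof of this theorem, only the sentence ``Following the strategy of the proof of Proposition~\ref{thm:generic}, by applying Theorem~\ref{ref:thm_4} and Proposition~\ref{thm:CETfour}\ldots'', and your outline is precisely a fleshed-out version of that strategy. You even flag two genuine technical points (the folding reduction when adjacent tiles are related by central symmetry, and the mismatch between $\Q$-independence of the $l_j$ versus the five $\lambda_i$ required in Proposition~\ref{thm:CETfour}) that the paper passes over in silence; your proposed fixes for both are reasonable and in the spirit of the perturbation trick used in Proposition~\ref{thm:period}.
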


As we have seen in Section \ref{sec:integrability_section}, starting from $n=5$, the family $\CETn$ exhibits non-integrable behavior in open sets. Maybe this is related to the fact, that most of the inscribed pentagons never tile the plane? ...

For the quadrilaterals, one can ask the same questions about exceptional behavior of the trajectories (related to the problems discussed in the previous paragraph), as well as generalize Conjecture \ref{conj:tree}.

\begin{problem}
Study the set of exceptional (possibly non-linearly escaping) trajectories in the periodic cyclic quadrilateral tilings. This set is a subset of the trajectories passing through the circumcenters of the tiles. In other words, are the parameters $l_j, j=1, \ldots, 4$, for which the maps in $F_{\frac{1}{2}, l_1, l_2, l_3, l_4} \in \CETfour$ are minimal ?
\end{problem}

The answer to this question can probably be given in the terms of Theorem \ref{thm:minimality_4} and introduce a higher dimensional analogue of the Rauzy gasket.


\begin{conjecture}[Tree conjecture for quadrilateral tilings]\label{conj:tree_quadrilateral}
Let $\Lambda$ be as a union of all vertices and edges of all drawn triangles in a periodic cyclic quadrilateral tiling.
Take any periodic closed trajectory $\delta$ of a corresponding billiard. It incloses some bounded domain $U \subset \mathbb{R}^2$ in the plane, $\partial U = \delta$ and $U \cap \Lambda$ is an embedding of some graph in the plane. Then this graph is a \emph{tree}. 
\end{conjecture}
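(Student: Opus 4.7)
The plan is to adapt the strategy that yields Conjecture \ref{conj:tree} in the obtuse triangle case to the cyclic quadrilateral setting, augmented by Theorem \ref{thm:minimality_4} and the folding/unfolding technique used throughout the paper. First I would observe that, by Theorem \ref{thm:minimality_4}, any closed trajectory $\delta$ has period $n=4k+2$ and symbolic coding $\overline{s^2}$ with $|s|=2k+1$, and its combinatorial orbit is centrally symmetric. This already forces the enclosed region $U$ to have a global geometric structure: after applying the folding observation (the analogue of Proposition \ref{prop:Dianacomeback} for cyclic quadrilaterals, using that each cyclic quadrilateral is inscribed in a common circumcircle once the tiles are folded along $\delta$), the chain of $n$ tiles crossed by $\delta$ collapses to a periodic orbit of length $n$ on a single circle of radius $R$, the circumradius of the tile.

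Next I would set up the contrapositive. Suppose that $G := U \cap \Lambda$ is not a tree; then $G$ contains a simple cycle $\gamma$, which bounds a nonempty subregion $V \subset U$ that is tiled by complete copies of the cyclic quadrilateral. Denote by $T_1,\ldots,T_m$ the tiles inside $V$ incident to $\gamma$, and by $o_1,\ldots,o_m$ their circumcenters. The first integral property (the sign-preserving statement of point 2 of Proposition \ref{prop:Dianacomeback}) applies only along $\delta$, but the guiding idea is that going once around $\gamma$ must force a consistent winding of these circumcenters, which, combined with an Euler count on the finite planar subgraph $G$, would yield a numerical identity that fails. Concretely, denoting $V(G),E(G),F(G)$ the vertex, edge and bounded-face counts of $G$, the tree condition is $V(G)-E(G)=1$; one would try to establish $F(G)=0$ by a local argument at each candidate cycle.

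The third, and crucial, step would be to rule out such cycles locally. Here I would try two complementary approaches: \textbf{(a)} unfold all tiles inside $V$ onto a common circumcircle using the same edge-reflection rule as for tiles along $\delta$, and show that the resulting unfolded chain cannot be closed (a direct generalization of the argument for obtuse triangles in \cite{BDFI18}); and \textbf{(b)} exploit the central symmetry of the combinatorial orbit to obtain an involution $\iota$ of $U$ fixing one point, so that $\iota(\gamma)$ is another cycle inside $U$, leading to topological constraints on the nesting of cycles which, combined with the odd length $|s|=2k+1$, should give a parity contradiction.

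The main obstacle is essentially the same one that blocks the triangle version of the conjecture in the acute case: in neither setting can one easily exclude the existence of a small interior cycle $\gamma$ lying deep inside $U$, because the first integral provides no local information about tiles not crossed by $\delta$. A genuinely new ingredient will likely be needed, and I would look for it in the combinatorial restrictions imposed by Theorem \ref{thm:minimality_4}, specifically the sturmian-like structure of the open-family case, to show that any interior cycle in $G$ would force the existence of a second closed trajectory concentric to $\delta$, contradicting the fact (provable from the unfolding picture and the openness of the family of closed trajectories with fixed symbolic code) that such concentric trajectories cannot both fit inside $U$ with the same invariant $d(\delta)$.
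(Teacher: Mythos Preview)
This statement is labeled \emph{Conjecture} in the paper and is not proved there; it is presented in Section~\ref{sec:perspectives} as an open problem, parallel to Conjecture~\ref{conj:tree} for triangles, which the paper also leaves open (except for the obtuse case handled in \cite{BDFI18}). So there is no proof in the paper to compare your proposal against.

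Your proposal is not a proof either, and you essentially say so yourself: the final paragraph concedes that ``a genuinely new ingredient will likely be needed'' and that the main obstacle ``is essentially the same one that blocks the triangle version of the conjecture in the acute case.'' Concretely, neither of your two approaches closes the gap. Approach (a) appeals to the obtuse-triangle argument of \cite{BDFI18}, but that argument works precisely because in the obtuse case the enclosed graph is a \emph{chain}; nothing in your sketch explains why the unfolded chain of tiles interior to a hypothetical cycle $\gamma$ ``cannot be closed'' in the general cyclic-quadrilateral setting. Approach (b) invokes the central symmetry of the combinatorial orbit and an odd-length parity, but ``should give a parity contradiction'' is not an argument: you never specify which invariant is odd on one side and even on the other. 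The closing idea---that an interior cycle would force a second concentric closed trajectory with the same $d(\delta)$---is also unsupported: the first integral $d(\delta)$ constrains only tiles crossed by $\delta$, so it says nothing about tiles interior to $U$, and there is no mechanism in the paper by which a cycle in $\Lambda\cap U$ produces another billiard trajectory. Finally, your opening invocation of Theorem~\ref{thm:minimality_4} carries hypotheses (rational independence of the $l_j$, $\delta$ avoiding circumcenters) that you would need to remove by a perturbation argument as in Proposition~\ref{thm:period}; this is repairable, but the substantive obstacle above is not addressed.
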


 \begin{figure}
\centering

\includegraphics[scale=0.4]{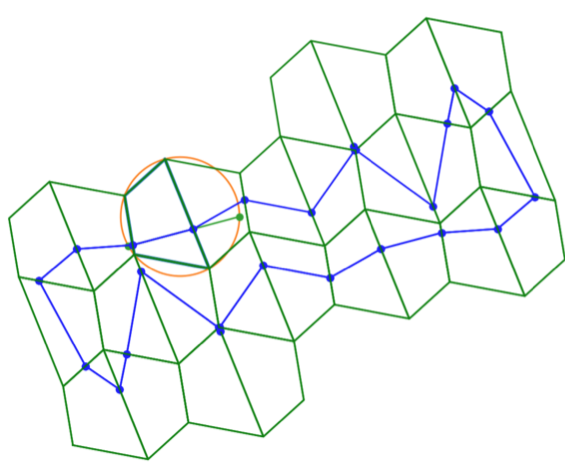}
\includegraphics[scale=0.4]{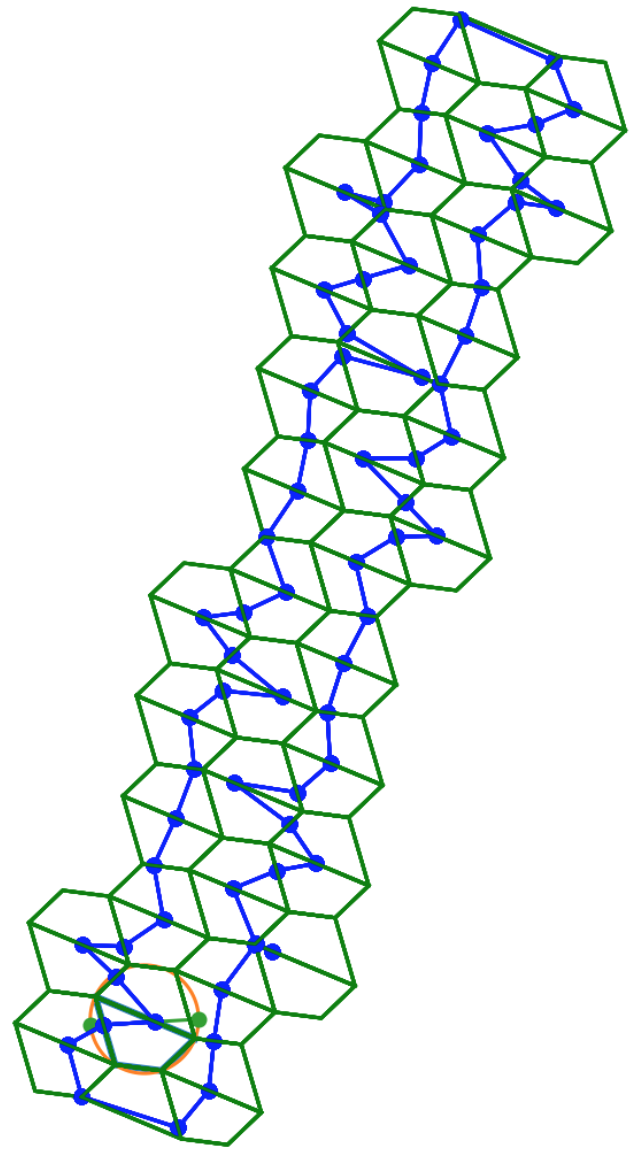}
\includegraphics[scale=0.4]{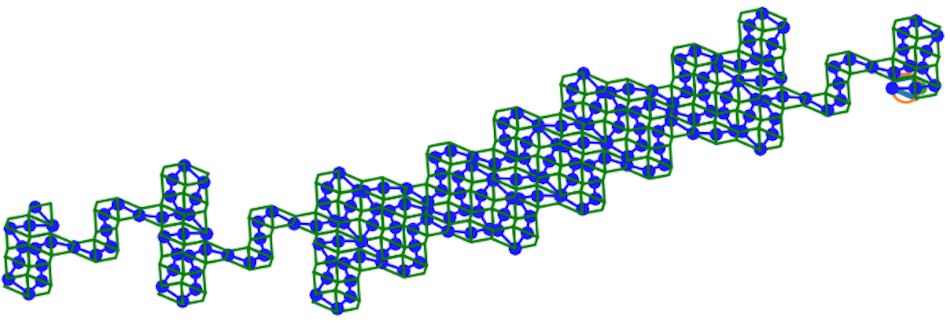}
\includegraphics[scale=0.3]{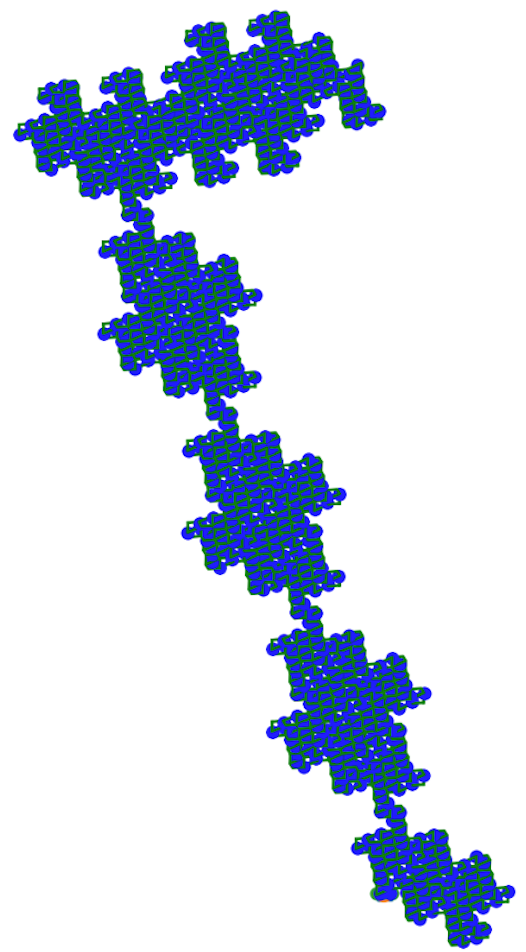}

\caption[]{Periodic cyclic quadrilateral tiling billiards. The two trajectories on the top are closed and the two on the bottom are linearly escaping. A program simulating quadrilateral tiling billiards dynamics has been written by the second author in collaboration with Ilya Schurov.}\label{fig:quadrilaterals_simulations}
\end{figure}

\begin{landscape}
\begin{figure}
\centering
\includegraphics*[scale=0.8 ]{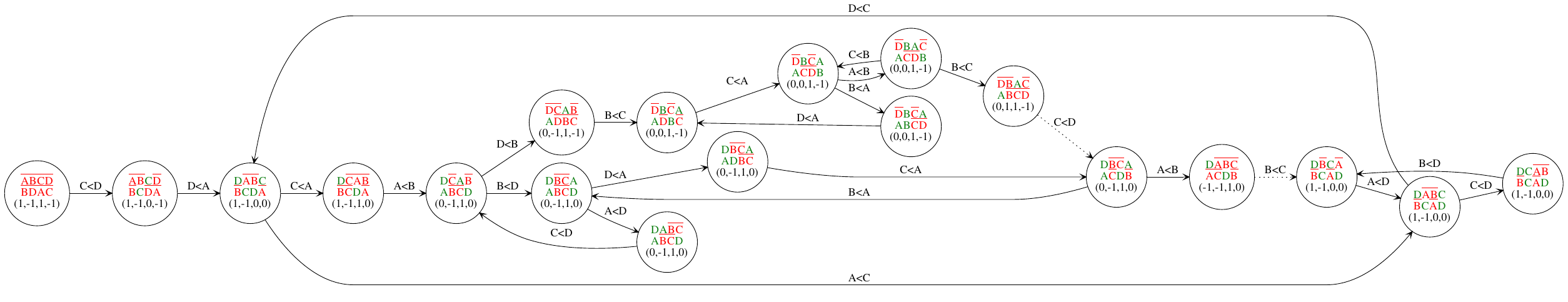}
\caption[]{
This is a connected component of the permutation \eqref{eq:basic_combinatorial_type_CET_3} in the quotient Rauzy graph of $\IETFfour$, i.e it is exactly the same component as that represented on Figure \ref{pic:graph_for_four}. This component has $19$ vertices (for the non-irreducible vertices are only represented on this picture). Here this component is drawn with some additional information. To each vertex $[\sigma]$ of this quotient Rauzy graph  one associates a vector $A_{[\sigma]} {v}^{\perp}$ with the components labeled by the letters of the alphabet $\mathcal{A}=\{A,B,C,D\}$. It happens that each of the components is either equal to $0,1$ or $1-1$.The non-flipped intervals have their corresponding components equal to $0$ and the flipped intervals have their corresponding components equal to $\pm 1$. The initial permutation\eqref{eq:basic_combinatorial_type_CET_3} has its corresponding vector equal to $v^{\perp}=(1,-1,1,-1)$.}
\label{fig:part_graph_4_connected_component_equivalence}
\end{figure}
\end{landscape}

\section{Appendix: the Rauzy graphs.}
This Appendix includes a few examples of modified Rauzy graphs (or parts of these graphs) that we constructed for this work, in collaboration with Paul Mercat. We put the links for downloading some of them which are too big to be inserted in the paper in the following list.

\begin{center}
\textbf{Rauzy graphs of interval exchange transformations with flips: some examples.}
\end{center}

For the study of triangle tiling billiards and the minimality properties of the associated interval exchange transformations with flips, we studied the full connected component of the permutation $\sigma$, defined by \eqref{eq:basic_combinatorial_type_CET_3}, in the Rauzy graph of $\IETFfour$. In Figure \ref{pic:graph_for_four} we represent a quotient of this connected component. Here are some additional figures:
\begin{itemize}
\item[•] In Figure \ref{fig:part_graph_4_connected_component_equivalence}, we represent this quotient with an additional information on the values of the invariant of the vertex of the graph given by a vector with entries in the set $\{0,1,-1\}$ (see the discussion in Section \ref{sec:one-half}),
\item[•] for downloading the full connected component of $\sigma$, follow the link \url{https://drive.google.com/file/d/14xbZ5a63PetQPGxBcmGfHfMjLkEsN9fn/view?usp=sharing},
\item[•] for the same full connected component of $\sigma$ with the additional information on the combinatorial invarant, follow the link 

\url{https://drive.google.com/file/d/1gbV_QETrQEJy1QaWyqcxM2Ob4HVIphhj/view?usp=sharing}.
\end{itemize}

In order to understand the behaviour of the maps in the family $\CETfour$ and prove the integrability of almost all the maps in the family (Proposition \ref{thm:CETfour}), one needs to study the connected component of the permutation $\sigma$ defined by \eqref{eq:combinatorics_CETfour}. The number of the (irreducible) vertices in this component is equal to $8 222$. For obvious reasons, we do not draw the entire graph here. The corresponding component in the quotient graph is much smaller (it has only $130$ vertices) and can be downloaded here, 
\url{https://drive.google.com/file/d/12i8mg8PjIvykq7daxxYhqQXLaH8Ts5S6/view?usp=sharing}, with the additional combinatorial data associated to it.

\begin{center}
\textbf{Acknowledgements.}
\end{center}
We would like to thank the organizers of the conferences \emph{Teichmüller Space, Polygonal Billiard, Interval Exchanges} at CIRM in February 2017 (where the work on the project started) as well as \emph{Teichmüller Dynamics, Mapping Class Groups and Applications at Institut Fourier} in June 2018\footnote{O.P-R's lecture based on this paper is available at the Youtube channel of Institut Fourier here: \url{https://www.youtube.com/watch?v=I91c-g_BzbM}\href{https://www.youtube.com/watch?v=I91c-g_BzbM}{}} (where the work on the project continued) that provided wonderful environment for research and exchange. We are both very grateful to Diana Davis for her enthousiastic talk in February $2017$ in CIRM that introduced us to tiling billiards as well as for the graphic representation of IET's with flips (as in Figure \ref{fig:fullyflipped}) that we use throughout this article.

The second author benefits from the support of the French government “Investissements d’Avenir” program ANR-11-LABX-0020-01 - Centre Henri Lebesgue \& Région Bretagne - dispositif SAD. During the work on this article she was also supported by the grant L'Oréal-UNESCO for Women in Science $2016$. 

This article wouldn't exist without the help of Paul Mercat who wrote the program that drew modified Rauzy graphs for the maps in $\CETthree$ and $\CETfour$. The proof of Lemma \ref{lemma:main_vector_preserved_lemma} which is crucial for our work, is for now computer assisted. The needed calculations were done by the program that Paul wrote. 

The second author thanks Ilya Schurov for his help on the program for the trajectories in quadrilateral tilings. We would like to thank Pat Hooper and Alexander St Laurent for their program that draws the tilling billiard trajectories, accessible on-line \cite{HSL}, and Shigeki Akiyama for suggesting us a new representation of tiling billiards as the systems of tangent reflections. 

%

\Addresses

\end{document}